\theoremstyle{plain}
\newtheorem{lem}{Lemma}
\newtheorem{thm}[lem]{Theorem}
\newtheorem{prop}[lem]{Proposition}
\newtheorem{cor}[lem]{Corollary}
\theoremstyle{definition}
\newtheorem{defn}[lem]{Definition}
\theoremstyle{remark}
\newtheorem{example}[lem]{Example}
\begin{document}

\title{Likelihood Orders for some Random Walks on the Symmetric Group}
\author{Megan Bernstein}
\begin{abstract}
A random walk converging to its stationary distribution admits an order on its states from most to least likely. Here Fourier analysis and representations of $S_n$ are used to find the order after sufficient time and an upper bound for that time for several random walks on the symmetric group: the transposition walk, three-cycle walk, and $n$-cycle walk. This method can aid in finding the total variation distance and separation distance for a random walk on a group.\end{abstract}

\maketitle

\section{Introduction}

Random walks on the symmetric group have been a testing ground for methods of ascertaining the mixing of Markov chains ever since Diaconis and Shashahani showed cutoff for the random transposition walk ~\cite{DS}.   Two motivating notions of distance between a random walk on a group after $t$ steps and its stationary distribution, $\pi$, are separation distance and total variation distance defined as: \[ \text{sep}(t) = \max_{g \in G} \frac{ \pi(g) - P^{*t}(g)}{\pi(g)}\] \[ ||P^{*t} - \pi||_{TV} = \sum_{g \in G: P^{*t}(g) > \pi(g)} P^{*t}(g) - \pi(g) \] Since random walks on groups have uniform stationary distributions, the separation distance is attained at the least likely element.  The related $l^\infty$ distance is attained at either the most or least likely element.  For total variation, useful bounds, especially lower bounds, frequently originate from understanding the likelihood of the elements relative to the uniform distribution.  It is then natural and of interest to know the most and least likely elements of a random walk as well as the likelihood order.

A short argument from Diaconis and Isaacs ~\cite{DI} shows that at even times the identity element of a group is the most likely element of any symmetric random walk on a group.  The only other known method to determine the relative likelihood of the elements is to develop a likelihood order for all elements of the group.  Diaconis and Isaacs explore several walks, including on the cyclic group, with orders that hold at all times using induction. This paper maintains the need for a complete order but extends to orders that do not hold at all times.

Here Fourier analysis will be used to find likelihood orders that hold after sufficient time for random walks generated by transpositions, $3$-cycles, and $n$-cycles.  The classification of representations as $i$-cycle detectors will yield for these walks the order defining representation.  This leads to several cycle lexicographical likelihood orders. The multiplicity and spectral gap to the next largest character ratio of a representation will generate a heuristic for sufficient time.  

For the transposition walk, this paper partially answers a conjecture of Isaacs and Diaconis that for the lazy transposition walk the $n$-cycles are always the least likely elements.  This method proves it after order $n^2$ steps.  It is trivially true up to $n$ steps. However, the likelihood order, as a whole, breaks frequently for $t<n$.  Whether the $n$-cycles are the least likely and the likelihood order holds in this gap are open.

Section \ref{B} outlines the techniques, discrete Fourier analysis and character theory for the symmetric group. Section \ref{icyc} establishes the $i$-cycle detectors as the representations that determine the likelihood orders. Following that, a brief foray in section \ref{orders} into the cycle lexicographical orders that will appear as likelihood orders and a comparison with other partial orders on partitions. Then for each of the following walks, transposition in section \ref{transp}, three-cycle in section \ref{three}, and $n$-cycle in section \ref{ncycle}, the likelihood order and a bound for sufficient time are found. Following the transposition walk is a extension to lazy walks, section \ref{lazy}, and a method for finding the states more likely than the stationary distribution, section \ref{uniform}. 

\section{Background}\label{B}

\subsection{Notation}

The letters $\lambda, \rho, \gamma$ will always refer to partitions.  $\alpha, \beta, \kappa$ to conjugacy classes of $S_n$, $\alpha =[\alpha_1,...,\alpha_r] = (1^{a_1},...,n^{a_n})$ with cycles lengths $\alpha_1 \geq \cdots \geq \alpha_r$ with $a_i$ $i$-cycles. The following partitions occur repeatedly and will be denoted by $[n-i,i]= \lambda^i$,$[n-i,i-k,1,...,1]=\lambda^{i,k}$, and $[n-i,1,...,1]=\rho^i=\lambda^{i,i-1}$.

Since these random walks are generated by conjugacy classes the probability function is a class function. This means that probabilities are equal within a conjugacy class. Formulas will be written in terms of conjugacy classes referring to probability of an individual element of the conjugacy class.

\subsection{Discrete Fourier Inversion Formula}\label{DFIF}

The Fourier inversion formula gives an expression for the distribution of a random walk on the symmetric group in terms of characters of irreducible representations of the symmetric group.  These irreducibles are indexed by partitions of $n$. See Diaconis ~\cite{Diaconis} for a more thorough treatment.

\begin{prop}\label{fformula}
For a walk starting at the identity with first step $P(\cdot)$ a class function, the $t^{th}$ step is given by
\[ P^{*t}(\alpha) = \frac{1}{n!}\sum_{\lambda} \chi_{\lambda}(\alpha) d_{\lambda} (C)^t \]
Where $C$ is as follows. The sum below is over conjugacy classes $\kappa$ of size $|\kappa|$, \[C = \sum_{\kappa } |\kappa|P(\kappa)\frac{\chi_{\lambda}(\kappa)}{d_\lambda}\] 
\end{prop}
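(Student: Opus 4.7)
The plan is to derive the formula from the standard Fourier inversion formula on a finite group, with Schur's lemma applied to exploit that $P$ is a class function. Throughout let $\rho_\lambda$ denote the irreducible representation with character $\chi_\lambda$ and dimension $d_\lambda$, and define the Fourier transform of any $f \colon S_n \to \mathbb{C}$ by $\hat f(\lambda) = \sum_{g} f(g)\, \rho_\lambda(g)$.

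First I would record the two ingredients I need. (i) The Fourier inversion formula on $S_n$:
\[
f(\alpha) = \frac{1}{n!} \sum_\lambda d_\lambda \operatorname{tr}\bigl(\rho_\lambda(\alpha^{-1}) \hat f(\lambda)\bigr).
\]
(ii) The convolution-to-product rule $\widehat{f * g}(\lambda) = \hat f(\lambda)\,\hat g(\lambda)$, which iterates to $\widehat{P^{*t}}(\lambda) = \hat P(\lambda)^t$. Both are standard and can be cited directly from Diaconis's monograph.

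The heart of the argument is to identify $\hat P(\lambda)$ when $P$ is a class function. Because $P(hgh^{-1}) = P(g)$, one checks that $\hat P(\lambda)$ commutes with every $\rho_\lambda(h)$; by Schur's lemma it is therefore a scalar multiple of the identity on the representation space. To pin down the scalar, take the trace:
\[
d_\lambda \cdot c = \operatorname{tr} \hat P(\lambda) = \sum_{g \in S_n} P(g) \chi_\lambda(g) = \sum_{\kappa} |\kappa|\, P(\kappa)\, \chi_\lambda(\kappa),
\]
so $c = C$ in the notation of the proposition, and hence $\hat P(\lambda) = C \cdot I$. Raising to the $t$-th power, $\hat P(\lambda)^t = C^t \cdot I$.

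Finally, I would substitute back into the inversion formula:
\[
P^{*t}(\alpha) = \frac{1}{n!}\sum_\lambda d_\lambda \operatorname{tr}\bigl(\rho_\lambda(\alpha^{-1})\bigr) C^t = \frac{1}{n!}\sum_\lambda d_\lambda \chi_\lambda(\alpha^{-1}) C^t,
\]
and close by noting that in $S_n$ every element is conjugate to its inverse, so $\chi_\lambda(\alpha^{-1}) = \chi_\lambda(\alpha)$, giving the stated expression. I do not expect a serious obstacle here; the only point that requires care is the Schur's-lemma step and confirming the normalization of the scalar, which the trace computation handles cleanly.
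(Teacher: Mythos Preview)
Your argument is correct and is precisely the standard derivation via Fourier inversion plus Schur's lemma that one finds in Diaconis's monograph; the paper itself does not supply a proof of this proposition but simply cites that reference, so there is nothing further to compare.
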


\begin{prop}\label{bigformula}
\[ P^{*t}(\alpha) - P^{*t}(\beta) = \frac{1}{n!}\sum_{\lambda} \left( \chi_{\lambda}(\alpha) - \chi_{\lambda}(\beta) \right) d_{\lambda} (C)^t \]
Where $C$ is as follows. The sum below is over conjugacy classes $\kappa$ of size $|\kappa|$, \[C = \sum_{\kappa } |\kappa|P(\kappa)\frac{\chi_{\lambda}(\kappa)}{d_\lambda}\] 
\end{prop}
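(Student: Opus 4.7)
The plan is to derive this identity as an immediate consequence of Proposition \ref{fformula}. First I would apply that formula to the conjugacy class $\alpha$, obtaining
\[ P^{*t}(\alpha) = \frac{1}{n!}\sum_{\lambda} \chi_{\lambda}(\alpha) d_{\lambda} (C)^t, \]
and then apply it again to $\beta$, obtaining the analogous expression with $\chi_\lambda(\beta)$ in place of $\chi_\lambda(\alpha)$. Subtracting the two expansions term by term and factoring out the common weight $\tfrac{1}{n!} d_\lambda (C)^t$ from each summand yields exactly the stated identity.

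The one small point that needs to be checked, rather than the ``main obstacle'' this is simply what makes the packaging work, is that the scalar $C$ appearing in both sums is literally the same quantity for fixed $\lambda$. Inspecting its definition $C = \sum_{\kappa} |\kappa| P(\kappa) \chi_\lambda(\kappa)/d_\lambda$, one sees that $C$ depends only on the step distribution $P$ and on the character $\chi_\lambda$ evaluated on the generating classes $\kappa$; it makes no reference to the target class. The dimension $d_\lambda$ likewise depends only on $\lambda$. Thus for each $\lambda$ the only piece of the summand that differs between the two expansions is $\chi_\lambda(\alpha)$ versus $\chi_\lambda(\beta)$, which combines into the factor $\chi_\lambda(\alpha) - \chi_\lambda(\beta)$ in the difference.

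No further argument is required; the proposition is really a reformulation of Proposition \ref{fformula} tailored to the comparisons needed later in the paper. The payoff comes not in the proof but in applications: for the walks considered here many terms $\chi_\lambda(\alpha) - \chi_\lambda(\beta)$ vanish (when $\lambda$ fails to ``detect'' the cycle types of $\alpha$ and $\beta$), and the asymptotic order of $P^{*t}(\alpha) - P^{*t}(\beta)$ as $t$ grows is then controlled by the unique order-defining representation, motivating the $i$-cycle detector analysis in Section \ref{icyc}.
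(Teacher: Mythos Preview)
Your argument is correct and is exactly the intended one: the paper states Proposition~\ref{bigformula} without proof because it is an immediate consequence of Proposition~\ref{fformula} obtained by subtracting the two expansions, precisely as you describe. Your observation that $C$ depends only on $\lambda$ and the step distribution $P$, not on the target class, is the only point worth noting, and you handle it correctly.
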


\subsection{Murnaghan-Nakayama}\label{CT}

\begin{prop}
\[ \chi_{\lambda}(\alpha) = \sum_S (-1)^{\text{ht}(S)} \]
summed over all sequences of partition $S = (\lambda^{(0)}, \lambda^{(1)},...,\lambda^{(r)})$ such that $r = l(\alpha)$, $0 = \lambda^{(0)} \subset \lambda^{(1)} \subset \cdots \subset \lambda^{(r)}$ such that each $\lambda^{(i)} - \lambda^{(i-1)}$ is a border strip of length $\alpha_i$ and $\text{ht}(S) = \sum_i \text{ht}(\lambda^{(i)} - \lambda^{(i-1)})$.
\end{prop}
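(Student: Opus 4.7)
My plan is to deduce the Murnaghan--Nakayama rule from the Frobenius character formula
$$p_\alpha \;=\; \sum_\lambda \chi_\lambda(\alpha)\, s_\lambda,$$
by iterating the case $r=1$. The strategy is to take the product $p_\alpha = p_{\alpha_1}\cdots p_{\alpha_r}$ apart one factor at a time, using the single-factor identity
$$p_k \cdot s_\mu \;=\; \sum_{\xi} (-1)^{\text{ht}(\xi)} \, s_{\mu \cup \xi},$$
where $\xi$ ranges over border strips of size $k$ that can be adjoined to $\mu$ to produce a valid partition. Starting from $s_\emptyset = 1$ and multiplying successively by $p_{\alpha_1}, p_{\alpha_2}, \ldots, p_{\alpha_r}$, each multiplication stacks another border strip of the prescribed size onto the current shape, contributing its height to the total sign. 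The resulting Schur expansion of $p_\alpha$ has coefficient
$$[s_\lambda]\, p_\alpha \;=\; \sum_S (-1)^{\text{ht}(S)},$$
summed precisely over the sequences $S$ of partitions described in the statement. Comparing with Frobenius gives $\chi_\lambda(\alpha)$.

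The base step and the iteration are formal once the single-factor identity is available, so the real work is that $p_k$--Pieri rule. Here I would argue as follows. Use Newton's identity $k h_k = \sum_{i=1}^k p_i\, h_{k-i}$ to express $p_k$ as an alternating sum of products of complete homogeneous symmetric functions, and combine it with the Jacobi--Trudi determinant $s_\mu = \det(h_{\mu_i - i + j})$. Multiplying through by $p_k$ and performing the resulting column operations produces a signed sum of determinants, each of which either vanishes (when two indices collide) or equals a Schur function $s_{\mu \cup \xi}$ indexed by the partition obtained from $\mu$ by adding a ribbon. A careful bookkeeping of which row absorbs the extra boxes shows that the sign is exactly $(-1)^{\text{ht}(\xi)}$, where $\text{ht}(\xi)$ is one less than the number of rows the border strip occupies.

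The main obstacle is precisely this sign accounting in the $p_k$--Pieri step. The Jacobi--Trudi--plus--Newton approach reduces the combinatorics to a purely algebraic manipulation, but matching the geometrically defined statistic $\text{ht}(\xi)$ with the algebraically produced sign requires identifying which ribbons $\xi$ correspond to which nonvanishing determinants and verifying that the row span of $\xi$ is counted correctly. Once this identity is in hand, the induction on $r = \ell(\alpha)$ is routine: at each stage the Frobenius coefficient picks up one more border strip of the prescribed length, and the full rule is reassembled. The final step is simply to equate coefficients of $s_\lambda$ on both sides of $p_\alpha = \sum_\lambda \chi_\lambda(\alpha)\, s_\lambda$, yielding the stated formula.
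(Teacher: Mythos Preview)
The paper does not actually prove this proposition: it appears in the background section (\S\ref{CT}) as a statement of the classical Murnaghan--Nakayama rule, with no proof given. The surrounding discussion only explains the alternative ``insertion'' visualization due to Littlewood and gives a worked example. So there is no paper proof to compare against.

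Your approach is one of the standard ones and is correct in outline. The identity $p_k s_\mu = \sum_\xi (-1)^{\mathrm{ht}(\xi)} s_{\mu\cup\xi}$ is indeed the heart of the matter, and iterating it against $p_\alpha = p_{\alpha_1}\cdots p_{\alpha_r}$ starting from $s_\emptyset$ gives the statement immediately via Frobenius. One small comment: the cleanest way to get the single-factor rule is usually not Newton's identity for $h_k$ but rather the explicit Schur expansion $p_k = \sum_{j=0}^{k-1} (-1)^j s_{(k-j,1^j)}$ together with the skew-Schur interpretation of multiplication, or equivalently the determinantal identity $s_\mu = a_{\mu+\delta}/a_\delta$ combined with the observation that $p_k\, a_{\mu+\delta} = \sum_i a_{\mu+\delta+k e_i}$, after which reordering rows to restore a strictly decreasing sequence either kills the term or produces exactly the height sign. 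That route makes the sign bookkeeping you flag as the main obstacle completely transparent, since the number of row transpositions needed equals $\mathrm{ht}(\xi)$. Your Jacobi--Trudi--plus--Newton sketch can be made to work but is more circuitous for exactly the reason you identify.
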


One way of viewing this process is sucessively removing border strips of length $\alpha_r,..., \alpha_1$ in all possible ways from the bottom, right of $\lambda$.  Alternatively, one can envision this process as in Littlewood, ~\cite{Littlewood}, as successive insertions of $\alpha_1,...,\alpha_r$ into the top,left of $\lambda$.  This reversal of the usual visualization was key in defining an $i$-cycle detector as it emphasizes the importance of the large pieces.  The following are borrowed from Littlewood with some change in terminology.

\begin{defn}
The insertion of $i$ nodes to a partition is called a valid insertion of $i$ nodes if the nodes are added to any row until they are exhausted or until the number of nodes in this row exceeds the number in the preceding row by one, the nodes being then added to the preceding row according to the same rule, and so on until the $i$ nodes are exhausted provided the final product is a valid partition.  If the number of rows involved is even it is called a negative application, if odd, a positive application.
\end{defn}

\begin{prop}
If $\lambda$ is a partition of $n$ and $\alpha$ denotes a conjugacy class of the symmetric group with cycles of orders $\alpha_1,...,\alpha_r$ the $\chi_\lambda(\alpha)$ is obtained form the number of methods of building the partition $\lambda$ by consectutive valid insertions of $\alpha_1,...,\alpha_r$ nodes by subtracting the number of ways which contain an odd number of negative applications from the number of ways which contain an even number of negative applications.
\end{prop}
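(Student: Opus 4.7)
The plan is to exhibit a sign-preserving bijection between Littlewood's valid-insertion sequences and the border-strip chains in the Murnaghan-Nakayama formula stated just above, and then conclude by matching signs term-by-term.

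First I would show that for a partition $\mu$, the valid insertions of $i$ nodes into $\mu$ are in bijection with the border strips of length $i$ addable to $\mu$. In Littlewood's description one starts adding cells in some row $j$, continues in that row until either the $i$ cells are exhausted or row $j$ exceeds row $j-1$ by exactly one, then passes to row $j-1$, and so on. The condition ``exceeds the preceding row by one'' forces the newly added cells in two consecutive rows to overlap in exactly one column, which is precisely the defining feature of a border strip (equivalently: the skew shape $\nu/\mu$ contains no $2\times 2$ square). Conversely, reading any addable border strip from its bottom-right row upward produces a valid insertion sequence in Littlewood's sense. Thus valid insertions of $\alpha_i$ nodes into $\lambda^{(i-1)}$ correspond exactly to the choices of $\lambda^{(i)}$ in the chain of the preceding proposition.

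Next I would match the signs. If a valid insertion occupies $k$ rows, then the resulting border strip occupies $k$ rows, so its height is $k-1$. A negative application means $k$ is even, giving $(-1)^{k-1}=-1$; a positive application means $k$ is odd, giving $(-1)^{k-1}=+1$. Therefore a full sequence of $r$ valid insertions $S$ contributes the sign $(-1)^{N(S)}$, where $N(S)$ is the total number of negative applications, and this equals $(-1)^{\mathrm{ht}(S)}$ for the corresponding chain of partitions.

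Summing over all insertion sequences that build $\lambda$ via cycle sizes $\alpha_1,\ldots,\alpha_r$, the quantity
\[
\#\{S : N(S)\text{ even}\} \;-\; \#\{S : N(S)\text{ odd}\} \;=\; \sum_S (-1)^{\mathrm{ht}(S)},
\]
which equals $\chi_\lambda(\alpha)$ by the preceding Murnaghan--Nakayama proposition. The main obstacle is the combinatorial bijection in the first step: one must carefully verify that Littlewood's indirect rule for when cells may move up to the preceding row matches, cell-by-cell along the rim of $\nu$, the intrinsic definition of a border strip. Once that correspondence is pinned down, the sign bookkeeping and the final summation are essentially immediate.
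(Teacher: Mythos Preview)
Your argument is correct: the bijection between Littlewood's valid insertions and addable border strips is exactly the right observation, and your sign bookkeeping (a $k$-row insertion has height $k-1$, so negative/positive applications contribute $-1/+1$) is accurate. Once the bijection is in place, the summation identity is immediate.

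The paper, however, does not prove this proposition at all. It is quoted as a classical result from Littlewood's book, offered as an alternative ``insertion'' visualization of the Murnaghan--Nakayama rule stated just before it, and then illustrated with the $\chi_{[4,2]}(1^2,4)$ example. So you have supplied a proof where the paper supplies only a citation. Your route---reducing the Littlewood formulation to the standard border-strip formulation via the natural bijection---is precisely how one would justify the equivalence, and it is the argument implicit in the paper's remark that the two descriptions are the same process viewed from opposite ends (removal of $\alpha_r,\ldots,\alpha_1$ versus insertion of $\alpha_1,\ldots,\alpha_r$).
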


An insertion of $\alpha_i$ nodes from a cycle length $\alpha_i$ will be shortened to an insertion of an $\alpha_i$ cycle.

\begin{example}
To see this, consider calculating $\chi_{[4,2]}(1^2,4)$. First to choose which row to start inserting the largest cycle, the four cycle, into the shape $[4,2]$:  
\[\begin{Young} &&&\cr&\cr\end{Young}\]
Either the the first row or the second row are possible giving (the nodes are denumerated by the $i$ of the $\alpha_i$ that fills it):
\[\begin{Young} 1 & 1 & 1  &\cr 1 & \cr \end{Young}\]
\[\begin{Young} 1 & 1 & 1 & 1 \cr & \cr \end{Young}\]
The first is a negative insertion since it covers an even number of rows, while the second confined to the first row is a positive insertion. It remains to place the two $1$-cycles. In the first, the first $1$-cycle can go into either the first or second row to be valid, and the second $1$-cycle must go in the remaining spot.
\[\begin{Young} 1 & 1 & 1  & 2\cr 1 &3 \cr \end{Young}\]
\[\begin{Young} 1 & 1 & 1  & 3\cr 1 &2 \cr \end{Young}\]
While our second way of placing the $4$-cycle leaves only the second row for each $1$-cycle insertion. 
\[\begin{Young} 1 & 1 & 1 & 1 \cr 2 & 3 \cr \end{Young}\]
So this sums to two ways with an odd number of negative insertions and one way with an even number of negative insertions. This gives $\chi_{[4,2]}(1^2,4)= -2 +1=-1$

\end{example}

\subsection{Character Polynomials}
Another useful tool for insight into the characters is the character polynomial, see ~\cite{CP}.  The most well known character polynomial is the one for $[n-1,1]$, that $\chi_{[n-1,1]}(\alpha) = a_1-1$, so that the number of fixed points completely determines this character. 
\begin{defn} 
The character polynomial of $\mu$ a partition of $n$ is \[q_\mu(x_1,...,x_{n}) = \downarrow \left( \sum\limits_{\alpha \dashv n} \frac{\chi_{\mu}(\alpha)}{z_\alpha} \prod\limits_{i=1}^{n} (ix_i - 1)^{a_i} \right) \] where $z_\alpha = \prod_i{a_i! i^{a_i}}$ and $\downarrow(x_1^{a_1} \cdots x_n^{a_n}) = (x_1)_{a_1} \cdots (x_n)_{a_n}$.
\end{defn}

\begin{prop} 
\[\chi_{\lambda}(\alpha) = q_{[\lambda_2,...,\lambda_r]}(a_1,...,a_{n-\lambda_1}) \]
\end{prop}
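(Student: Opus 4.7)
My plan is to derive the identity by expanding the definition of $q_\mu$ and matching it term by term with $\chi_\lambda(\alpha)$ computed via the Littlewood form of Murnaghan--Nakayama from Section \ref{CT}. Write $m = n - \lambda_1 = |\mu|$, so $\lambda$ has first row of length $n-m$ and its remaining rows form $\mu$; since $\mu$ has at most $m$ rows, only $x_1, \ldots, x_m$ (equivalently $a_1, \ldots, a_{n-\lambda_1}$) can appear in the polynomial, explaining the number of arguments on the right.

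First I would partition the Littlewood insertion computation of $\chi_\lambda(\alpha)$ according to which cycles of $\alpha$ eventually lie in the sub-shape $\mu$ and which remain entirely in the long first row. For a choice of a sub-multiset of cycles of cycle type $\beta \vdash m$ destined for $\mu$, the leftover cycles must fit entirely in the first row, each a single-row positive insertion contributing $+1$; the chosen cycles contribute exactly $\chi_\mu(\beta)$ by Murnaghan--Nakayama applied to $\mu$ alone. Summing over $\beta$ and over the multinomial count of ways to pick and order the particular $b_i$ cycles of length $i$ used for $\beta$ from among the $a_i$ available produces a preliminary signed expansion of $\chi_\lambda(\alpha)$ in $\chi_\mu(\beta)$ times explicit combinatorial weights in the $a_i$.

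Second, I would expand $q_\mu(a_1, \ldots, a_m)$ directly from the definition. The operator $\downarrow$ sends $x_i^{c_i}$ to the falling factorial $(x_i)_{c_i}$, so evaluation at $x_i = a_i$ gives $c_i!\binom{a_i}{c_i}$. Expanding $(ix_i - 1)^{b_i}$ by the binomial theorem and collecting, the factorials and powers of $i$ combine with $1/z_\beta = 1/\prod_i b_i!\, i^{b_i}$ to yield exactly the combinatorial weight from the first step, and matching the two expressions closes the identity.

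The hard part will be the sign bookkeeping: the $-1$ inside each factor $(ix_i - 1)^{b_i}$ accounts for ``wrap-around'' insertions that straddle the long first row and $\mu$, contributing a negative sign because the insertion then covers an even number of rows, and I need to check that the binomial expansion of these terms matches exactly the inclusion--exclusion over wrap-arounds. I would calibrate against $q_{[1]} = x_1 - 1$ and $q_{[2]} = \binom{x_1}{2} + x_2 - x_1$ on $\lambda = [n-1,1]$ and $\lambda = [n-2,2]$, then write the general argument by induction on $|\mu|$, peeling off one part of $\mu$ at a time to exploit the recursive structure of Murnaghan--Nakayama. An alternative I would try in parallel is to bypass the combinatorics entirely by deriving the identity from the Frobenius formula $\chi_\lambda(\alpha) = \langle p_\alpha, s_\lambda \rangle$, expanding $s_\lambda$ in power sums via Jacobi--Trudi, and matching generating functions.
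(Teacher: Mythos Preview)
The paper does not actually prove this proposition: it is stated as background, cited from \cite{CP}, and immediately used. So there is no in-paper argument to compare your proposal against.

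On its own merits your outline is a reasonable plan for the standard combinatorial proof, but your ``first step'' is stated too optimistically. You write that after choosing a sub-multiset of cycles of total size $m$ destined for $\mu$, ``the leftover cycles must fit entirely in the first row, each a single-row positive insertion contributing $+1$.'' That is not literally true at the level of Littlewood insertions: valid fillings of $\lambda$ include many in which a single cycle straddles the first row and $\mu$, and these are not captured by any clean split into ``$\mu$-cycles'' and ``row-$1$-cycles.'' You recognize this in your ``hard part'' paragraph, but the resolution is the entire content of the argument, not a bookkeeping detail. Concretely, a wrap-around insertion of an $i$-cycle occupies some cells of row $1$ and a full hook of $\mu$; the sign flips because one extra row is covered, and the effect on the $\mu$-portion is the same as inserting a strictly smaller cycle. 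This is what generates the $-1$ in each factor $(ix_i-1)$: it is not a simple inclusion--exclusion over ``whether a wrap-around happened,'' but a recursive identity relating $\chi_\mu(\beta)$ to characters of $\mu$ at cycle types obtained by shortening one part of $\beta$. Your inductive plan on $|\mu|$ is the right vehicle for this, but you will need to set up the recursion carefully rather than appeal to a generic sign-reversing involution.

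Your alternative via the Frobenius formula and power-sum expansions is in fact the cleaner route and is how most references (including those behind \cite{CP}) establish the identity; if you pursue that, the combinatorics above becomes unnecessary.
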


\begin{example}
 To continue using $\chi_{[4,2]}(1^2,4)$ as an example, find the character polynomial corresponding to $\chi_{[4,2]}(\alpha)$. This corresponds to removing the first row of the partition, so $q_{[2]}$.
\[q_{[2]}(x_1,x_2) = \downarrow \left( \sum\limits_{\alpha \dashv 2} \frac{\chi_{[2]}(\alpha)}{z_\alpha} \prod\limits_{i=1}^{2} (ix_i - 1)^{a_i} \right) \] 
Since $\chi_{[2]}(\cdot)=1$ and the only partitions of $2$ are $[2]$ and $[1]$ with $z_{[2]} = (1!)(2)$,$z_{[1,1]}=(2!)(1^2)=2$, this gives,
\[q_{[2]}(x_1,x_2) = \downarrow \left( \frac{1}{2}(2x_2 - 1) + \frac{1}{2}(x_1 - 1)^{2}\right) = x_2 -\frac{1}{2} + \frac{1}{2} (\downarrow x_1^2) -x_1 + \frac{1}{2} = x_2 + {{x_1 \choose 2}} -x_1 \] 
Applying this to $(1^2,4)$ gives $\chi_{[4,2]}(1^2,4)= 0 + {{2 \choose 2}} - 2 = -1$. 

\end{example}

\section{Detecting Cycle Structure}\label{icyc}

As motivated by the formula for the difference in probabilities, the goal here is to describe, for fixed conjugacy classes $\alpha,\beta$, partitions, $\lambda$, for which we know $\chi_\lambda(\alpha) - \chi_\lambda(\beta) =0$.  Each the character for each partition has a granularity to detect cycle structure up to a size beyond which it is indescriminant.  For example, above it was noted that $\chi_{[n-1,1]}$ is determined by fixed points and it was computed that $\chi_{[n-2,2]}$ sees only fixed points and $2$-cycles. This leads to three equivalent conditions motivated by both Murnaghan-Nakayama and character polynomials characterizing a partition with the potential to see $i$-cycles, to be called an $i$-cycle detector.  In turn, a partition that is not an $i$-cycle detector will not be able to detect if $\alpha,\beta$ only differ in cycle decompositions for cycles $\geq i$.  The definitions below reflect the property that $\chi_\lambda(\alpha) = \text{sgn}(\alpha)\chi_{\lambda'}(\alpha)$, so being an $i$-cycle dector is a dual statement about a partition and its conjugate.

\begin{defn} 
An insertion of cycles lengths $\alpha_1,...,\alpha_k$ into $\lambda$ is trival if they insert with nodes in the same order as from inserting a cycle length $\alpha_1 + ...+ \alpha_k$. Note that this requires $\alpha_1$ to occupy the entire first column of the hook of $\alpha_1+...+\alpha_k$.
\end{defn}

\begin{example}
Recall the examples of insertions above.
\[\begin{Young} 1 & 1 & 1  & 2\cr 1 &3 \cr \end{Young}\]
\[\begin{Young} 1 & 1 & 1  & 3\cr 1 &2 \cr \end{Young}\] 
\[\begin{Young} 1 & 1 & 1 & 1 \cr 2 & 3 \cr \end{Young}\]
As always, the first cycle, $\alpha_1$ inserts trivially in all three examples. Now, looking at the first two cycles insertions, only the first example is trivial since in this case the second cycle was inserted following the first cycle in the same row. The second example fails to be trivial is there is no was to insert one cycle length $\alpha_1+\alpha_2=5$ into the shape $\alpha_1,\alpha_2$ fill, $[3,2]$. And the third fails since the first cycle does not fill the entire first column. The three cycles insert trivially in none of these examples.
\end{example}

\begin{defn} 
Call $\lambda$ a $i$-cycle detector if there is a non-trivial insertion of cycles lengths $\geq i$ into $\lambda$ and $\lambda'$
\end{defn}

Implicit in this definition is that $i$-cycle detectors only exist for $i \leq \frac{n}{2}$, since it is impossible to insert two cycles size $> \frac{n}{2}$ into a partition of $n$.

\begin{lem}\label{ILem}
The following are equivalent:
\begin{enumerate}
\item $\lambda$ is an $i$-cycle detector
\item $h_{2,1},h_{1,2} \geq i$ (where $h_{x,y}$ is the length of the hook starting at $(x,y)$ in $\lambda$)
\item some $x_j$ for $j \geq i$ occurs in a monomial with non-zero coefficient in both $q_{[\lambda_2,...,\lambda_r]}(x),q_{[\lambda'_2,...,\lambda'_{r'}]}(x)$
\end{enumerate}
\end{lem}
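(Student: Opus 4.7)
The plan is to prove (2) $\Leftrightarrow$ (3) and (1) $\Leftrightarrow$ (2), linked by the identity
\[
h_{1,1}([\lambda_2,\lambda_3,\ldots]) = \lambda_2 + (\lambda'_1 - 1) - 1 = h_{2,1}(\lambda),
\]
together with its conjugate $h_{1,1}([\lambda'_2,\ldots]) = h_{1,2}(\lambda)$. Condition (2) is just the assertion that $h_{1,1}$ of both ``tail'' partitions is at least $i$.

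For (2) $\Leftrightarrow$ (3), by the identity above it suffices to show, for any partition $\mu$, that some $x_j$ with $j \geq i$ appears in a monomial of $q_\mu$ with nonzero coefficient if and only if $h_{1,1}(\mu) \geq i$. The forward direction is direct from the definition: a nonzero $x_j$-contribution forces a surviving $\alpha$-term in the defining sum with $a_j \geq 1$, so $\chi_\mu(\alpha) \neq 0$ for some $\alpha \vdash |\mu|$ containing a $j$-cycle; by Murnaghan--Nakayama, $\mu$ must then admit a border strip of length $j$, giving $h_{1,1}(\mu) \geq j \geq i$. For the converse, take $j = h_{1,1}(\mu)$. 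The strict inequalities $h_{x,y}(\mu) < h_{1,1}(\mu)$ for every $(x,y) \neq (1,1)$ imply that the rim hook of length $j$ in $\mu$ is unique, so $\chi_\mu(j, 1^{|\mu|-j}) = \pm d_\nu \neq 0$. Hence $q_\mu$ cannot be independent of $x_j$ (otherwise it would evaluate identically on $\alpha$-tuples differing only in the $j$-cycle count, contradicting this nonvanishing), so some monomial containing $x_j$ has nonzero coefficient.

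For (2) $\Rightarrow$ (1), I construct an explicit non-trivial insertion of an $\alpha \vdash n$ with at least two parts of size $\geq i$. The workhorse is the rim hook attached at position $(2,1)$, of length $h_{2,1}(\lambda) \geq i$; this serves as one cycle of length $\geq i$. For the second cycle I use a border strip of length $\geq i$ from the complement, supplied by the hook at $(1,2)$ of length $h_{1,2}(\lambda) \geq i$ or, when those rims overlap, by the principal hook at $(1,1)$ of length $h_{1,1}(\lambda) \geq h_{2,1}(\lambda) + 1$. Remaining cells are filled with $1$-cycles. Non-triviality follows because either $\lambda$ contains a $2 \times 2$ subshape, so no single $n$-cycle insertion exists and any valid decomposition is vacuously non-trivial, or $\lambda$ is a hook $[a,1^b]$, in which case (2) forces $a \geq i+1$ and $b \geq i$ and the explicit decomposition $\alpha_1 = a$ (row 1), $\alpha_2 = b$ (column tail) diverges from the canonical bottom-up-then-right order.

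For (1) $\Rightarrow$ (2), I argue by contrapositive: if $h_{2,1}(\lambda) < i$, then $[\lambda_2,\lambda_3,\ldots]$ has no border strip of length $\geq i$, so every border strip of length $\geq i$ in $\lambda$ must meet row 1. In any insertion of an $\alpha \vdash n$ with two parts $\alpha_p, \alpha_q \geq i$, the two corresponding border strips therefore both use row-1 cells; the connectedness and no-$2\times 2$ conditions for border strips, together with the requirement that these strips fit consistently as $\alpha_p, \alpha_q$ arise in the insertion sequence, force them to align with the canonical rim-hook order of $\lambda$, making the insertion trivial. The symmetric argument using the conjugate partition handles $h_{1,2}(\lambda) < i$. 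The main obstacle will be making this geometric argument fully rigorous: ruling out exotic non-canonical placements of two large border strips when no such strip can avoid row 1, and handling the case where the natural rim-hook pairings in (2) $\Rightarrow$ (1) overlap and demand a more careful choice of complementary strip.
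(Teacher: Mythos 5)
Your reduction of (2)$\Leftrightarrow$(3) to the identity $h_{1,1}([\lambda_2,\ldots,\lambda_r])=h_{2,1}(\lambda)$ is correct and is essentially the paper's argument: the maximal hook length bounds which $x_j$ can appear, and the unique border strip of length $h_{1,1}(\mu)$ forces a surviving $x_{h_{1,1}}$-contribution (the paper makes your parenthetical rigorous by isolating the $\alpha_1=h_{2,1}$ terms and factoring out $\pm(x_{h_{2,1}}-\tfrac{1}{h_{2,1}})q_{[\lambda_3-1,\ldots,\lambda_r-1]}$, which is nonzero because no character polynomial vanishes identically). That half I would accept.

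The gap is in (1)$\Leftrightarrow$(2), which is the bulk of the paper's proof. First, non-triviality is a property of the sub-shape filled by the cycles of length $\geq i$, not of $\lambda$: an insertion of two large cycles into a $\lambda$ containing a $2\times 2$ block can still be trivial if those two cycles happen to fill a hook-shaped region in the single-long-cycle order, so your ``vacuously non-trivial'' step does not go through. Second, in Littlewood's insertion picture (which is how the paper defines triviality) the first-inserted cycle must fill a hook containing the cell $(1,1)$; the rims of the hooks at $(2,1)$ and $(1,2)$ do not contain $(1,1)$, they generically overlap (already for $\lambda=[n-i,i]$), and falling back to the principal rim does not help since it contains the $(2,1)$-rim. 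So your explicit construction is not a valid insertion sequence as it stands, and the overlap case you defer is the generic one, not an edge case. Third, for (1)$\Rightarrow$(2) the claim that two large border strips meeting row~1 ``are forced to align with the canonical rim-hook order'' is exactly the assertion that needs proof. The paper resolves all three points by classifying the first two rows and columns of $\lambda$ (equivalently, of the region the two large cycles fill) into five shapes and, in each, exhibiting a non-trivial insertion when $h_{2,1},h_{1,2}\geq i$ and bounding both hooks from below when a non-trivial insertion exists; some such case analysis, or a replacement for it, is still needed to complete your argument.
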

\begin{proof}

The equivalence of the first two statements will mostly be a proof by diagram.  The first two rows and columns of $\lambda$ take on one of five shapes where the captial letters stand for any number of boxes, and the lower case a single box.  These capital letters and $h_{1,2},h_{2,1}$ will be used abusively to stand for both the boxes they represent and the number of boxes they represent.

\[ \young(xD,C), \young(xyD,wv,C), \young(xyAzD,wvAu,C), \young(xyD,wv,BB,rs,C), \young(xyAzD,wvAu,BB,rs,C)\]

First to show if $h_{2,1},h_{1,2} \geq i$, then two cycles can be inserted into the first two rows and columns of $\lambda$ with the second inserting non-trivially.  Then it will also clearly work for $\lambda'$ as it has the same property.

For \young(xD,C), $D = h_{1,2} \geq i$, $C = h_{2,1} \geq i$ so inserting the first $i$-cycle into the first row and the second into $C$ is possible.

For \young(xyD,wv,C), $D +2 = h_{1,2} \geq i$, $C+2 = h_{2,1} \geq i$, so inserting the first $i$-cycle into the first row and the second into $h_{2,1}$ is possible.

For  \young(xyAzD,wvAu,C), $h_{1,2} = 3 + A +D \geq i$, $h_{2,1} = 3+ A + C \geq i$, so the first $i$-cycle fits in the first row and the second in the $h_{2,1}$.

For \young(xyD,wv,BB,rs,C), $h_{1,2} = 3 + B +D \geq i$, $h_{2,1} = 3 + B +C \geq i$.  The first column then is the same length as $h_{2,1}$, so insert the first $i$-cycle vertically.  And the second can insert non-trivially into $h_{1,2}$.

Finally, for \young(xyAzD,wvAu,BB,rs,C).  Insert trivially into the first row and column along \young(xyAzD,w,B), which is at least $i$ long since this is the same length as $h_{1,2}$.  Insert non-trivially into the remaining squares, which are the same length as $h_{2,1}$.

If two cycles of lengths at least $i$ can be inserted with one non-trivial into both $\lambda, \lambda'$, it needs to be shown that $h_{1,2},h_{2,1} \geq i$.   These same five shapes also describe the shape these insertions can fill within $\lambda$.  For one shape only, it can only be shown that inserting into $\lambda$ gives $h_{2,1} \geq i$.  The insertion into $\lambda'$ will then satisfy $h_{1,2} \geq i$.  Let the insertion into $\lambda$ be one of the shapes:

For \young(xD,C), since the second insertion is assumed to be non-trivial, it must be contained in $C$.  Therefore, $h_{2,1} \geq C \geq i$. To establish that $h_{1,2} \geq i$ use that $\lambda'$ satisfies the non-trivial requirement as well.

Then, \young(xyD,wv,C).  The non-trivial intersection cannot be in both $C$ and $D$ since this would leave only one square for the trivial insertion.  This means the non-trivial insertion is contained in either $h_{2,1}$ or $h_{1,2}$, leaving the size of the other for the trivial insertion.  So both $h_{2,1}$ and $h_{1,2}$ are at least $i$.

And for \young(xyAzD,wvAu,C).  If the non-trivial insertion intersects either $w,C$ or $z,D$ it cannot intersect the other since this would make that insertion longer than the preceding trivial insertion.  This leaves either at most $D + A + 3$ boxes or $C + A + 3$ boxes for the trivial insertion, each contained in $h_{1,2}$ and $h_{2,1}$.  The remaining space then for a single insertion is at most $C + A + 3$ boxes and $D + A + 3$ boxes.  So both hooks are at least $i$.  If instead the non-trivial insertion did not intersect $w,C,z,D$, it must fit into $v,A,u$ which is smaller than either hook, and both hooks are also at least $i$ in length. 

Then, \young(xyD,wv,BB,rs,C) as the conjugate of the previous shape follows exactly the same argument with relabeling.

Lastly,  \young(xyAzD,wvAu,BB,rs,C).  The non-trivial intersection must contain the hook $s,B,v,A,u$.  It may additionally contain either $z,D$ or $r,C$ but not both, as that would make it longer than the trivial insertion.  First if it contains $r,C$.  This makes the non-trivial in $C + 4 + B + A$ boxes, which is less than the length of $h_{2,1}$.  This leaves $D + 4 + B + A$ boxes the trivial fills, less than the length of $h_{2,1}$.  If the non-trivial fills $z,D$, then similarly it cannot also touch $r,C$, and must be in $D + 4 + B + A$ boxes, less than the length of $h_{1,2}$.  Again, leaving $C + 4 + A + B \leq h_{2,1}$ for the trivial insertion.  In the final case where the trivial is just $s,B,v,A,u$, $ i \leq A + B + 3$ this is less than $h_{2,1} \geq A + B + D + 4$ and $h_{1,2} \geq A + B + C + 4$.

The equivalence of the latter two statements in the theorem follows from expanding the character polynomial.
\[q_{\lambda_2,...,\lambda_r}(x) = \downarrow \left( \sum\limits_{\alpha \dashv n- \lambda_1} \frac{\chi_{\mu}(\alpha)}{z_\alpha} \prod\limits_{i=1}^{n - \lambda_1} (ix_i - 1)^{a_i} \right) \]
The sum is over $\alpha \dashv n- \lambda_1$ but when $\chi_{\mu}(\alpha) = 0$ the $\alpha$ term is $0$.  By Murnaghan-Nakayama, the largest cycle inserts first, and the largest cycle that can insert into $[\lambda_2,...,\lambda_r]$ is $h_{2,1}$.  The sum can then be restricted to $\alpha$ with parts of size at most $h_{2,1}$. An $x_i$ occurs in the $\alpha$ term only when $\alpha$ has a part of size $i$.  So no $x_i$ term can than occur in $q_{[\lambda_2,...,\lambda_r]}$ for $i > h_{2,1}$.  Next, to show $x_{h_{2,1}}$ occurs with non-zero coefficient.  Taking the sum over $\alpha$ with $\alpha_1 = h_{2,1}$,

\begin{align*}&\downarrow \left( \sum_{\alpha\dashv [n- \lambda_1] : \alpha_1 = h_{2,1}, i \neq 1, \alpha_i < h_{2,1}} \frac{\chi_{[\lambda_2,...,\lambda_r]}(\alpha)}{z_\alpha} \prod\limits_{i=1}^{n - \lambda_1} (a_ix_i - 1)^{a_i} \right) \\
&=  \downarrow \left( \sum_{\beta\dashv n- \lambda_1 -h_{2,1}: \beta_i < h_{2,1}} \frac{(-1)^{r+1}\chi_{[\lambda_3-1,...,\lambda_r-1]}(\beta)}{h_{2,1}z_\beta} (h_{2,1}x_{h_{2,1}}-1)\prod\limits_{i=1}^{h_{2,1}-1} (ix_i - 1)^{b_i} \right) \\
&= (-1)^{r+1}(x_{h_{2,1}} -\frac{1}{h_{2,1}}) \downarrow \left( \sum_{\beta\dashv n- \lambda_1 -h_{2,1}} \frac{\chi_{[\lambda_3-1,...,\lambda_r-1]}(\beta)}{z_\beta}\prod\limits_{i=1}^{n- \lambda_1 -h_{2,1}} (ix_i - 1)^{b_i} \right) \\
&=  (-1)^{r+1}(x_{h_{2,1}} -\frac{1}{h_{2,1}})q_{[\lambda_3-1,...,\lambda_r-1]} \end{align*}
No character polynomial can be zero since no character is zero, so the $x_{h_{2,1}}$ will have non-zero coefficient in $q_{[\lambda_2,...,\lambda_r]}$.  Similarly the $x_{h_{1,2}}$ term will have non-zero coefficient in $q_{[\lambda'_2,...,\lambda'_{r'}]}$

\end{proof}
Given a partition, the hook starting at $(2,1)$ will be called its subhook, and its length, $h_{2,1}$ the subhook length of the partition.  When $h_{2,1} \leq h_{1,2}$, the partition is a $i$-cycle detector if its subhook length is at least $i$.  The proof above also shows:

\begin{cor}\label{Icor} For a partition $\lambda$,

\begin{itemize}
\item $h_{2,1}$ is the largest $i$ for which $x_i$ occurs in $q_{[\lambda_2,...,\lambda_r]}(x)$
\item $h_{1,2}$ is the largest $i$ for which $x_i$ occurs in $q_{[\lambda'_2,...,\lambda'_{r'}]}(x)$
\end{itemize}
\end{cor}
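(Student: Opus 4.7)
The plan is to observe that the corollary is essentially a byproduct of the (2) $\Leftrightarrow$ (3) portion of the preceding lemma's proof, so I would extract and package that argument rather than redo anything from scratch. Each bullet is a one-sided statement (''largest $i$'') rather than the two-sided conjunction used in the lemma, which makes the argument cleaner.

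For the first bullet I would proceed in two matching steps. First, to show no $x_i$ with $i>h_{2,1}$ appears in $q_{[\lambda_2,\ldots,\lambda_r]}$: expand the definition of the character polynomial, note that an $x_i$ term can only come from a conjugacy class $\alpha$ of $n-\lambda_1$ with a part of size $i$ and with $\chi_{[\lambda_2,\ldots,\lambda_r]}(\alpha)\neq 0$, and then invoke Murnaghan--Nakayama: since the largest border strip fitting in $[\lambda_2,\ldots,\lambda_r]$ is the principal hook of length $h_{2,1}$, any $\alpha$ with a part of size exceeding $h_{2,1}$ forces the character to vanish. Second, to show $x_{h_{2,1}}$ does occur with nonzero coefficient: reuse the explicit computation from the lemma's proof, which isolates the $\alpha_1=h_{2,1}$ summand, peels off the obligatory first insertion (contributing the sign $(-1)^{r+1}$ and the factor $h_{2,1}x_{h_{2,1}}-1$), and identifies the remaining sum as $q_{[\lambda_3-1,\ldots,\lambda_r-1]}$. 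The coefficient of $x_{h_{2,1}}$ is then $(-1)^{r+1}q_{[\lambda_3-1,\ldots,\lambda_r-1]}$ evaluated as a polynomial, and I just need to know this polynomial is not identically zero.

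The second bullet then follows immediately by applying the first bullet to the conjugate partition $\lambda'$, whose subhook length is $h'_{2,1}(\lambda')=h_{1,2}(\lambda)$.

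The only step that needs even a sentence of care is the nonvanishing of $q_{[\lambda_3-1,\ldots,\lambda_r-1]}$ as a polynomial, which is the step glossed in the lemma as ''no character polynomial can be zero since no character is zero.'' The cleanest justification is the evaluation formula $\chi_{\lambda}(\alpha)=q_{[\lambda_2,\ldots,\lambda_r]}(a_1,\ldots,a_{n-\lambda_1})$: if the character polynomial for some shape were identically zero, every character value for that shape would be zero, contradicting $d_\mu=\chi_\mu(1^{|\mu|})\geq 1$. I expect this to be the only subtle point; everything else is bookkeeping on the lemma's existing argument.
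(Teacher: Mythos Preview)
Your proposal is correct and matches the paper's approach exactly: the corollary is stated immediately after Lemma~\ref{ILem} with only the remark ``The proof above also shows,'' and the argument you outline---vanishing for $i>h_{2,1}$ by Murnaghan--Nakayama, nonvanishing at $i=h_{2,1}$ via the explicit factorization $(-1)^{r+1}(x_{h_{2,1}}-\frac{1}{h_{2,1}})q_{[\lambda_3-1,\ldots,\lambda_r-1]}$, and then conjugation for the second bullet---is precisely what that proof contains. Your added sentence justifying why no character polynomial is identically zero (via $d_\mu=\chi_\mu(1^{|\mu|})\geq 1$) is a welcome clarification of the paper's one-line gloss.
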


\begin{thm}
If $\alpha$, $\beta$ with the same sign have $a_j = b_j$ for all $j<i$ and $\lambda$ is not a $i$-cycle detector, then $\chi_{\lambda}(\alpha) - \chi_\lambda(\beta) = 0$.
\end{thm}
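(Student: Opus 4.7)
The plan is to reduce the statement directly to Corollary \ref{Icor} via the character polynomial representation $\chi_\lambda(\alpha) = q_{[\lambda_2,\ldots,\lambda_r]}(a_1,\ldots,a_{n-\lambda_1})$. Since $\lambda$ fails to be an $i$-cycle detector, Lemma \ref{ILem} gives two cases: either $h_{2,1} < i$ or $h_{1,2} < i$ (possibly both). I will handle each case separately, and the fact that $\alpha$ and $\beta$ agree in all parts of size less than $i$ will translate into the two tuples $(a_1,\ldots,a_{i-1})$ and $(b_1,\ldots,b_{i-1})$ being equal.

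In the case $h_{2,1} < i$, Corollary \ref{Icor} says that the character polynomial $q_{[\lambda_2,\ldots,\lambda_r]}(x)$ uses no variable $x_j$ with $j \geq i$. Hence the polynomial evaluation $\chi_\lambda(\alpha) = q(a_1,\ldots,a_{n-\lambda_1})$ depends only on $a_1,\ldots,a_{i-1}$, and likewise for $\beta$. Equality of the low-index multiplicities then forces $\chi_\lambda(\alpha) = \chi_\lambda(\beta)$, with no appeal to the sign hypothesis.

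The case $h_{1,2} < i$ is where the sign hypothesis on $\alpha,\beta$ enters. Here the previous argument applies to the conjugate partition $\lambda'$: Corollary \ref{Icor} ensures that $q_{[\lambda'_2,\ldots,\lambda'_{r'}]}(x)$ uses no $x_j$ with $j \geq i$, so $\chi_{\lambda'}(\alpha) = \chi_{\lambda'}(\beta)$. Now I invoke the duality $\chi_\lambda(\alpha) = \mathrm{sgn}(\alpha)\chi_{\lambda'}(\alpha)$ and the same identity for $\beta$; because $\mathrm{sgn}(\alpha) = \mathrm{sgn}(\beta)$ by assumption, the two values of $\chi_\lambda$ coincide.

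There is no real obstacle here — the content is already packaged in Corollary \ref{Icor} and the remark preceding the lemma that being an $i$-cycle detector is a self-dual condition. The only care required is in the case split: one must notice that $\lambda$ failing the $i$-cycle detector condition means one of the two hooks is short, not necessarily $h_{2,1}$, which is exactly why the sign hypothesis on $\alpha$ and $\beta$ is needed (and only needed) in the second case.
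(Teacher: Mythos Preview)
Your proof is correct and follows essentially the same route as the paper: use Lemma~\ref{ILem} to reduce to the two cases $h_{2,1}<i$ or $h_{1,2}<i$, invoke Corollary~\ref{Icor} so that the relevant character polynomial depends only on $x_1,\ldots,x_{i-1}$, and in the second case pass through $\chi_{\lambda}(\alpha)=\mathrm{sgn}(\alpha)\chi_{\lambda'}(\alpha)$ using the sign hypothesis. Your observation that the sign condition is needed only in the $h_{1,2}<i$ case matches the paper's argument exactly.
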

\begin{proof}
If $\lambda$ is not an $i$-cycle detector, then one of $h_{2,1},h_{1,2} <i$ by lemma \ref{ILem}.  By corollary \ref{Icor}, this means one of $q_{[\lambda_2,...,\lambda_r]}(x), q_{[\lambda'_2,...,\lambda'_{r'}]}(x)$ has no $x_j$ terms for $j \geq i$.  If it is the character polynomial for $\lambda$, then \[\chi_{\lambda}(\alpha) =q_{[\lambda_2,...,\lambda_r]}(a_1,...,a_{i-1}) = q_{[\lambda_2,...,\lambda_r]}(b_1,...,b_{i-1}) = \chi_{\lambda}(\beta)\] If it is the character polynomial for $\lambda'$, \begin{align*} \chi_{\lambda}(\alpha) &= \text{sgn}(\alpha)\chi_{\lambda'}(\alpha) =\text{sgn}(\alpha)q_{[\lambda'_2,...,\lambda'_{r'}]}(a_1,...,a_{i-1}) \\&= \text{sgn}(\beta)q_{[\lambda'_2,...,\lambda'_{r'}]}(b_1,...,b_{i-1}) = \text{sgn}(\beta)\chi_{\lambda'}(\beta) = \chi_{\lambda}(\beta)\end{align*}
\end{proof}

The stronger statement that any $i$-cycle detector can differentiate between two conjugacy classes that first differ in their number of $i$-cycles is not true.  For instance, the character polynomial associated with $[n-3,2,1]$ has no degree two terms despite being a $2$-cycle detector.  This means, for example, $\chi_{[n-3,2,1]}(1^{n-4},2^2) = \chi_{[n-3,2,1]}(1^{n-4},4)$.  
 
\section{The Cycle Lexicographic Orders and Other Orders on Partitions}\label{orders}

Three total orders on partitions will appear as ordering from most to least likely elements from the three walks in this paper.  All of these are variants on the basic cycle lexicographic order, CL, on conjugacy classes. 

\begin{defn} Let $\alpha=(1^{a_1},2^{a_2},...,n^{a_n}), \beta = (1^{b_1},2^{b_2},...,n^{b_n})$ define $\alpha >_{CL} \beta$ when for $\min_k(a_k \neq b_k) = i$, $a_i > b_i$. $\alpha =_{CL} \beta$ exactly when $\alpha = \beta$.
\end{defn}
Throughout the paper, $i$ will be used to mean this first differing cycle size for any pair $\alpha,\beta$.
CL is distinct from the traditional orders on partitions: majorization/domination/natural, reverse lexicographical, and Lulov's lexicographical. Where lexicographical without the cycle prefix here refers to the $\alpha_i$'s rather than the $a_i$ in the notation $\alpha = [\alpha_1,...,\alpha_r] = (1^{a_1},...,n^{a_n})$.

\begin{defn} Majorization is defined as $\alpha \trianglerighteq \beta$ if for all $i$, $\sum\limits_{j \leq i} \alpha_j \geq \sum\limits_{j \leq i} \beta_j$.   Equivalently, $\alpha \trianglerighteq \beta$ if boxes in the Ferrers diagram of $\beta$ can be moved up and to the right to get the Ferrers diagram of $\alpha$.
\end{defn}

\begin{defn} In reverse lexicographical order $\alpha \geq_{RL} \beta$ if for the minimum $i$ that $\alpha_i \neq \beta_i$, $\alpha_i > \beta_i$. ~\cite{Stanley1}
\end{defn}

This is a refinement of majorization into a total order.

In ~\cite{Lulov} this next definition is called reverse lexicographic, which clashes with the canonical definition used above, ~\cite{Stanley1}, ~\cite{MacDonald}.  It flips the order of the $\lambda_i$.  Lulov mistakenly equated this order to CL.
\begin{defn} In  Lulov's lexicographical order $\alpha \geq_{L} \beta$ if the the maximum $i$ such that $\alpha_i \neq \beta_i$, $\alpha_i < \beta_i$.
\end{defn}

\begin{prop}
CL order is not linear extension of majorization order (and automatically also incompatible with reverse lexicographical).  Cycle lexicographical is also distinct from Lulov's lexicographical.  
\end{prop}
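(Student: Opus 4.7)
The plan is to prove both claims by exhibiting explicit counterexamples; since each is a negative statement, no structural argument is needed. I expect a single pair of partitions of $n = 4$, namely $\alpha = [4]$ and $\beta = [2,2]$, to witness both failures simultaneously, which keeps the proof very short.

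First I would check the majorization comparison directly from the partial sums: $\alpha_1 = 4 \geq 2 = \beta_1$ and $\alpha_1 + \alpha_2 = 4 = 2 + 2$ (padding $\alpha$ by zero), so $\alpha \trianglerighteq \beta$. Equivalently, the Ferrers diagram of $[2,2]$ is obtained from $[4]$ by moving two boxes down, as in the alternative description of majorization given above.

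Next I would translate into cycle type and apply the CL definition. Writing $\alpha = (4^1)$ and $\beta = (2^2)$, one has $a_1 = b_1 = 0$ and $a_2 = 0 < 2 = b_2$, so the first differing index is $i = 2$ with $a_i < b_i$. Hence $\alpha <_{CL} \beta$, which is the reverse of $\alpha \trianglerighteq \beta$ established above. This shows CL is not a linear extension of $\trianglerighteq$; the parenthetical incompatibility with reverse lexicographical is then automatic, since reverse lexicographical refines $\trianglerighteq$.

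Finally, for Lulov's order I would pad $\alpha$ to $[4,0]$ so that both sequences have length $2$, identify the maximum index at which the padded sequences differ as $i = 2$, and observe that $\alpha_2 = 0 < 2 = \beta_2$. By the definition of $\leq_L$, this gives $\alpha \geq_L \beta$, which is the opposite direction from $\alpha <_{CL} \beta$, so CL and $\leq_L$ are distinct as orders on partitions. The only mildly delicate step in the whole argument is fixing the padding convention in Lulov's definition when the two partitions have different numbers of parts; this is a convention to settle rather than a genuine mathematical obstacle, so I do not expect any real difficulty.
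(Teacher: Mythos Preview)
Your argument is correct and in fact cleaner than the paper's. The paper works with triples of partitions of $6$: it takes $[5,1]$, $[4,2]$, $[3,1,1,1]$ and observes that majorization gives $[5,1]\trianglerighteq[4,2]\trianglerighteq[3,1,1,1]$ while CL gives $[3,1,1,1]\geq_{CL}[5,1]\geq_{CL}[4,2]$, an order that is neither the same nor reversed; and similarly uses $[5,1]$, $[2,2,2]$, $[3,1,1,1]$ to separate CL from Lulov's order. Your single pair $[4]$ versus $[2,2]$ already witnesses both failures, since $[4]\trianglerighteq[2,2]$ and $[4]\geq_{RL}[2,2]$ and $[4]\geq_L[2,2]$, but $[4]<_{CL}[2,2]$; this is all the proposition literally asks for. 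The paper's triples do buy something extra---they show that CL matches neither majorization nor its reversal, and neither Lulov's order nor its reversal---but that stronger incompatibility is not part of the stated proposition. Your remark about the padding convention in Lulov's definition is apt; the paper handles this implicitly (its comparison of $[5,1]$ with $[2,2,2]$ and $[3,1,1,1]$ only makes sense under zero-padding), and your choice is consistent with that.
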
 
\begin{proof}
The order under majorization and cycle lexicographical of the following partitions of $6$  are incompatible: $[5,1]$, $[4,2]$, $[3,1,1,1]$.  In majorization order, \[ [5,1] \trianglerighteq [4,2] \trianglerighteq [3,1,1,1] \] as one can see how to move boxes down and to the left to get the next shape \[\yng(5,1) \trianglerighteq \yng(4,2) \trianglerighteq \yng(3,1,1,1) \] while under cycle lexicographic order the order is neither the same or reversed.  In each case, $i=1$ and the partitions are ordered by number of fixed points: \[ [3,1,1,1] \geq_{CL} [5,1] \geq_{CL} [4,2] \]

The order under Lulov's lexicographical and cycle lexicographical of the following partitions of $6$ are incompatible: $[5,1]$, $[2,2,2]$, $[3,1,1,1]$
\[ \yng(5,1),\yng(2,2,2),\yng(3,1,1,1) \]
In Lulov's lexicographic order these are ordered by number of parts, \[[5,1] \geq_{L} [2,2,2] \geq_{L} [3,1,1,1] \] while in cycle lexicographic the order of the three is again determined by number of fixed points: \[ [3,1,1,1] \geq_{CL} [5,1] \geq_{CL} [2,2,2] \]   
\end{proof}

It is also the case that when CL order is taken on the conjugate of $\alpha$ and majorization or Lulov's lexicographical is taken on $\alpha$, there is still incompatibility.  For the first take $[5,1],[4,2],[4,1,1]$ and the second again $[5,1]$,$[2,2,2]$,$[3,1,1,1]$. 

The variants of cycle lexicographical that arise in this paper as likelihood orders are as follows.  Fix $\alpha=(1^{a_1},2^{a_2},...,n^{a_n}), \beta = (1^{b_1},2^{b_2},...,n^{b_n})$.

\begin{defn} Define $\alpha >_{-CL} \beta$ when for $\min_k(a_k \neq b_k) = i$, $a_i < b_i$. $\alpha =_{-CL} \beta$ exactly when $\alpha = \beta$.
\end{defn}

Note this is just the reversal of $\geq_{CL}$.

\begin{defn} Define $\alpha >_{(-1)^{i+1}CL} \beta$ when for $\min_k(a_k \neq b_k) = i$, for $i$ even $a_i < b_i$ or for $i$ odd $a_i > b_i$. $\alpha =_{(-1)^{i+1}CL} \beta$ exactly when $\alpha = \beta$.
\end{defn}
The largest and smallest elements under these orders will be the most and least likely elements of walks.  For reference later, based on the divisibility of $n$ and restricted to permutations with odd sign, $|_{-}$ or even sign, $|_{+}$, the largest, $\hat{1}$, and smallest, $\hat{0}$, elements of each of these orders are:

\begin{prop}\label{CL orders}
When $n$ is odd:

\[\begin{array}{c|ccc} &  \text{CL} & \text{-CL} & (-1)^{i+1}\text{CL} \\ \hline
\hat{1} & (1^n) & (n) & (1^n) \\
\hat{1}_{+} & 1^n) & (n) & (1^n) \\
\hat{1}_{-} & (1^{n-2},2) & (\frac{n-1}{2},\frac{n-1}{2}) & (1^{n-2},2) \\
\hat{0} & (n) & (1^n) & (2^{\frac{n-3}{2}},3) \\
\hat{0}_{+} & (n) & (1^n) & \text{ if } 4|(n+1),   (2^{\frac{n-3}{2}},3) \text{ else, }  (2^{\frac{n-5}{2}},5) \\
\hat{0}_{-} & (\frac{n-1}{2},\frac{n-1}{2}) & (1^{n-2},2) & \text{ if } 4|(n+1),   (2^{\frac{n-5}{2}},5) \text{ else, }  (2^{\frac{n-3}{2}},3) \end{array} \]

When $n$ is even:

\[\begin{array}{c|ccc} & \text{CL} & \text{-CL} & (-1)^{i+1}\text{CL} \\ \hline
\hat{1} & (1^n) & (n) & (1^n) \\
\hat{1}_{+} & (1^n) & (\frac{n}{2}^2)& (1^n) \\
\hat{1}_{-} & (1^{n-2},2^1) & (n)  & (1^{n-2},2) \\
\hat{0} & (n) & (1^n) & (2^{\frac{n}{2}}) \\
\hat{0}_{+} &  (\frac{n}{2}^2)& (1^n) & \text{ if } 4|(n),   (2^{\frac{n}{2}}) \text{ else, }  (2^{\frac{n-4}{2}},4) \\
\hat{0}_{-} & (n) & (1^{n-2},2) & \text{ if } 4|(n),   (2^{\frac{n-4}{2}},4) \text{ else, }  (2^{\frac{n}{2}}) \end{array} \]

\end{prop}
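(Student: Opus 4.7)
The plan is to treat the proposition as a direct case analysis. Each of the three orders is defined by a greedy rule on the exponent vector $(a_1,\ldots,a_n)$ of a partition, so $\hat{1}$ and $\hat{0}$ can be identified by running that rule subject to the constraints $\sum_k k a_k = n$, nonnegativity, and (when needed) the sign condition. Since the sign of a permutation of cycle type $(1^{a_1},\ldots,n^{a_n})$ is $(-1)^{\sum_{k\text{ even}} a_k}$, the sign constraint is simply a parity condition on $a_2+a_4+\cdots$. I would also note at the outset that $-\text{CL}$ is the exact reversal of $\text{CL}$, so its six entries are obtained from the $\text{CL}$ entries by swapping the $\hat{1}$ and $\hat{0}$ rows (keeping the sign subscripts fixed); this halves the work.

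For $\text{CL}$, the unrestricted extremes are immediate: $\hat{1}$ greedily maximizes $a_1$, giving $(1^n)$, and $\hat{0}$ zeros $a_1,a_2,\ldots,a_{n-1}$ in turn, forcing $(n)$. For the sign-constrained entries I would run the same greedy procedure and, at the first index where the unconstrained extremum is inconsistent with the required sign, take the next admissible value. For $\hat{1}_{-}$ the first viable adjustment drops $a_1$ to $n-2$ (since $a_1=n-1$ cannot be completed to a partition), forcing $a_2=1$ and the partition $(1^{n-2},2)$. For $\hat{0}_{\pm}$, the unrestricted minimum $(n)$ has sign $(-1)^{n-1}$, which assigns it directly to $\hat{0}_{+}$ for $n$ odd and to $\hat{0}_{-}$ for $n$ even; the remaining case for each parity requires backing up one step and produces the partition of two nearly equal parts listed in the table.

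For $(-1)^{i+1}\text{CL}$ the greedy rule alternates: for $\hat{1}$, maximize at odd $i$ and minimize at even $i$; for $\hat{0}$, reverse this. Hence $\hat{1}$ is still $(1^n)$, while $\hat{0}$ sets $a_1=0$ and then maximizes $a_2$, giving $(2^{n/2})$ for $n$ even and $(2^{(n-3)/2},3)$ for $n$ odd, because the greedy procedure has no remaining weight after $a_2$ is pushed to its maximum compatible with $a_1=0$. The sign of these candidates is governed by $n/2\bmod 2$ or $(n-3)/2\bmod 2$, which is precisely the divisibility-by-$4$ condition in the table. When the sign is wrong, the minimal greedy correction is to trade two of the $2$-cycles for one $4$-cycle (for $n$ even) or a $2$-cycle together with the $3$-cycle for a $5$-cycle (for $n$ odd), producing the stated alternatives $(2^{(n-4)/2},4)$ and $(2^{(n-5)/2},5)$. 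The $\hat{1}_{-}$ entry falls out the same way: maximize $a_1=n-2$ subject to requiring at least one even cycle, then minimize $a_2=1$, giving $(1^{n-2},2)$.

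The main obstacle is verifying that each sign-forced greedy correction is truly the next admissible partition under the order, rather than one that differs farther down the exponent sequence. Concretely, for each listed entry one must check that no partition of $n$ with a strictly better prefix $(a_1,\ldots,a_i)$ carries the required sign parity. This is a finite bookkeeping check in each case, driven by how much residual weight $n-\sum_{j\le i} j a_j$ is available and how many even cycles of each size can be built from it; the tedium is genuine, but there is no conceptual subtlety beyond what is already used in locating the unrestricted $\hat{1}$ and $\hat{0}$.
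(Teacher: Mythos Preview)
Your proposal is correct and follows essentially the same approach as the paper: both arguments treat the proposition as a greedy case analysis on the exponent vector $(a_1,\ldots,a_n)$, use that $-\text{CL}$ is the reversal of $\text{CL}$, read $(-1)^{i+1}\text{CL}$ as ``reward odd cycles, punish even cycles,'' and handle the sign-restricted extrema by making the minimal admissible correction (e.g.\ trading two $2$-cycles for a $4$-cycle, or a $2$-cycle and a $3$-cycle for a $5$-cycle). The paper's proof is at the same level of brevity you describe for the ``bookkeeping'' verification that each correction really is the next partition in the order, so there is no gap relative to the original.
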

\begin{proof}

First, CL order.  When $\alpha = (1^n)$, $\beta \neq \alpha$, $\min_k(a_k \neq b_k) = 1$ and $a_1 = n > n-2 \geq b_1$.  $(1^n)$ is an even conjugacy class, so it is also the largest even element in CL order.  Among odd conjugacy classes, the transposition $(1^{n-2},2)$ has the most fixed points, and so is similarly first in CL order restricted to odd partitions.  When $\alpha = (n)$, $\beta \neq \alpha$, $i=\min_k(a_k \neq b_k) = \min_k(b_k \neq 0) > n$, so $a_i =0 < 1 \leq b_i$, so the $n$-cycle is last in CL order.  The $n$-cycle can be made with $n-1$ transpositions, and so has the same parity as $n-1$.  $(\frac{n}{2}^2)$ or $(\frac{n-1}{2},\frac{n+1}{2})$ similarly has the second to largest smallest cycle and will be the smallest element in CL order within its parity.  

-CL order is the reverse of CL order, so its largest element is the smallest element of CL order and so on.

Lastly, $(-1)^{i+1}$CL can be thought of as rewarding odd cycles and punishing even cycles.  Again, fixed points are the first deciding point.  When $\alpha = (1^n)$, $\beta \neq \alpha$, $\min_k (a_k \neq b_k) = 1$, and $a_1 > b_1$.  As $1$ is odd, having more odd cycles is good, and the identity is larger than any other element in this order.  Similarly restricted to odd conjugacy classes, the transposition $(1^{n-2},2)$ has more fixed points than any other element, and irrelevant of the $2$ cycle present will be first among odd element in this order. 

When $n$ is even, $\alpha = (2^{\frac{n}{2}})$ is last in this order since for $\beta \neq \alpha$, $\min_k (a_k \neq b_k)$ is either $1$ if $\beta$ has a fixed point or $2$ if not.  If $i=1$, $a_1 = 0 < 1 \leq b_1$, and $1$ is odd so $\beta$ is larger.  If $i=2$, $a_2 = \frac{n}{2} > b_2$, but $2$ is even so having more $2$-cycles makes a conjugacy class less likely under this order.  The next smallest element when $n$ is even is $(2^{\frac{n-4}{2}},4)$ since it is also fixed point free and has the next most $2$ cycles.  One of these two is odd while the other is even, and so they are the respective smallest odd and even elements.  

When $n$ is odd, the smallest element is the fixed point free conjugacy class with the most $2$ cycles, $(2^{\frac{n-3}{2}},3)$, so again in comparison with any other conjugacy class $i$ is $1$ or $2$.  $(2^{\frac{n-5}{2}},5)$ has the next most $2$-cycles amoungst fixed point free conjugacy classes.  Exactly one of these will be odd, the other even.  

\end{proof}

\section{Transposition Walk}\label{transp}

For the first walk, consider building a permutation by at each step appending a randomly selected transposition.  The goal is to find which permutations are more or less likely than others.  The answer for the transposition walk, is that that the likelihood order after sufficient time is given by cycle lexicographic order. The key is finding, given a pair of permutations, the partition indexing the largest character ratio with non-zero character difference in the decomposition given by Proposition \ref{bigformula}.  For the transposition walk, these partitions are $[n-i,i] = \lambda^i$ and $(\lambda^i)'$, where $i$ is as in the definition of cycle lexicographic order.  For $t>4.14n^2 + o(n^2)$ all of the order will hold.  Finally, the methodology will be extended to the lazy version of the walk, and the place in the likelihood order of the stationary distribution will be located.

The order holding after sufficient time is the best that can be hoped for as for all $n \geq 8$, the order breaks for some $t < n$.  For example when $n$ is divisble by $4$, the conjugacy class of an $(n-1)$-cycle is more likely after sufficient time than the conjugacy class of $n/2$ $2$-cycles. Yet, as it requires $n-2$ transpositions to achieve the former and only $n/2$ to achieve the latter, for $n/2 \leq t < n-2$ the order breaks.  All known cases, from simulation, of the order breaking are of this form where the order rights itself when the eventually more likely element is first possible.  There is not a known case of the order breaking for $t \geq n$.

\subsection{Character Ratio Maximizing $i$-Cycle Detector}

From Proposition \ref{bigformula}, the formula for the difference in probability of two conjugacy classes $\alpha,\beta$ is:

\[ P^{*t}(\alpha) - P^{*t}(\beta) = \sum_{\lambda} \left( \chi_{\lambda}(\alpha) - \chi_{\lambda}(\beta) \right) d_\lambda \left( \frac{\chi_{\lambda}(\tau)}{d_\lambda} \right)^t \]\label{tformula}

After sufficient time, the sign of this expression will be determined by the partitions $\lambda$ with the largest magnitude of character ratio, $\left|\frac{\chi_{\lambda}(\tau)}{d_\lambda}\right|$, and non-zero character difference $\chi_{\lambda}(\alpha) - \chi_{\lambda}(\beta)$.  It happens that when $\min_k (a_k \neq b_k) =i$ this lead position is taken by $\lambda^i, (\lambda^i)'$.  The relative sizes of character ratios at a transposition are well understood \cite{Diaconis},\cite{Ingram}:

\begin{prop}\label{tf}
When $\lambda \trianglerighteq \rho$, $\frac{\chi_{\lambda}(\tau)}{d_\lambda} \geq \frac{\chi_{\rho}(\tau)}{d_\rho}$.  Recall, $\lambda \trianglerighteq \rho$ when boxes in the Ferrers diagram of $\rho$ can be moved up and to the right to get the Ferrers diagram of $\lambda$.  The difference in character ratios is the distance the boxes travel divided by ${n \choose 2}$.
\end{prop}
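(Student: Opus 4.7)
The plan is to invoke the classical content-sum formula for the character ratio at a transposition. For a box $(i,j)$ in the Young diagram of $\lambda$, define its content as $c(i,j) = j-i$, and let $c(\lambda) = \sum_{(i,j)\in\lambda}(j-i)$. The key identity, derivable from Murnaghan-Nakayama by a short induction (or more slickly from the fact that the class sum $\sum_{i<j}(i,j)$ acts on the Specht module $S^\lambda$ as the scalar $c(\lambda)$, via Jucys-Murphy elements) is
\[ \frac{\chi_\lambda(\tau)}{d_\lambda} = \frac{c(\lambda)}{\binom{n}{2}}. \]
I would cite Diaconis or Ingram for this formula rather than re-derive it.

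With the formula in hand, the proposition reduces to a purely combinatorial observation about how the content sum changes under the box-movement characterization of majorization. If $\lambda \trianglerighteq \rho$, then $\lambda$ is obtained from $\rho$ by a sequence of elementary moves, each shifting a single box either one row up or one column to the right. Shifting $(i,j) \mapsto (i-1,j)$ changes the content from $j-i$ to $j-i+1$, and shifting $(i,j) \mapsto (i,j+1)$ also changes it from $j-i$ to $j-i+1$. Hence each unit step increases the total content by exactly one, and after a full sequence of moves
\[ c(\lambda) - c(\rho) = \text{total distance the boxes travel}. \]

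Dividing by $\binom{n}{2}$ yields both halves of the proposition simultaneously: the inequality $\chi_\lambda(\tau)/d_\lambda \geq \chi_\rho(\tau)/d_\rho$ (since travel distance is nonnegative), and the exact value of the gap as the total travel distance divided by $\binom{n}{2}$. There is no real obstacle here; the content-sum formula is the one nontrivial input, and the elementary-move analysis is a direct check.
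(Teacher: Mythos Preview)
Your proposal is correct. The paper does not actually supply a proof of this proposition; it simply cites \cite{Diaconis} and \cite{Ingram} for the well-known content-sum formula $\chi_\lambda(\tau)/d_\lambda = c(\lambda)/\binom{n}{2}$ and states the consequence. Your argument is precisely the standard derivation underlying those citations: invoke the content formula, then observe that each unit taxicab step of a box (one row up or one column right) increases its content by exactly one, so the total content change equals the total distance traveled. There is nothing to correct, and your approach matches what the cited references do.
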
 

Recall that an $i$ cycle detector must have $h_{2,1}, h_{1,2} \geq i$. The goal is to find the $i$-cycle detector with largest positive character ratio.  For this walk, its conjugate will have the largest in magnitude character ratio amoung the $i$-cycle detectors with negative character ratios at a transposition.  

\begin{prop}
$\lambda^i = [n-i,i]$ is larger in majorization order than any other $i$-cycle detector, and so it and its conjugate are the $i$-cycle detectors with largest character ratios.
\end{prop}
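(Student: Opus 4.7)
The plan is to reduce the proposition to a one-line partial-sum computation and then invoke Proposition~\ref{tf}. I would fix an arbitrary $i$-cycle detector $\mu \neq \lambda^i$, with $\mu'_1$ denoting its number of rows, and argue the single inequality $\mu_1 \leq n-i$. Expanding the hook length gives $h_{2,1}(\mu) = \mu_2 + \mu'_1 - 2$, and since rows $3, \ldots, \mu'_1$ of $\mu$ each contain at least one box,
\[
n - \mu_1 \;=\; \sum_{j \geq 2} \mu_j \;\geq\; \mu_2 + (\mu'_1 - 2) \;=\; h_{2,1}(\mu) \;\geq\; i,
\]
the last step being the $i$-cycle detector hypothesis. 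So $\mu_1 \leq n - i = \lambda^i_1$. (The case $\mu'_1 \leq 2$ is vacuous in the middle inequality, and $\mu'_1 \geq 2$ is already forced whenever $i \geq 1$.)

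Once this bound is in hand, majorization drops out immediately. The partial sums of $\lambda^i = [n-i,i]$ are $n - i$ at $k=1$ and $n$ for every $k \geq 2$, so $\sum_{j \leq 1} \lambda^i_j = n-i \geq \mu_1$ and $\sum_{j \leq k} \lambda^i_j = n \geq \sum_{j \leq k} \mu_j$ trivially for $k \geq 2$. Therefore $\lambda^i \trianglerighteq \mu$, and Proposition~\ref{tf} then yields $\chi_{\lambda^i}(\tau)/d_{\lambda^i} \geq \chi_\mu(\tau)/d_\mu$ for every $i$-cycle detector $\mu$.

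For the conjugate claim, I would note that the defining conditions $h_{2,1}, h_{1,2} \geq i$ are symmetric under conjugation, so $\mu \mapsto \mu'$ permutes the set of $i$-cycle detectors. Combined with $\chi_{\lambda'}(\tau) = -\chi_\lambda(\tau)$ (since $\tau$ is odd) and $d_{\lambda'} = d_\lambda$, this forces $(\lambda^i)'$ to realize the most negative character ratio in the family. I do not expect any real obstacle here: the entire proof is driven by the single inequality $\mu_1 + h_{2,1}(\mu) \leq n$, and the rest is bookkeeping plus a citation to Proposition~\ref{tf}.
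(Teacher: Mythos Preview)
Your proof is correct and follows essentially the same approach as the paper: both observe that $h_{2,1}\geq i$ forces at least $i$ boxes below the first row (you via the clean chain $n-\mu_1\geq \mu_2+(\mu'_1-2)=h_{2,1}\geq i$, the paper via ``at least $i$ blocks must exist in the subhook''), after which $\lambda^i\trianglerighteq\mu$ is immediate. Your partial-sum verification is arguably tidier than the paper's box-moving and separate incomparability discussion, and you spell out the conjugate claim explicitly, but the underlying argument is the same.
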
   
\begin{proof}
In order for $h_{2,1}$ to be $\geq i$, at least $i$ blocks must exist in the subhook.  The positioning of these most up and to the right is as in $[n-i,i]$. Moreover, for any partition with at least $i$ blocks in its subhook, these $i$ blocks can be moved up and to the right to be in the second row and all other blocks below the first row moved to the first row yielding $[n-i,i]$.  If any additional block is moved up and to the right from $[n-i,i]$, it must be moved out of the subhook making the partition no longer an $i$-cycle detector.  To be incomparible to $[n-i,i]$ in majorization order, a partition must have both more blocks in the first row and some blocks in the third row.  The condition of having more than $n-i$ blocks in the first row precludes it from being an $i$-cycle detector.  Thus, all $i$-cycle detectors are compariable to and below $[n-i,i]$ in majorization order.
\end{proof}

\subsection{Cycle Lexicographical Order}

The sign of \ref{tformula} after sufficient time is determined by the signs of the terms for $\lambda^i$ and $(\lambda^i)'$.  The $\lambda^i$ term will end up controlling the sign, while its conjugate contributes to the condition that only even partitions can be reached at even times and odd transpositions at odd times.

\begin{prop}
Let $\alpha,\beta$ be two conjugacy classes and $\alpha \geq_{CL} \beta$ with $min_k(a_k \neq b_k) = i$, then \[\chi_{\lambda^i}(\alpha) - \chi_{\lambda^i}(\beta) = a_i - b_i \]
\end{prop}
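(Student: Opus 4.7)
The plan is to reduce the claim to a direct calculation of the character polynomial $q_{[i]}$ attached to $\lambda^i = [n-i,i]$. By the character polynomial identity of Section~\ref{B}, stripping off the first row of $\lambda^i$ yields
\[ \chi_{\lambda^i}(\alpha) \;=\; q_{[i]}(a_1,\ldots,a_i), \]
since $[\lambda_2,\ldots,\lambda_r] = [i]$ and $n-\lambda_1 = i$. So the statement reduces to identifying the $x_i$-containing monomials of $q_{[i]}(x_1,\ldots,x_i)$.

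Next I would unfold the definition of $q_{[i]}$. Because $[i]$ is the trivial representation, $\chi_{[i]}(\gamma) = 1$ for every $\gamma \dashv i$, and writing $\gamma = (1^{c_1},\ldots,i^{c_i})$ one gets
\[ q_{[i]}(x_1,\ldots,x_i) \;=\; \downarrow \sum_{\gamma \dashv i} \frac{1}{z_\gamma} \prod_{j=1}^{i} (j x_j - 1)^{c_j}. \]
The variable $x_i$ appears in the $\gamma$-summand only when $c_i \geq 1$; since $\gamma$ is a partition of $i$, this forces $\gamma = (i)$, for which $z_{(i)} = i$ and the product collapses to $ix_i - 1$. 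Thus the unique $x_i$-containing summand is $x_i - 1/i$, linear in $x_i$ and independent of the other variables, so $\downarrow$ leaves it unchanged. This gives
\[ q_{[i]}(x_1,\ldots,x_i) \;=\; x_i + R(x_1,\ldots,x_{i-1}) \]
for some polynomial $R$ in the first $i-1$ variables only.

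Finally, the hypothesis $\min_k (a_k \neq b_k) = i$ means $a_j = b_j$ for every $j < i$, so $R(a_1,\ldots,a_{i-1}) = R(b_1,\ldots,b_{i-1})$ and subtracting yields $\chi_{\lambda^i}(\alpha) - \chi_{\lambda^i}(\beta) = a_i - b_i$. No step is truly hard; the one point requiring a moment of reflection is that $\gamma = (i)$ is the sole source of $x_i$, but this is transparent once the sum is unfolded, since $(jx_j - 1)^{c_j}$ contributes no $x_i$ unless $j = i$ and $c_i \geq 1$. A Murnaghan--Nakayama proof is also possible by counting border strip sequences building $[n-i,i]$ from the cycles of $\alpha$ and $\beta$, but the character polynomial route is shorter because the trivial-character value $\chi_{[i]} \equiv 1$ eliminates all sign bookkeeping.
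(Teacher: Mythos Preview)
Your argument is correct. The paper's primary proof is via Murnaghan--Nakayama: it tracks which insertions into $[n-i,i]$ can place an $i$-or-larger cycle non-trivially into the second row, and observes that all other insertion patterns are mimicked between $\alpha$ and $\beta$ (since their cycles of length $<i$ agree) and hence cancel in the difference. The paper does mention your character-polynomial route in a single closing sentence (``Alternatively, one can compute the characteristic polynomial has a single monomial containing $x_i$, namely $x_i$''), but does not spell it out; you have supplied the details. The character-polynomial approach trades the combinatorial case analysis of insertions for the algebraic observation that $\gamma=(i)$ is the only partition of $i$ with an $i$-part, which is indeed cleaner here because $\chi_{[i]}\equiv 1$ kills all sign bookkeeping. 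Either way, the content is the same: the only $x_i$-dependence in $q_{[i]}$ is the linear term $x_i$.
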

\begin{proof}
Consider the Murnaghan-Nakayama construction of these characters.  If the first cycle inserted is inserted starting in the second row, no $i$ or larger cycle can insert non-trivially.  So all $i$ and larger cycles in both $\alpha$ and $\beta$ would insert as one long cycle without changing the sign of the insertion.  All nontrivial insertions that follow are mimiced between $\alpha$ and $\beta$ since they have the same small cycles.  If the first cycle is inserted instead into the first row, the space remains for a single $i$ cycle to insert non-trivially into the second row, there are respecively $a_i$ and $b_i$ ways to do this.  All further cycles must insert trivially, and the insertion has a positive sign.  Again, if smaller cycles are inserted into the subhook, the insertions for $\alpha$ mimic exactly those for $\beta$ and cancel. Alternatively, one can compute the characteristic polynomial has a single monomial containing $x_i$, namely $x_i$.
\end{proof}

And quickly,

\begin{prop}
\[d_{\lambda^i} = { n \choose i}\frac{n-2i +1}{n-i+1} \] 
\end{prop}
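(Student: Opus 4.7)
The plan is a direct computation using the Frame--Robinson--Thrall hook length formula, $d_\lambda = n! / \prod_{x \in \lambda} h(x)$, applied to the two-row shape $\lambda^i = [n-i, i]$.

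First I would list the hook lengths row by row. In row $2$ the cells $(2,j)$ for $j = 1, \dots, i$ have no cells below them and $i - j$ cells to the right, so their hooks are $i, i-1, \dots, 1$, contributing $i!$ to the product. In row $1$, the first $i$ cells (those with a cell below them) have hook lengths $(n-i) - j + 1 + 1 = n - i - j + 2$ for $j = 1, \dots, i$, which as $j$ runs over $1, \dots, i$ gives the block $n-i+1, n-i, \dots, n-2i+2$; the remaining cells $(1, j)$ for $j = i+1, \dots, n-i$ have no cell below and contribute hooks $n - i - j + 1$, i.e.\ the block $n - 2i, n - 2i - 1, \dots, 1$. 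Note the value $n-2i+1$ is deliberately skipped.

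Next I would assemble these three blocks into a single product,
\[
\prod_{x \in \lambda^i} h(x) \;=\; i! \cdot (n-2i)! \cdot \frac{(n-i+1)!}{(n-2i+1)!} \;=\; \frac{i! \,(n-i+1)!}{n - 2i + 1},
\]
so that
\[
d_{\lambda^i} \;=\; \frac{n!}{\prod h(x)} \;=\; \frac{n!\,(n-2i+1)}{i!\,(n-i+1)!} \;=\; \binom{n}{i}\frac{n-2i+1}{n-i+1},
\]
after rewriting $\tfrac{n!}{i!(n-i+1)!}$ as $\tfrac{1}{n-i+1}\binom{n}{i}$.

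There is no real obstacle here; the only thing to be careful about is bookkeeping the hook lengths in row $1$ across the break at column $i$, since the cells with and without a leg give two disjoint arithmetic runs whose combined length is $n - 2i$ instead of $n - 2i + 1$ -- this is precisely the source of the $n - 2i + 1$ factor in the numerator.
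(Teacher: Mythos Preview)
Your proof is correct and follows essentially the same approach as the paper: both apply the hook length formula to $[n-i,i]$, list the row-$2$ hooks as $i!$ and the row-$1$ hooks as the two runs $(n-i+1)\cdots(n-2i+2)$ and $(n-2i)\cdots 1$ with the value $n-2i+1$ skipped, and then simplify. The only difference is cosmetic --- you organize the bookkeeping a bit more explicitly.
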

\begin{proof}
By the hook length formula ~\cite{Stanley1}, \[ d_{\lambda^i} =  \frac{n!}{i!(n-i+1) \cdots (n-2i +2)(n-2i) \cdots 1} = \frac{n!}{i! (n-i)!}\frac{n-2i+1}{n-i+1} \] 
\end{proof}

\begin{prop}
\[\frac{\chi_{\lambda^i}(\tau)}{d_{\lambda^i}} = 1 - \frac{i(n-i+1)}{{n \choose 2}} >0 \] 
\end{prop}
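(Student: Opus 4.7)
The plan is to reduce the claim to the classical content-sum identity
\[
\frac{\chi_{\lambda}(\tau)}{d_{\lambda}} \;=\; \frac{1}{{n \choose 2}}\sum_{(r,c)\in\lambda}(c-r),
\]
a standard consequence of the Murnaghan--Nakayama rule applied to a single 2-cycle (equivalently, the scalar by which the sum of Jucys--Murphy elements acts on the Specht module $S^{\lambda}$). Once that reduction is in place, the rest is a short algebraic calculation together with a quadratic inequality.

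First I would sum the contents of $\lambda^{i}=[n-i,i]$ row by row. The first row supplies contents $0,1,\ldots,n-i-1$, contributing ${n-i\choose 2}$, while the second row supplies $-1,0,1,\ldots,i-2$, contributing ${i-1\choose 2}-1$. Hence
\[
\frac{\chi_{\lambda^{i}}(\tau)}{d_{\lambda^{i}}} \;=\; \frac{{n-i\choose 2}+{i-1\choose 2}-1}{{n\choose 2}}.
\]

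Next I would match this to the claimed closed form by verifying that the numerator equals ${n\choose 2}-i(n-i+1)$. Doubling both candidates and expanding reduces the identity to $(n-i)(n-i-1)+(i-1)(i-2)-2=n(n-1)-2i(n-i+1)$, and both sides collapse to $n^{2}-2in+2i^{2}-n-2i$. This step is pure bookkeeping.

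For the strict positivity claim I would rewrite $1-\frac{i(n-i+1)}{{n\choose 2}}>0$ as the quadratic inequality $n^{2}-(2i+1)n+2i(i-1)>0$ in $n$. Because $\lambda^{i}$ is a valid partition, $i\leq n/2$, so $n\geq 2i$; the discriminant $-4i^{2}+12i+1$ is negative for $i\geq 4$, which settles all large cases, and the finitely many cases $i\in\{1,2,3\}$ are handled by evaluating the quadratic at $n=2i$ and $n=2i+1$ (giving $2i(i-2)$ and $2i(i-1)$ respectively) and using monotonicity. The only real obstruction is the degenerate boundary $\lambda^{n/2}=[n/2,n/2]$ with $n=4$, where the character ratio vanishes rather than being strictly positive; since the paper works in an asymptotic regime in $n$ with $\lambda^{i}$ a genuine $i$-cycle detector, this single collision lies outside the range of interest and can be noted as an exclusion.
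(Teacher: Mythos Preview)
Your derivation of the formula is correct and rests on the same content identity the paper uses; the only difference is packaging. The paper invokes Proposition~\ref{tf} and computes the character-ratio drop from $\lambda^{0}=[n]$ to $\lambda^{i}$ as the total ``distance'' the $i$ second-row boxes travel to reach the first row (each moves $n-i+1$ in content), while you sum the contents of $\lambda^{i}$ directly and then simplify. These are two phrasings of the same calculation.

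On positivity you actually do more than the paper, which asserts $>0$ but only proves the closed form. One small correction to your boundary analysis: the failures of strict positivity are not confined to $(n,i)=(4,2)$. Your own evaluations give $2i(i-2)$ at $n=2i$ and $2i(i-1)$ at $n=2i+1$, so for $i=1$ the inequality is negative at $n=2$ and zero at $n=3$ as well. The clean statement is that strict positivity holds for $i=1$ once $n\ge 4$, for $i=2$ once $n\ge 5$, and for all $i\ge 3$ whenever $n\ge 2i$; all of this sits comfortably inside the paper's standing large-$n$ regime.
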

\begin{proof}
Using the formula for the difference in character ratios in Proposition \ref{tf}, the difference between the character ratios of $\lambda^0$ (with character ratio $1$) and $\lambda^i$ is the distance in the Ferrers diagram the squares travel up and to the left from $\lambda^i$ to become $\lambda^0$, all divided by n choose $2$.  In this case, the $i$ blocks need to travel from the second row to the first row, for an average distance traveled of $(n-i+1)$.    Hence, \[\frac{\chi_{\lambda^i}(\tau)}{d_{\lambda^i}} = \frac{\chi_{\lambda^0}(\tau)}{d_{\lambda^0}} - \frac{i(n-i+1)}{{n \choose 2}} = 1 - \frac{i(n-i+1)}{{n \choose 2}} \]
\end{proof}

\begin{prop}
The sign of the sum of the $\lambda^i$ and $(\lambda^i)'$ terms in \ref{tformula} is positive when $t$ is the same sign as $\alpha,\beta$ and $\alpha \geq_{CL} \beta$ with $min_k(a_k \neq b_k) = i$. Thus, after sufficient time, CL is the likelihood order.
\end{prop}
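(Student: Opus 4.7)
The plan is to evaluate the paired contribution of $\lambda^i$ and $(\lambda^i)'$ to formula \ref{tformula} explicitly, using the already-established identities, and then argue that after sufficient time this pair dominates the other terms because by the detector theorem all non $i$-cycle detectors contribute zero and by majorization $\lambda^i$ maximizes the character ratio among the rest.

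First I would write out the $\lambda^i$ term, which by the preceding propositions equals
\[
(a_i - b_i)\, d_{\lambda^i}\left(\frac{\chi_{\lambda^i}(\tau)}{d_{\lambda^i}}\right)^{t}.
\]
For the conjugate term I would use $\chi_{(\lambda^i)'}(\gamma)=\text{sgn}(\gamma)\chi_{\lambda^i}(\gamma)$, $d_{(\lambda^i)'} = d_{\lambda^i}$, and $\chi_{(\lambda^i)'}(\tau)/d_{(\lambda^i)'} = -\chi_{\lambda^i}(\tau)/d_{\lambda^i}$ (since $\tau$ is odd). Because $\alpha$ and $\beta$ are reached simultaneously by the walk they must share a sign, so
\[
\chi_{(\lambda^i)'}(\alpha) - \chi_{(\lambda^i)'}(\beta) = \text{sgn}(\alpha)\,(a_i - b_i).
\]
Summing the two contributions gives
\[
(a_i - b_i)\, d_{\lambda^i}\left(\frac{\chi_{\lambda^i}(\tau)}{d_{\lambda^i}}\right)^{t}\bigl(1 + \text{sgn}(\alpha)(-1)^{t}\bigr).
\]
The parenthesized factor is $2$ exactly when $t$ has the same parity as the sign of $\alpha,\beta$ (and $0$ otherwise, which is why the walk is supported only on one parity at each time). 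Under the hypothesis $a_i > b_i$ and using that $\chi_{\lambda^i}(\tau)/d_{\lambda^i}>0$ and $d_{\lambda^i}>0$ from the two preceding propositions, the pair contributes a strictly positive term $2(a_i-b_i)d_{\lambda^i}(\chi_{\lambda^i}(\tau)/d_{\lambda^i})^{t}$.

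For the likelihood-order claim, I would invoke the classification of $i$-cycle detectors: since $a_j = b_j$ for all $j < i$, any $\lambda$ that fails to be an $i$-cycle detector makes $\chi_\lambda(\alpha)-\chi_\lambda(\beta)=0$ and drops out of the sum. Among the remaining $\lambda$, Proposition \ref{tf} together with the majorization argument establishing that $\lambda^i$ is majorization-maximal among $i$-cycle detectors shows that $\lambda^i$ and $(\lambda^i)'$ strictly maximize $|\chi_\lambda(\tau)/d_\lambda|$. Thus there is a spectral gap $\delta > 0$ between $\chi_{\lambda^i}(\tau)/d_{\lambda^i}$ and the next largest absolute character ratio over surviving $\lambda$, and bounding the other terms crudely by $\sum_\lambda 2\max|\chi|\cdot d_\lambda \le 2n!$ times the subdominant ratio to the $t$, we get that for $t$ large the pair computed above dominates, so $P^{*t}(\alpha)-P^{*t}(\beta)>0$.

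The main obstacle is not the sign calculation, which is clean, but the dominance step: one has to be a little careful that the "next largest" character ratio really comes from the next $i$-cycle detector in majorization order and that no cancellation among subdominant terms can resurrect them, and to convert this into the quantitative $t>4.14n^2+o(n^2)$ bound mentioned earlier requires tracking $d_\lambda$ growth against the gap $\delta \asymp 1/n$. For the present proposition, however, the qualitative "after sufficient time" suffices, so the argument closes once the pair is shown to dominate for $t$ large.
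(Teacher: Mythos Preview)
Your proof is correct and follows essentially the same route as the paper: you compute the $\lambda^i$ and $(\lambda^i)'$ contributions using $\chi_{\lambda'}(\cdot)=\text{sgn}(\cdot)\chi_{\lambda}(\cdot)$, $d_{\lambda'}=d_\lambda$, and the sign flip of the character ratio at $\tau$, combine them into $(a_i-b_i)(1+\text{sgn}(\alpha)(-1)^t)d_{\lambda^i}(\chi_{\lambda^i}(\tau)/d_{\lambda^i})^t$, and read off positivity from $a_i>b_i$, $d_{\lambda^i}>0$, and $\chi_{\lambda^i}(\tau)/d_{\lambda^i}>0$. The paper's proof stops there; your added qualitative dominance sketch (detector theorem kills non-detectors, majorization gives a strict spectral gap, crude $2n!$ bound on the rest) is a reasonable justification of the ``after sufficient time'' clause, which the paper defers to the next subsection for quantitative treatment. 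One small inaccuracy in your obstacle remark: the relevant gap to the nearest surviving ratio can be as small as order $1/n^2$ (from the $\lambda^{i,k}$ partitions), not $1/n$, which is precisely why the eventual bound is $O(n^2)$ rather than $O(n\log n)$.
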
  
\begin{proof}
\begin{align*} &\left( \chi_{\lambda^i}(\alpha) - \chi_{\lambda^i}(\beta) \right) d_{\lambda^i} \left( \frac{\chi_{\lambda^i}(\tau)}{d_{\lambda^i}} \right)^t + \left( \chi_{(\lambda^i)'}(\alpha) - \chi_{(\lambda^i)'}(\beta) \right) d_{(\lambda^i)'} \left( \frac{\chi_{(\lambda^i)'}(\tau)}{d_{(\lambda^i)'}} \right)^t \\
&= (a_i - b_i) d_{\lambda^i} \left( \frac{\chi_{\lambda^i}(\tau)}{d_{\lambda^i}} \right)^t + \text{sgn}(\alpha)(a_i - b_i)d_{\lambda^i}\left( -\frac{\chi_{\lambda^i}(\tau)}{d_{\lambda^i}} \right)^t \\
&= (a_i-b_i)(1 + \text{sgn}(\alpha)(-1)^t)d_{\lambda^i} \left( \frac{\chi_{\lambda^i}(\tau)}{d_{\lambda^i}} \right)^t 
\end{align*}

Which is positive when $a_i > b_i$ and $\alpha, \beta$ are the same partity as $t$.
\end{proof}

\begin{prop}
After sufficient time the following hold.  The identity is the most likely element at even times, a transposition the most likely at odd times.  When $n$ is odd, an $n$-cycle is the least likely element at even times and an element of $(\frac{n-1}{2},\frac{n+1}{2})$ at odd times.  When $n$ is even, at even times an element of $(\frac{n}{2}^2)$ is least likely, while at odd times an $n$-cycle is least likely. 
\end{prop}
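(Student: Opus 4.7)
The plan is to combine the previous proposition with Proposition \ref{CL orders}. The previous proposition established that, after sufficient time, the likelihood order on conjugacy classes equals the cycle lexicographic order, subject to the parity constraint that only classes of sign $(-1)^t$ carry positive probability at time $t$ (since the walk is a product of $t$ transpositions started from the identity). So identifying the most and least likely element at time $t$ reduces to reading off $\hat{1}_\epsilon$ and $\hat{0}_\epsilon$ from Proposition \ref{CL orders}, where $\epsilon \in \{+,-\}$ records the parity of $t$.

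First I would verify the parity matching. A permutation of cycle type $(1^{a_1},\dots,n^{a_n})$ has sign $(-1)^{n - \sum_i a_i}$, so: the identity $(1^n)$ lies in $S_n^+$ and a transposition $(1^{n-2},2)$ in $S_n^-$; an $n$-cycle has sign $(-1)^{n-1}$; the class $(\tfrac{n-1}{2},\tfrac{n+1}{2})$ for $n$ odd has sign $(-1)^{n-2} = -1$; and $(\tfrac{n}{2},\tfrac{n}{2})$ for $n$ even has sign $(-1)^{n-2} = +1$. Each parity matches the $\epsilon$-column from which we extract the corresponding entry of the table.

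The four claims then follow immediately by lookup. The top rows of both tables give $\hat{1}_+ = (1^n)$, so the identity is most likely at even $t$, and $\hat{1}_- = (1^{n-2},2)$, so a transposition is most likely at odd $t$, regardless of the parity of $n$. For the least likely entries, the bottom rows give: for $n$ odd, $\hat{0}_+ = (n)$ (an $n$-cycle at even $t$) and $\hat{0}_- = (\tfrac{n-1}{2},\tfrac{n+1}{2})$ (at odd $t$); for $n$ even, $\hat{0}_+ = (\tfrac{n}{2},\tfrac{n}{2})$ (at even $t$) and $\hat{0}_- = (n)$ (an $n$-cycle at odd $t$).

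There is no substantive obstacle; the entire content of the proposition is already captured by the two inputs, and the argument is a parity check together with a table lookup. The only point requiring any care is the cross-check that the sign-parity computed above for each extremal cycle type genuinely agrees with the subscript under which Proposition \ref{CL orders} lists it, which is the verification performed in the second paragraph.
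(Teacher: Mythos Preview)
Your proof is correct and takes essentially the same approach as the paper: both reduce the proposition to reading off the extremal elements of CL order under the appropriate parity constraint from Proposition~\ref{CL orders}. Your version is slightly more explicit in verifying the sign-parity match for each extremal class, which the paper leaves implicit.
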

\begin{proof}
These are the largest and smallest elements of CL order under the required parity constrainsts, as in Proposition \ref{CL orders}.
\end{proof}

\subsection{Bound for Sufficient Time}

The general methodology will be to show that the $\lambda^i$ term is greater than the sum of the absolute value of all other terms for partitions $\lambda$ with $i \leq h_{2,1} \leq h_{1,2}$.  The symmetry between conjugates handles the remaining partitions.  The $\lambda^i$ term will be shown to be at least a scaling factor, $c_j$, times greater than all the $\lambda$ terms with $\lambda_1 = n-j$.  The expression to be shown is then is:

\[ c_j \left| \frac{\chi_{\lambda^i}(\alpha) - \chi_{\lambda^i}(\beta)}{{\max_{\lambda: \lambda_1=n-j} \chi_{\lambda}(\alpha) - \chi_{\lambda}(\beta)}} \right| \frac{d_{\lambda^i}}{\sum_{\lambda: \lambda_1=n-j} d_{\lambda}} \left| \frac{\frac{\chi_{\lambda^i}(\tau)}{d_{\lambda^i}}}{\max_{\lambda: \lambda_1=n-j} \frac{\chi_\lambda(\tau)}{d_\lambda}} \right|^t >1 \]

Next, at length, each of the above will be bounded.  First, the case for $i \leq \frac{n}{3}$ will be treated seperately than $i > \frac{n}{3}$.  Within $i \leq \frac{n}{3}$, the partitions $[n-i,i-k,1,...,1] = \lambda^{i,k}$ will be handled as a special case, as their character ratios are the closest to $\lambda^i$'s.  The remaining partitions will be handled based on the length of their first row.  For $i > \frac{n}{3}$ the closest character ratio to $\lambda^i$ is that of $\lambda^{i+1}$, and a different approach will handle this.  

These the representations with character ratios closest to that of $\lambda^i$ determine the upper bound. This gives the heuristic of $t \geq \max_{\lambda} \left(\frac{\chi_{\lambda^i}(\tau)}{d_{\lambda^i}}- \frac{\chi_\lambda(\tau)}{d_\lambda}\right)^{-1} \log(d_{\lambda}/d_{\lambda^i})$.

This is analogous to the heuristic estimate $\hat{\tau_n}$ of the mixing time of a random walk with sufficient symmetry on a group based on its spectral gap, $1- \lambda_2$ (where $1=\lambda_1,\lambda_2,...$ are the eigenvalues), and multiplicity of the second eigenvalue, $\delta_n$, ~\cite{AldousDiaconis}.  \[\hat{\tau_n} = \tau_e(n)\log(\delta_n)  \text{ where } \tau_e(n) = -1/\log(\lambda_2) \]  In these cases, the closest character will have a larger dimension than $\lambda^i$ by a factor between a $O(i)$ and $n$.  While the minimal character ratio difference, playing the part of the spectral gap, varies inversely at the same time between $O(\frac{O(i)}{n^2})$ and $O(\frac{1}{n})$  This analog of the heuristic describes all the bounds achieved for all the walks in this paper. 

\begin{prop}\label{ik}
The $\lambda^{i,k}$ term is:
\begin{align*} &\left( \chi_{\lambda^{i,k}}(\alpha) - \chi_{\lambda^{i,k}}(\beta) \right) d_{\lambda^{i,k}} \left( \frac{\chi_{\lambda^{i,k}}(\tau)}{d_{\lambda^{i,k}}} \right)^t \\& = (-1)^{k}(a_i - b_i){n \choose i}{i-1 \choose k} \frac{n-2i+k+1}{n-i+k+1} \left( 1 - \frac{i(n-i+k+1)}{{n \choose 2}} \right)^t \end{align*}
\end{prop}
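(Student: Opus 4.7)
The plan is to establish the three factors appearing in the displayed expression --- the character difference, the dimension, and the transposition character ratio --- separately, then multiply them together.

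For the character difference, I would use the character polynomial formalism of Section~2.4: by the stated formula, $\chi_{\lambda^{i,k}}(\alpha) = q_{[i-k,1^k]}(a_1,\ldots,a_i)$. Since $[i-k,1^k]$ has size exactly $i$, the only conjugacy class $\alpha \dashv i$ with a part of length $i$ is $(i)$ itself, with $a_i = 1$ and other $a_j = 0$; so in the defining sum for $q_{[i-k,1^k]}$, the variable $x_i$ can appear only from the $(i)$-term, where it appears to degree $1$ with no other variables alongside. Tracing this single contribution through the $\downarrow$ operation shows the coefficient of $x_i$ in $q_{[i-k,1^k]}$ equals $\chi_{[i-k,1^k]}((i))$, which by Murnaghan--Nakayama is $(-1)^k$, since $[i-k,1^k]$ is exhausted by a single border strip spanning $k+1$ rows (height $k$). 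Writing $q_{[i-k,1^k]} = P(x_1,\ldots,x_{i-1}) + (-1)^k x_i$ and using the hypothesis $a_j = b_j$ for $j<i$ produces $\chi_{\lambda^{i,k}}(\alpha)-\chi_{\lambda^{i,k}}(\beta) = (-1)^k(a_i - b_i)$.

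For the dimension, I would apply the hook length formula to $\lambda^{i,k} = [n-i,i-k,1^k]$ directly. The hook lengths split into six natural groups: the corner $(1,1)$ has hook $n-i+k+1$; the row-$1$ segment $(1,2),\ldots,(1,i-k)$ contributes hook lengths $n-i, n-i-1,\ldots,n-2i+k+2$; the row-$1$ tail $(1,i-k+1),\ldots,(1,n-i)$ contributes $(n-2i+k)!$; the cell $(2,1)$ contributes the subhook length $i$; the rest of row $2$ contributes $(i-k-1)!$; and the column-$1$ tail contributes $k!$. The two row-$1$ pieces combine as $(n-i)!/(n-2i+k+1)$, and after dividing $n!$ by the total product and rearranging factorials one arrives at $\binom{n}{i}\binom{i-1}{k}\frac{n-2i+k+1}{n-i+k+1}$.

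For the transposition character ratio, I would invoke Proposition~\ref{tf}, which identifies $\chi_\lambda(\tau)/d_\lambda$ with $\binom{n}{2}^{-1}$ times the sum of contents $s - r$ over cells $(r,s) \in \lambda$. Summing region-by-region over $\lambda^{i,k}$ --- row~$1$ gives $\binom{n-i}{2}$, row~$2$ gives $\sum_{s=1}^{i-k}(s-2) = (i-k)(i-k-3)/2$, and the column-$1$ tail gives $\sum_{r=3}^{k+2}(1-r)$ --- and collapsing the resulting quadratic in $n,i,k$ produces $\binom{n}{2} - i(n-i+k+1)$, yielding the ratio $1 - i(n-i+k+1)/\binom{n}{2}$. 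The main obstacle throughout is the algebraic bookkeeping: one must carefully track polynomial expressions in $n,i,k$ in both the hook-length product and the content sum. Multiplying the three closed-form factors recovers the stated formula.
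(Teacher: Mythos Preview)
Your proposal is correct and computes all three factors accurately. The overall structure---compute each factor separately and multiply---matches the paper, but two of the three computations take a different route.

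For the character difference, the paper argues directly via Murnaghan--Nakayama insertions: the only insertions into $\lambda^{i,k}$ that do not cancel between $\alpha$ and $\beta$ are those placing an $i$-cycle non-trivially into the subhook, which can happen in exactly one way with sign $(-1)^k$. You instead go through the character polynomial $q_{[i-k,1^k]}$, isolating the $x_i$-coefficient. Your argument is more formal and makes the dependence on only $x_1,\ldots,x_i$ explicit; the paper's is shorter and more geometric.

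For the transposition character ratio, the paper uses Proposition~\ref{tf} in its ``box-moving'' form: starting from the known ratio for $\lambda^i$, moving $k$ boxes from row~$2$ down into column~$1$ moves each a distance exactly $i$, giving the ratio immediately as $\bigl(1-\tfrac{i(n-i+1)}{\binom{n}{2}}\bigr)-\tfrac{ik}{\binom{n}{2}}$. You instead compute the full content sum of $\lambda^{i,k}$ from scratch. Both arrive at the same answer, but the paper's approach is quicker since it leverages the already-established ratio for $\lambda^i$, while yours is self-contained and does not depend on that earlier proposition.

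The dimension computation via hook lengths is essentially identical in both.
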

\begin{proof}
From the Ferrers diagram of $\lambda^i$, to get to the Ferrers diagram of $\lambda^{i,k}$ $k$ boxes must be moved each a distance of $i$.  This gives the character ratio.  The hook length formula here gives \begin{align*}d_{\lambda^{i,k}} &= \frac{n!}{(n-i+k+1)(n-i) \cdots (n-2i + k+2)(n-2i +k) \cdots (1)(i)(i-k-1)!k!}\\ &= {n \choose i}{i-1 \choose k} \frac{n-2i+k+1}{n-i+k+1}\end{align*} And finally for the character difference, analagously to the Murnaghan-Nakayama insertions for $\lambda^i$, $i$-cycles can only insert non-trivially into the subhook exactly one way, and here that insertion will have sign $(-1)^k$. 
\end{proof}

\begin{lem}
Then the sum of all the $\lambda^{i,k}$ terms is \[ (a_i - b_i){n \choose i} \sum_{k=1}^{i-1}  (-1)^{k}{i-1 \choose k}\frac{n-2i+k+1}{n-i+k+1}\left( 1 - \frac{i(n-i+k+1)}{{n \choose 2}} \right)^t \].  This is less than half of the $\lambda^i$ term for $t \geq \frac{n^2(\log(i-1)+1)}{i}$. 
\end{lem}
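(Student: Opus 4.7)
My plan is to derive the closed-form sum directly from Proposition \ref{ik} and concentrate the remaining effort on the exponential-decay estimate. The sum formula follows by plugging $k = 1, \ldots, i-1$ into Proposition \ref{ik} and adding termwise; no further combinatorics is required.

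For the second claim, write $r_k := 1 - i(n-i+k+1)/\binom{n}{2}$, so that the $\lambda^i$ term (the $k = 0$ case of Proposition \ref{ik}) has magnitude $|a_i - b_i|\binom{n}{i}\frac{n-2i+1}{n-i+1} r_0^t$. In the regime $i \leq n/3$ that governs this subsection, one checks $r_k > 0$ for all $0 \leq k \leq i-1$, so the triangle inequality reduces the claim to
\[ \sum_{k=1}^{i-1}\binom{i-1}{k}\frac{(n-2i+k+1)(n-i+1)}{(n-i+k+1)(n-2i+1)}\left(\frac{r_k}{r_0}\right)^t \leq \frac{1}{2}.\]
Since $\frac{n-2i+k+1}{n-i+k+1} \leq 1$, the rational prefactor is bounded by $\frac{n-i+1}{n-2i+1}$, and $i \leq n/3$ (equivalently $3i \leq n+1$) gives $\frac{n-i+1}{n-2i+1} \leq 2$. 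So it suffices to prove
\[ \sum_{k=1}^{i-1}\binom{i-1}{k}(r_k/r_0)^t \leq \frac{1}{4}.\]

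The key identity is $r_k = r_0 - ik/\binom{n}{2}$, giving $r_k/r_0 = 1 - ik/D$ with $D := \binom{n}{2} - i(n-i+1) \leq n^2/2$. Thus $(r_k/r_0)^t \leq e^{-tik/D} \leq e^{-2tik/n^2} = y^k$ for $y := e^{-2ti/n^2}$, and the binomial theorem collapses the sum:
\[ \sum_{k=1}^{i-1}\binom{i-1}{k}y^k \leq (1+y)^{i-1} - 1 \leq e^{(i-1)y} - 1.\]
The threshold $t \geq n^2(\log(i-1)+1)/i$ yields $y \leq e^{-2}/(i-1)^2$, hence $(i-1)y \leq e^{-2}$, and finally $e^{(i-1)y} - 1 \leq e^{e^{-2}} - 1 < 1/4$, completing the argument.

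The main obstacle I anticipate is constant bookkeeping: the target $1/4$ is calibrated precisely to absorb the factor of $2$ from the rational prefactor, and the chain of inequalities $(r_k/r_0)^t \leq y^k$ together with $(1+y)^{i-1} \leq e^{(i-1)y}$ leaves only enough slack to land at the clean threshold $n^2(\log(i-1)+1)/i$. The trivial case $i = 1$ (empty sum) should be flagged separately, and the positivity of the $r_k$ should be verified at the outset so that the triangle inequality produces the stated bound without sign complications.
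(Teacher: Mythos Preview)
Your argument is correct and reaches the stated threshold, but it takes a genuinely different route from the paper's proof. The paper exploits the alternating sign $(-1)^k$ directly: it shows that for $t$ at the threshold the magnitude of the $k$th summand exceeds that of the $(k{+}1)$th by a factor of at least $2$ (the worst case being $k=0$, where the binomial ratio contributes only $1/(i-1)$ and the exponential factor must compensate). An alternating series whose terms drop in absolute value is bounded in magnitude by its first term, and $|T_1|\le |T_0|/2$ then gives the ``less than half'' conclusion. You, by contrast, discard the alternation entirely, apply the triangle inequality, bound the rational prefactor uniformly by $2$ using $i\le n/3$, and then collapse the remaining sum via the binomial theorem to $(1+y)^{i-1}-1$ with $y=e^{-2ti/n^2}$. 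The paper's approach is structurally sharper (it actually uses the cancellation) and would tolerate larger individual terms; your approach is more elementary and avoids the somewhat delicate ratio computation, at the cost of needing each term to be small in its own right. Both land on the same threshold $t\ge n^2(\log(i-1)+1)/i$, and your handling of the $i=1$ base case and the positivity of the $r_k$ is appropriate.
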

\begin{proof}
For $t \geq \frac{n^2(\log(i-1)+1)}{i}$ consequtive terms will be shown to fall by half in magnitude.  Taken with the alternating signs this gives the desired result.  The $i=1$ case is trivial.  Consider $2 \leq i \leq n/3$:

\begin{align*} & \frac{(a_i-b_i){n \choose i}}{(a_i - b_i){n \choose i}}\frac{{i-1 \choose k}\frac{n-2i+k+1}{n-i+k+1}\left( 1 - \frac{i(n-i+k+1)}{{n \choose 2}} \right)^t}{{i-1 \choose k+1}\frac{n-2i+k+2}{n-i+k+2}\left( 1 - \frac{i(n-i+k+2)}{{n \choose 2}} \right)^t} \\
= & \frac{k}{i-k-1}\left(1 - \frac{1}{n-i+k+1}\right)\left(1+\frac{1}{n-2i+k+2}\right)\left( 1 + \frac{i}{{n \choose 2}(1 - \frac{i(n-i+k+2)}{{n \choose 2}})} \right)^t \\
\geq & \frac{1}{i-1}\left(1 - \frac{1}{n-i+1}\right)e^{t\log(1 + \frac{i}{{n \choose 2}-i(n-i+1)})} \\
\geq & \frac{3}{4(i-1)}e^{t\frac{i}{{n \choose 2}-i(n-i+1)}(1 - \frac{i}{2({n \choose 2}-i(n-i+1))})} \\
\geq & 2e^{t\frac{i}{\log(i-1)+ \log(3/2)}\frac{2}{{n \choose 2}-i(n-i+1)} } \\
\geq & 2
\end{align*}
\end{proof}

This is a central obstruction to CL order holding at the mixing time $\frac{1}{2}n\log(n)$.  When $i=1$ this issue does not exist since it is the sole term with $\lambda_1=n-1$. However, for all other $i$ simulation suggests that although the sum of the $\lambda^i$ and $\lambda^{i,k}$ terms may be positive by $O(n\log(n))$ time, it will not be a a constant fraction of the $\lambda^i$ term until $O(\frac{n^2\log(i)}{i})$. And by the time $i$ is $O(n)$, another impediment to $O(n\log(n))$ order holding will arise.

\begin{prop}
For $\lambda = [n-j,...]$ with $i < j \leq \frac{n}{2} $, \[ \frac{\chi_{\lambda^i}(\tau)}{d_{\lambda^i}} - \frac{\chi_{\lambda}}{d_{\lambda}} \geq \frac{\chi_{\lambda^i}(\tau)}{d_{\lambda^i}} - \frac{\chi_{\lambda^j}(\tau)}{d_{\lambda^j}} = \frac{(j-i)(n-(i+j) +1)}{{n \choose 2}} \geq \frac{(j-i)\frac{n}{6}}{{n \choose 2}}\] When $i \leq \frac{n}{3}$ this difference is at least $\frac{j-i}{3n}$.
\end{prop}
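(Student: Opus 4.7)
The plan is to leverage two results already at hand: Proposition~\ref{tf} (transposition character ratios respect majorization) and the explicit formula $\frac{\chi_{\lambda^k}(\tau)}{d_{\lambda^k}} = 1 - \frac{k(n-k+1)}{{n \choose 2}}$ established earlier in the section for the two-row partitions $\lambda^k = [n-k,k]$.

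First, I would prove the opening inequality by showing $\lambda^j = [n-j,j]$ majorizes any partition $\lambda$ of $n$ with first row $n-j$. Since both share $\lambda^j_1 = \lambda_1 = n-j$, the partial sums agree at $m=1$. For $m \geq 2$, $\sum_{k=1}^{m}\lambda^j_k = n \geq \sum_{k=1}^{m}\lambda_k$ because $\lambda^j$ has already exhausted all $n$ boxes by row $2$. Hence $\lambda^j \trianglerighteq \lambda$, and Proposition~\ref{tf} gives $\frac{\chi_{\lambda^j}(\tau)}{d_{\lambda^j}} \geq \frac{\chi_\lambda(\tau)}{d_\lambda}$, which rearranges to the first stated inequality.

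Next, for the equality, I would substitute the explicit two-row formula:
\[ \frac{\chi_{\lambda^i}(\tau)}{d_{\lambda^i}} - \frac{\chi_{\lambda^j}(\tau)}{d_{\lambda^j}} = \frac{j(n-j+1) - i(n-i+1)}{{n \choose 2}}, \]
and factor the numerator via $(j-i)n - (j^2-i^2) + (j-i) = (j-i)(n-i-j+1)$, which is exactly the middle expression in the statement.

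Finally, for the lower bound $\frac{(j-i)n/6}{{n \choose 2}}$, I would invoke the standing assumption $i \leq n/3$ from the enclosing case just announced (the section splits into $i \leq n/3$ and $i > n/3$); combined with the hypothesis $j \leq n/2$ this yields $i+j \leq 5n/6$, hence $n-i-j+1 \geq n/6$. The corollary bound $\geq \frac{j-i}{3n}$ then follows since $\frac{(j-i)n/6}{{n \choose 2}} = \frac{j-i}{3(n-1)} \geq \frac{j-i}{3n}$. The argument presents no real obstacle; it is a direct chain of applications of Proposition~\ref{tf} together with the closed form for the character ratio at $\lambda^k$. The only conceptual point is recognizing that $\lambda^j$ is majorization-maximal among partitions of $n$ with first row of length $n-j$, which is immediate from the definition.
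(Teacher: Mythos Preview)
Your proof is correct and follows essentially the same approach as the paper: both use that $\lambda^j$ is majorization-maximal among partitions with first row $n-j$ (invoking Proposition~\ref{tf}) and then substitute the explicit two-row character-ratio formula to obtain and factor the difference. You simply fill in details the paper leaves implicit, including the verification of the $n/6$ lower bound using $i\le n/3$ and $j\le n/2$, which the paper's proof omits entirely.
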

\begin{proof}
$\lambda^j$ has the largest character ratio at the transposition of all partitions with $j$ elements below the first row by majorization order.  Then the difference of character ratios of $\lambda^i$ and $\lambda^j$ is \[\left(1 - \frac{i(n-i+1)}{{n \choose 2}} \right) - \left(1 - \frac{j(n-j+1)}{{n \choose 2}}\right) = \frac{(j-i)(n-(i+j) +1)}{{n \choose 2}} \]
\end{proof}

\begin{prop}\label{jbig}
For $\lambda =[n-j,...]$ with $j > \frac{n}{2}$, \[ \frac{\chi_{\lambda^i}(\tau)}{d_{\lambda^i}} - \frac{\chi_{\lambda}}{d_{\lambda}} \geq \frac{j-i}{3n} \]
\end{prop}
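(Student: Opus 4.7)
The plan is to upper-bound $\chi_\lambda(\tau)/d_\lambda$ by replacing $\lambda$ with its majorizing ``rectangular'' partition and then compare with $\lambda^i$. Set $m = n-j$ and write $n = qm + r$ with $0 \le r < m$; since $j > n/2$ this forces $q \ge 2$. Let $\lambda^* = [\underbrace{m,\ldots,m}_{q},r]$ be the partition with first row of length $m$ and the remaining $j$ boxes packed greedily. Among all partitions of $n$ with every part at most $m$, the partial sums $\min(km,n)$ of $\lambda^*$ are maximal, so $\lambda^* \trianglerighteq \lambda$ for every $\lambda$ with $\lambda_1 = n-j$; Proposition~\ref{tf} then gives $\chi_\lambda(\tau)/d_\lambda \le \chi_{\lambda^*}(\tau)/d_{\lambda^*}$. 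Checking partial sums also shows $\lambda^i \trianglerighteq \lambda^*$, where the key point $n \ge 2m$ is just the hypothesis $j \ge n/2$.

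Proposition~\ref{tf} now expresses $\chi_{\lambda^i}(\tau)/d_{\lambda^i} - \chi_{\lambda^*}(\tau)/d_{\lambda^*}$ as the sum-of-contents difference $C(\lambda^i) - C(\lambda^*)$ divided by $\binom{n}{2}$, where $C(\mu) = \sum_i \binom{\mu_i}{2} - \sum_j \binom{\mu'_j}{2}$. Using $(\lambda^*)' = [(q+1)^r, q^{m-r}]$, a short expansion gives
\[
2C(\lambda^*) = n(m-q) - r(m+q+1-r).
\]
Since $r < m$ makes $m+q+1-r > q+1 > 0$, the subtracted term is non-negative, so combined with $2C(\lambda^i) = n(n-1) - 2i(n-i+1)$ and the identity $n-1-m+q = j+q-1$ this yields
\[
2\bigl[C(\lambda^i) - C(\lambda^*)\bigr] \;\ge\; n(j+q-1) - 2i(n-i+1).
\]

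The claimed bound $\chi_{\lambda^i}(\tau)/d_{\lambda^i} - \chi_\lambda(\tau)/d_\lambda \ge (j-i)/(3n)$ now reduces, after invoking $q \ge 2$ and multiplying out, to $f(i) := 6i^2 - (5n+7)i + (2nj+3n+j) \ge 0$. The vertex of this upward-opening parabola sits at $(5n+7)/12 > n/3$, so under the section's standing hypothesis $i \le n/3$ the function $f$ is monotone decreasing on $[0, n/3]$ and it suffices to check $f(n/3) \ge 0$. Direct substitution gives $f(n/3) = 2nj - n^2 + 2n/3 + j$, which is positive because $j > n/2$ forces $2nj > n^2$. The main obstacle will be getting $2C(\lambda^*)$ into the compact form above and confirming that, after dropping the non-negative term $r(m+q+1-r)$, the combined hypotheses $q \ge 2$ and $i \le n/3$ leave just enough slack; the worst case is $i$ near $n/3$ with $j$ just above $n/2$, where $f(n/3)$ grows only like $\Theta(n)$.
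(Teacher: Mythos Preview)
Your argument is correct and is essentially the paper's: both pick the greedy partition $\lambda^*$ as the majorization-maximal $\lambda$ with $\lambda_1=n-j$, bound its transposition character ratio (you via the content formula $C(\mu)=\sum_i\binom{\mu_i}{2}-\sum_j\binom{\mu'_j}{2}$, the paper via the equivalent box-distance count to $[n]$), and finish with elementary algebra using $j>n/2$ together with the section's standing hypothesis $i\le n/3$. The only difference is bookkeeping --- the paper reaches the slightly sharper intermediate bound $\binom{n}{2}\bigl(1-\chi_{\lambda^*}(\tau)/d_{\lambda^*}\bigr)\ge\tfrac12 j(n+2)$ and manipulates directly, whereas you drop the $r(m+q+1-r)$ term and reduce the endgame to checking the quadratic $f(i)\ge0$.
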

\begin{proof}
The largest $\lambda = [n-j,...]$ in majorization order is the one whose Ferrers diagram has boxes as far right and up as possible, so the one with $\lfloor \frac{n}{n-j} \rfloor$ rows of $n-j$ squares and a final row of $n - (n-j) \lfloor \frac{n}{n-j} \rfloor$ blocks.  To find its character ratio, count how far these blocks have to travel to all be in the first row.  The first $n-j$ stay in place.  Then next $n-j$ each travel a distance of $n-j+1$ from the second row.  The third group of $n-j$ travel $2(n-j+1)$ from the third row, and so on, ending with the spare $n - (n-j) \lfloor \frac{n}{n-j} \rfloor$ blocks each traveling $\lfloor \frac{n}{n-j}(n-j+1) \rfloor$.  This gives a total distance traveled of 
\begin{align*}& {n \choose 2}\left(1 - \frac{\chi_{\lambda}(\tau)}{d_{\lambda}}\right) \\
&\geq 0(n-j)(n-j+1) + 1((n-j)(n-j+1))+... + \lfloor \frac{n}{n-j} -1 \rfloor(n-j)(n-j+1) + \\
&\lfloor \frac{n}{n-j} \rfloor(n-j+1) \left(n - (n-j) \lfloor \frac{n}{n-j} \rfloor \right) \\
=& \frac{1}{2}\left(\lfloor \frac{n}{n-j} \rfloor -1\right)\left(\lfloor \frac{n}{n-j} \rfloor\right)(n-j+1)(n-j) +\left(\lfloor \frac{n}{n-j} \rfloor\right)(n-j+1) \left(n - (n-j) \lfloor \frac{n}{n-j} \rfloor \right) \\
=& \left(\lfloor \frac{n}{n-j} \rfloor\right)(n-j+1)\left(\frac{1}{2}\left(\lfloor \frac{n}{n-j} \rfloor -1\right)(n-j) +   \left(n - (n-j) \lfloor \frac{n}{n-j} \rfloor \right)\right) \\
=& \left(\lfloor \frac{n}{n-j} \rfloor\right)(n-j+1)\left(n - \frac{1}{2}\left(\lfloor \frac{n}{n-j} \rfloor +1\right)(n-j)\right) \\
=& \frac{1}{2}\left(\lfloor \frac{n}{n-j} \rfloor\right)(n-j+1)\left(n+j - \lfloor \frac{n}{n-j} \rfloor(n-j)\right)\\
\geq& \frac{1}{2}j(n+2)  \end{align*}

Then,

\begin{align*} \frac{\chi_{\lambda^i}(\tau)}{d_{\lambda^i}} - \frac{\chi_{\lambda}}{d_{\lambda}} &\geq \frac{\frac{1}{2}j(n+2) - i(n-i+1)}{{n \choose 2}} \\
&= \frac{\frac{1}{2}(j-i)(n+2) - \frac{n}{2} - i(\frac{n}{2} - i)}{{n \choose 2}} \\
&\geq \frac{(j-i)(n/2 - i +1)}{{n \choose 2}} \\
&\geq \frac{j-i}{3n} \end{align*}

\end{proof}

\begin{prop}
Taking the sum over $\lambda = [n-j,...]$ with $j>i$ and $h_{2,1} \geq i$, \[\sum_{\lambda} d_{\lambda} \leq {{n \choose j}}{{j \choose a,b}}e_2(j-i)\] where if $j \geq \frac{3}{2}i$, $a=b=j/3$ otherwise $a=b=i/2$.
\end{prop}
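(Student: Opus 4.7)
The plan is to parameterize partitions $\lambda$ with $\lambda_1 = n-j$ by the partition $\mu = (\lambda_2, \lambda_3, \ldots) \vdash j$ sitting below the first row. A direct computation gives $h_{2,1}(\lambda) = \mu_1 + \ell(\mu) - 1$, so the constraint $h_{2,1} \geq i$ becomes $\mu_1 + \ell(\mu) \geq i+1$. The first reduction is to bound $d_\lambda \leq \binom{n}{j} f^\mu$, where $f^\mu$ denotes the dimension of the $S_j$-irreducible indexed by $\mu$. This comes from the injective assignment sending each standard Young tableau of $\lambda$ to the pair consisting of the set of $n-j$ entries in row $1$ together with the restriction of the tableau to rows $2$ and below, relabeled order-preservingly to $[j]$; the first row is then forced to be those entries in increasing order, and the remaining entries form a SYT of $\mu$. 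Not every such pair is realizable (the column inequalities between rows $1$ and $2$ can fail), so this is only an upper bound.

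Next I would bound $f^\mu$ itself by peeling off both the first row and the first column of $\mu$. Setting $a' = \mu_1$, $b' = \ell(\mu)$, and letting $\nu = (\mu_2 - 1, \mu_3 - 1, \ldots, \mu_{b'} - 1) \vdash k$ with $k = j - a' - b' + 1$ denote the inner shape, a SYT of $\mu$ is determined by its $a'$ row-$1$ entries (increasing), its $b' - 1$ entries in column $1$ below $(1,1)$ (increasing), and a SYT of $\nu$. This yields $f^\mu \leq \binom{j}{a', b' - 1, k} f^\nu$, and summing the $f^\nu$ factor over all $\nu \vdash k$ gives the involution count $e_2(k)$, so $\sum_{\mu : \mu_1 = a', \ell(\mu) = b'} f^\mu \leq \binom{j}{a', b' - 1, k} e_2(k)$.

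Combining with the constraint $a' + b' \geq i + 1$ (so $k \leq j - i$) and summing over the allowed $(a', b')$ pairs reduces the proposition to optimizing the trinomial coefficient $\binom{j}{a', b', j - a' - b'}$ subject to $a' + b' \geq i$. The unconstrained interior maximum at $a' = b' = j - a' - b' = j/3$ is feasible precisely when $2j/3 \geq i$, that is when $j \geq \tfrac{3}{2} i$, giving $a = b = j/3$; otherwise the constraint binds and the symmetric maximum on the boundary $a' + b' = i$ gives $a = b = i/2$. The main obstacle is the polynomial-in-$j$ factor coming from summing over the $(a', b')$ pairs together with the off-by-one losses in the binomials and in $e_2$; both should be absorbable into the rapid growth of the involution numbers $e_2(\cdot)$, but making them fit the clean statement requires careful bookkeeping.
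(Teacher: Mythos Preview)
Your approach is essentially the same as the paper's: the same three-piece decomposition into the first row, the hook at $(2,1)$ (equivalently the first row and first column of your $\mu$), and the inner shape of size $j-h_{2,1}\le j-i$; the same trinomial-times-$e_2$ bound; and the same optimization of the trinomial. The paper's $a=\lambda_2$ and $b=\lambda_1'-2$ are exactly your $a'$ and $b'-1$, so the two arguments line up term for term.

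The obstacle you flag at the end---the polynomial-in-$j$ overhead from summing over the admissible $(a',b')$ pairs and the off-by-one losses---is real, and the paper's own proof glosses over it in the same way: it simply takes the maximizing $(a,b)$ and writes the product $\binom{n}{j}\binom{j}{a,b}e_2(j-i)$ without accounting for the number of pairs. So the displayed inequality is not literally established by either argument without an extra factor of order $j^2$. This is harmless for the downstream lemma, where the bound is fed into an exponential comparison and any polynomial factor is absorbed into the $t$ needed; but if you want the stated inequality to hold on the nose you would have to tighten the combinatorics or weaken the statement by that polynomial factor. Your identification of this as the one loose joint is accurate.
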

\begin{proof}
The key here is to use that $d_\lambda$ counts the number of standard Young tableau of $\lambda$. To upperbound the sum, $\lambda$ will be seperated into three pieces. The first piece is $\lambda_1=n-j$. The second piece is $h_{2,1}$, of which it is restricted to $i \leq h_{2,1} \leq j$ and is made of $\lambda_2$ and $\lambda_1'-2$. This leaves a partition consisting of all blocks not in $\lambda_1,\lambda_2,\lambda_1'$, which contains at most $j-i$ blocks. Consider all possible ways of placing $1,2,...,n$ into these parts so that each row/column inside part is increasing. Not all of these will give valid tableau but all possible tableau will be present. First choose $n-j$ of $n$ to place in $\lambda_1$. Then $\lambda_2$ and $\lambda_1'$ of the remaining $j$. Let $a,b$ be the values of $\lambda_2,\lambda_1'-2$ that maximize this over all $\lambda$. Finally this leaves a partition of size at most $j-i$. The sum of the dimensions of all partitions of $j-i$ is a combinatorial object called a telephone number and denoted $e_2(j-i)$ (the name follows from a bijection with involutions of $S_n$). From \cite{Knuth3}, $e_2(j-i) \leq \left(\frac{j-i}{e}\right)^{(j-i)/2} e^{\sqrt{j-i}}$. This gives 
\[\sum_{\lambda} d_{\lambda} \leq {{n \choose j}}{{j \choose a,b}}e_2(j-i)\]
This leaves finding values of $a,b$. ${j \choose a,b,j-a-b}$ is maximized when the parts are as equal as possible in size, so clearly $a=b$. Also, $i \leq a+b \leq j$. So if $j \geq \frac{3}{2}i$, all parts can be equal letting $a=b=j/3$. Otherwise, it is maximized when $a=b=i/2$, letting the remaining part be as large as possible.  
\end{proof}

\begin{prop}
For $\lambda = [n-j,...]$ with $j>i$ and a $i$-cycle detector and $\alpha,\beta$ conjugacy classes of the same parity with $min_k(a_k \neq b_k) = i$,
\[ \chi_{\lambda}(\alpha) - \chi_{\lambda}(\beta) \leq (n-2i)_{j-i}(j-i+1)^{2}\frac{n-i}{i}\]
\end{prop}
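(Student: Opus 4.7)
The plan is to employ the character polynomial representation $\chi_\lambda(\gamma) = q_\mu(c_1,\ldots,c_j)$ with $\mu = [\lambda_2,\ldots,\lambda_r] \vdash j$. Since $a_k = b_k$ for $k < i$, only monomials in $q_\mu$ containing some variable $x_m$ with $m \ge i$ can contribute to the difference; call this partial polynomial $q^{\ge i}_\mu$. I will bound each of $|q^{\ge i}_\mu(a)|$ and $|q^{\ge i}_\mu(b)|$ and combine by the triangle inequality.

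The essential structural fact I would establish first is that $q_\mu$ has weighted degree exactly $|\mu| = j$ when each $x_m$ carries weight $m$. This comes directly from the defining expression: every summand $\chi_\mu(\alpha)\prod_m(m x_m - 1)^{a_m}/z_\alpha$ for $\alpha \vdash j$ has weighted degree $\sum_m m a_m = j$, and $\downarrow$ preserves weighted degree. Consequently, every monomial $\prod_m (x_m)_{c_m}$ in $q^{\ge i}_\mu$ obeys $\sum_m m c_m \le j$ with some $c_m \ge 1$ for $m \ge i$, so the residual weighted degree in $x_1,\ldots,x_{i-1}$ is at most $j - i$. This is the analogue, at the level of character polynomials, of Corollary \ref{Icor}.

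For the numerical bound, I would combine $(a_m)_{c_m} \le a_m^{c_m}$ with the size constraints on cycle types. The distinguished variable $x_m$, $m \ge i$, contributes at most $a_m \le (n-i)/i$ using $m\cdot a_m \le n - i$, which holds after accounting for a second cycle of size $\ge i$ that must exist in $\alpha$ or $\beta$ for the character difference to avoid trivial cancellation (recall that a single large cycle together with matching small cycles would force $\alpha = \beta$). For the low-index factors $\prod_{k<i}(a_k)_{c_k}$, concentrating residual weighted degree on $x_1$ maximizes the product, giving $(a_1)_{j-i} \le (n-2i)_{j-i}$, where the $-2i$ offset reflects the $\ge 2i$ boxes committed to cycles of size $\ge i$. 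Finally, the number of monomials with these degree constraints is polynomial in $j-i$, bounded by $(j-i+1)^2$, and the coefficients of $q_\mu$ can be absorbed into this count.

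The main obstacle I anticipate is rigorously controlling the coefficients of $q_\mu$, which are rational combinations of characters of $S_j$ divided by $z_\alpha$, and handling the precise numerical factor $(n-i)/i$ rather than the looser $n/i$ that naive application of $a_m \le n/m$ would yield. The cleanest resolution is likely a parallel Murnaghan--Nakayama argument: the signed contributions to $\chi_\lambda(\alpha) - \chi_\lambda(\beta)$ correspond to placements of a distinguished border strip of size $\ge i$ in $\lambda$ together with placements of the remaining strips in the complementary skew shape. Direct counts of such placements, aided by the $i$-cycle detector hypothesis $h_{2,1}, h_{1,2} \ge i$ which bounds how large a strip can fit in the subhook, reproduce each of the factors $(n-2i)_{j-i}$, $(j-i+1)^2$, and $\frac{n-i}{i}$ in the claimed bound.
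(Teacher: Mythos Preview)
Your character polynomial route has a real gap exactly where you flag it: the assertion that ``the number of monomials with these degree constraints is polynomial in $j-i$, bounded by $(j-i+1)^2$, and the coefficients of $q_\mu$ can be absorbed into this count'' is unsupported. The number of monomials of weighted degree $\le j$ with one distinguished variable of index $\ge i$ is governed by a partition count, not by $(j-i+1)^2$, and the coefficients of $q_\mu$ are alternating sums of $\chi_\mu(\alpha)/z_\alpha$ over $\alpha \vdash j$ with no obvious uniform bound of the right size. So as written the character polynomial bookkeeping does not close.

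Your closing paragraph is the right instinct and is precisely what the paper does. The argument is a direct Murnaghan--Nakayama count of insertions into $\lambda = [n-j,\ldots]$ that do \emph{not} cancel in the difference, i.e.\ those with at least one cycle of length $\ge i$ inserted non-trivially below the first row. The four factors arise as follows:
\begin{itemize}
\item The first (trivial) insertion has length $\ge i$ and its starting column in the first row can be any of $j-i+1$ positions while still leaving room for a non-trivial $i$-strip; this contributes one factor of $(j-i+1)$.
\item The first non-trivially inserted large cycle is one of at most $\tfrac{n}{i}-1 = \tfrac{n-i}{i}$ cycles of size $\ge i$, and there are at most $j-i+1$ rows in which it can start; this contributes $\tfrac{n-i}{i}\cdot(j-i+1)$.
\item After committing $\ge 2i$ boxes to two large cycles, at most $n-2i$ small cycles remain, and at most $j-i$ of them can still fit below the first row; choosing and ordering them gives $(n-2i)_{j-i}$.
\end{itemize}
Multiplying gives $(n-2i)_{j-i}(j-i+1)^2\tfrac{n-i}{i}$. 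Note in particular that the $(j-i+1)^2$ is not a monomial count but the product of two independent placement choices (start of the trivial insertion, row of the first non-trivial large cycle), and the $i$-cycle detector hypothesis is used only implicitly to guarantee such non-trivial placements exist at all.
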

\begin{proof}
This bound is found by bounding the valid insertions from Murnaghan-Nayama containing non-trivial $i$-cycle insertions independent of sign. Each insertion is determined by how the cycles that insert below the first row of $\lambda$ insert.  The first insertion must be trivial and at least $i$ in length, as both cycles must contain an $i$ or larger cycle.  There are at most $j-i+1$ places in the first row to start this insertion leaving room for an $i$-cycle. Any insertions with no $i$ or larger cycles inserted non-trivially, the first of which is below the first row, will cancel in the difference, so the sum in Murnaghan-Nakayama is restricted to insertions where at least one $i$ or larger cycle is inserted non-trivially.  This means that one of $\alpha,\beta$ has at least two cycles of size $\geq i$. Then since $\sum_{j<i} j(a_j) = \sum_{j<i} j(b_j)$ both are at most $n-2i$. Following this first non-trivial $i$ or larger cycle at most $j-i$ cycles can then additionally insert below the first row and there are at most $n-2i$ of these. There are at most $(j-i)!$ ways of arranging these. And there are at most $\frac{n}{i} - 1$ choices for the first non-trivial $i$ or larger cycle and $j-i+1$ rows in which to insert it.    
\end{proof}

\begin{prop}
For $\lambda = [n-j,...]$ with $i < j$, $\alpha, \beta$ conjugacy classes of the same parity with $min_k(a_k \neq b_k) = i$, $i \leq \frac{n}{3}$, 

\[ \left| \frac{\frac{\chi_{\lambda^i}(\tau)}{d_{\lambda^i}}}{\frac{\chi_\lambda(\tau)}{d_\lambda}} \right|^t \geq e^{t\frac{(j-i)}{6n}} \]
\end{prop}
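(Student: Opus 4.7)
The plan is to combine the additive character-ratio bound from the two immediately preceding propositions (which together guarantee, for $i \leq n/3$ and all $j>i$, the uniform estimate $\frac{\chi_{\lambda^i}(\tau)}{d_{\lambda^i}} - \frac{\chi_{\lambda}(\tau)}{d_{\lambda}} \geq \frac{j-i}{3n}$) with an elementary inequality that converts this additive gap into a multiplicative ratio bound. Abbreviating $A = \chi_{\lambda^i}(\tau)/d_{\lambda^i}$ and $B = \chi_\lambda(\tau)/d_\lambda$, the claim $|A/B|^t \geq e^{t(j-i)/(6n)}$ reduces after taking logarithms and dividing through by $t$ to $\log|A/B| \geq (j-i)/(6n)$.

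In the principal case $B > 0$, I would first observe $0 < B \leq A \leq 1$: that $\lambda \trianglelefteq \lambda^i$ (since $\lambda$ is an $i$-cycle detector, so $\lambda_1 \leq n-i$ by Lemma~\ref{ILem}) combined with Diaconis's majorization inequality (Proposition~\ref{tf}) forces $B \leq A$. Rearranging, $A/B = 1 + (A-B)/B \geq 1 + (A-B) \geq 1 + (j-i)/(3n)$, where the first step uses $B \leq 1$ and the second substitutes the bound from the preceding proposition. The standard inequality $1 + x \geq e^{x/2}$ on $[0,1]$, applicable because $(j-i)/(3n) \leq 1/3$, then promotes this to $A/B \geq e^{(j-i)/(6n)}$, matching the claim.

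The remaining case $B \leq 0$ runs parallel on $|B|$ in place of $B$. The conjugate $\lambda'$ is also an $i$-cycle detector (by Lemma~\ref{ILem}, since conjugation swaps $h_{2,1}$ and $h_{1,2}$), has character ratio $-B$ at the transposition, and so by majorization satisfies $|B| \leq A$. Applying the preceding proposition to $\lambda'$ produces the bound $A - |B| = A + B \geq (j^{\ast}-i)/(3n)$ with $j^{\ast} = n - \ell(\lambda)$, after which the same inequality chain as in the positive case yields $A/|B| \geq e^{(j^{\ast}-i)/(6n)}$. The main technical point, and the step I expect to be the main obstacle, is verifying that this exponent dominates the advertised $(j-i)/(6n)$, i.e.\ that $j^{\ast} \geq j$; this comparison comes from the restriction $h_{2,1}(\lambda) \leq h_{1,2}(\lambda)$ that governs this section (partitions violating it being handled by conjugate symmetry elsewhere in the argument), unpacked through the hook-length definitions $h_{2,1} = \lambda_2 + \lambda'_1 - 2$ and $h_{1,2} = \lambda_1 + \lambda'_2 - 2$.
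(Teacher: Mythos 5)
Your principal case ($\chi_\lambda(\tau)/d_\lambda>0$) is the paper's proof verbatim: write $|A/B|^t=(1+(A-B)/B)^t\geq(1+(A-B))^t$ using $0<B\leq1$, insert the additive gap $A-B\geq(j-i)/(3n)$ from the two preceding propositions, and finish with $\log(1+x)\geq x(1-x/2)\geq x/2$ for $x\leq 1$. The paper's displayed chain is exactly this and, like your main case, tacitly assumes the denominator is positive; it does not separately treat $B\leq0$ at all. So relative to the paper's own argument you are not missing anything --- you are attempting to be more careful.

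However, the step you flag as the main obstacle in your $B\leq0$ branch does in fact fail as you propose to justify it. You need $j^{\ast}=n-\lambda_1'\geq j=n-\lambda_1$, i.e.\ $\lambda_1\geq\lambda_1'$, and you propose to deduce this from the section's standing restriction $h_{2,1}\leq h_{1,2}$. That implication is false: for $\lambda=[3,2,2,2,2]\vdash 11$ one has $h_{2,1}=5\leq 6=h_{1,2}$ but $\lambda_1=3<5=\lambda_1'$, so $j^{\ast}=6<8=j$; moreover this $\lambda$ really does have negative character ratio at the transposition ($\sum_m\lambda_m(\lambda_m-2m+1)=-26$), so it lands squarely in the problematic branch. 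Your argument there only yields the exponent $(j^{\ast}-i)/(6n)$, which is strictly weaker than the advertised $(j-i)/(6n)$. The inequality itself still holds in this example ($A/|B|=28/13$ versus $e^{5/66}$), but closing the case requires a different bound on $|B|$ --- e.g.\ bounding $|B|=\chi_{\lambda'}(\tau)/d_{\lambda'}$ from above directly via Proposition~\ref{tf} and comparing with $A$ quantitatively --- rather than transporting the $j$-indexed gap through the conjugate. As written, neither your branch nor the paper's proof rigorously covers partitions with $h_{2,1}\leq h_{1,2}$ and negative character ratio.
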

\begin{proof}
\begin{align*} \left| \frac{\frac{\chi_{\lambda^i}(\tau)}{d_{\lambda^i}}}{\frac{\chi_\lambda(\tau)}{d_\lambda}} \right|^t 
&\geq \left( 1 + \frac{\frac{\chi_{\lambda^i}(\tau)}{d_{\lambda^i}} - \frac{\chi_{\lambda}(\tau)}{d_\lambda}}{\frac{\chi_\lambda(\tau)}{d_\lambda}} \right)^t \\ 
& \geq \left( 1 + \frac{\chi_{\lambda^i}(\tau)}{d_{\lambda^i}} - \frac{\chi_{\lambda}(\tau)}{d_\lambda}\right)^t \\
& \geq e^{t \log{\left(1 + \frac{(j-i)}{3n}\right)}} \\
&\geq e^{t \frac{(j-i)}{3n}\left(1 - \frac{1}{2}\frac{(j-i)}{3n} \right)} \\ 
& \geq e^{t \frac{(j-i)}{3n}(1 - \frac{1}{2})} \\
& \geq e^{t \frac{(j-i)}{6n}}
\end{align*}
\end{proof}

And the last part for $i \leq \frac{n}{3}$ is the scaling constant $c_j$.  Recall that half of the value of the $\lambda^i$ term was used to counter the $\lambda^{i,k}$ terms so $c_i = \frac{1}{2}$. Then for each additional $j >i$, let $c_j = 2^{-(j-i+1)}$.

\begin{lem}
For $n \geq 9$ and $t \geq 2\log(2)n^2 + 24n\log(n) +36n$ with $i \leq \frac{n}{3}$, $\alpha,\beta$ of conjugacy classes of the same parity with $\min_{a_k \neq b_k} = i$, $\lambda = [n-j,...]$, $j>i$, the $\lambda^i$ term is larger than the sum of all terms with $\lambda_1 = n-j$ by a factor of $\frac{1}{c_j}$.
\end{lem}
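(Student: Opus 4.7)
The plan is to verify the template inequality set up at the beginning of this subsection, which in the regime $j > i$ reads
\[ c_j \cdot \frac{|\chi_{\lambda^i}(\alpha) - \chi_{\lambda^i}(\beta)| \cdot d_{\lambda^i} \cdot |\chi_{\lambda^i}(\tau)/d_{\lambda^i}|^t}{\max_{\lambda_1=n-j} |\chi_\lambda(\alpha) - \chi_\lambda(\beta)| \cdot \sum_{\lambda_1=n-j} d_\lambda \cdot \max_{\lambda_1=n-j}|\chi_\lambda(\tau)/d_\lambda|^t} > 1. \]
The inputs are $c_j = 2^{-(j-i+1)}$, $|a_i - b_i| \geq 1$, the closed form $d_{\lambda^i} = \binom{n}{i}(n-2i+1)/(n-i+1)$, the character-difference upper bound $(n-2i)_{j-i}(j-i+1)^2(n-i)/i$, the dimension sum bound $\binom{n}{j}\binom{j}{a,b} e_2(j-i)$, and the character-ratio-quotient estimate $e^{t(j-i)/(6n)}$. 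Taking logarithms reduces the claim to
\[ \frac{t(j-i)}{6n} > (j-i+1)\log 2 + \log\bigl((n-2i)_{j-i}\bigr) + 2\log(j-i+1) + \log\frac{n-i}{i} + \log\frac{\binom{n}{j}\binom{j}{a,b}}{\binom{n}{i}} + \log e_2(j-i) + \log\frac{n-i+1}{n-2i+1}. \]

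Next, I would bound each log-term by routine estimates: $(n-2i)_{j-i} \leq n^{j-i}$, $e_2(j-i) \leq ((j-i)/e)^{(j-i)/2} e^{\sqrt{j-i}}$, and $(n-2i+1)/(n-i+1) \geq 1/3$ for $i \leq n/3$. The multinomial $\binom{j}{a,b}$ splits by cases: when $j \leq 3i/2$, $\binom{j}{a,b} \leq 2^i$, and when $j \geq 3i/2$, $\binom{j}{a,b} \leq 3^j$. After telescoping $(n-i)!/(n-j)!$ against the Pochhammer $(n-2i)_{j-i}$ inside $\binom{n}{j}/\binom{n}{i}$, the entire right-hand side is bounded by a term linear in $n$ (contributed by $2^i$ or $3^j$), plus a term of order $(j-i)\log n$, plus lower-order constants.

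Dividing by $(j-i)/(6n)$ isolates $t$ and produces the $n^2$ scaling from the linear-in-$n$ piece. The worst case is $j - i = 1$ together with $i \approx n/3$: the rate $1/(6n)$ is smallest and the $2^i$ factor peaks near $(n/3)\log 2$, yielding $6n \cdot (n/3)\log 2 = 2\log(2) n^2$, matching the stated leading coefficient. The $24 n\log n + 36 n$ slack absorbs the $(j-i)\log n$ piece after division (at most $6 n \log n$ per unit), together with the $\log((n-i)/i)$, $\log(n-i+1)$, and telephone-number contributions; the hypothesis $n \geq 9$ handles rounding in the case split. The main obstacle is the regime where $j$ is close to $n/2$, since $\binom{n}{j}$ is then near maximal and $3^j$ is exponential in $n$; the resolution is that in this regime $(j-i)$ is also large, so the rate $(j-i)/(6n)$ in the exponent keeps pace, and verification confirms the extremal case remains the boundary $j = i+1$ identified above.
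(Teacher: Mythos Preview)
Your approach is essentially the paper's own: the same template inequality, the same case split on $j$ versus $3i/2$ governing which multinomial bound applies, the same logarithmic bookkeeping, and the same identification of $j=i+1$ with $i\approx n/3$ as the extremal case producing the leading $2\log(2)\,n^2$. One slip to fix: in the regime $j<3i/2$ the bound $\binom{j}{i/2,\,i/2,\,j-i}\le 2^i$ is false (near $j=3i/2$ the multinomial is about $3^{3i/2}$); the paper instead uses the entropy bound $i\log(2j/i)+(j-i)\log(j/(j-i))$ and then extracts $i\log 2$ via $\log(j/i)\le (j-i)/i$, with the residual $O\bigl((j-i)\log n\bigr)$ going into exactly the slack you already budget for.
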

\begin{proof}

First the case where $j \geq \frac{3}{2}i$. Note this means that $(j-i) \geq j/3 \geq i/2$. Take $t \geq 24n\log(n-i+1) + 48n$. For $n \geq 9$ this gives $t \geq 2\log(2)n^2+24n\log(n) +36n$.
\begin{align*} & c_j \left| \frac{\chi_{\lambda^i}(\alpha) - \chi_{\lambda^i}(\beta)}{{\max_{\lambda} \chi_{\lambda}(\alpha) - \chi_{\lambda}(\beta)}} \right| \frac{d_{\lambda^i}}{\sum_{\lambda} d_{\lambda}} \left| \frac{\frac{\chi_{\lambda^i}(\tau)}{d_{\lambda^i}}}{\max_{\lambda} \frac{\chi_\lambda(\tau)}{d_\lambda}} \right|^t  \\
&\geq  2^{-(j-i+1)} (n-2i)_{j-i}^{-1}\frac{1}{(j-i+1)^2}\frac{i}{n-i} {{n \choose i}}\frac{n-2i+1}{n-i+1}{{n \choose j}}^{-1}3^{-j}e_2(j-i)^{-1}\left(1 + \frac{j-i}{3n}\right)^t \\
&\geq e^{-\left((j-i+1)\log(2) + (j-i)\log({n-2i}) + 2\log(j-i+1) \right)} \\
& e^{(-\left((\log{\frac{(n-i)(n-i+1)}{i(n-2i+1)}} + (j-i)\log\frac{e(n-i)}{j} + j\log(3) + \frac{j-i}{2}\log\frac{j-i}{e} + \sqrt{j-i} \right) + t\log(1 + \frac{j-i}{3n})} \\
& \geq e^{-\left( (j-i)\left(ln(6) + 3 + \frac{1}{\sqrt{j-i}} + \log{\frac{(n-2i)(n-i)\sqrt{j-i}}{j}} \right) + \log\frac{(j-i+1)^2(n-i)(n-i+1)}{i(n-2i+1)} + i\log3\right) \frac{t(j-i)}{6n}} \\
& \geq e^{-\left( (j-i)\left(ln(54) + 4 + \log{\frac{(n-2i)(n-i)\sqrt{j-i}}{j}}\right) + 	\log\frac{(j-i+1)^2(n-i)(n-i+1)}{i(n-2i+1)}\right) + \frac{t(j-i)}{6n}}	\\
& \geq e^{-\left( (j-i)\left(8 + 2\log{n-i}\right) + 2\log{(j-i)(n-i+1)}\right) + \frac{t(j-i)}{6n}} \\
& \geq e^{(j-i) \left(-(8 + (2+ \log(2))\log(n-i+1)) + \frac{t}{6n}\right)} \\
& > 1
\end{align*}

Now the case where $j < \frac{3}{2}i$. So $j-i \leq i/2$. Take $t \geq 6n(i\log2 + 4\log(n) + 6)$. When at worst $i = n/3$ this gives $t \geq 2\log(2)n^2 + 24n\log(n) +36n$.
\begin{align*} & c_j \left| \frac{\chi_{\lambda^i}(\alpha) - \chi_{\lambda^i}(\beta)}{{\max_{\lambda} \chi_{\lambda}(\alpha) - \chi_{\lambda}(\beta)}} \right| \frac{d_{\lambda^i}}{\sum_{\lambda} d_{\lambda}} \left| \frac{\frac{\chi_{\lambda^i}(\tau)}{d_{\lambda^i}}}{\max_{\lambda} \frac{\chi_\lambda(\tau)}{d_\lambda}} \right|^t  \\
&\geq  2^{-(j-i+1)} (n-2i)_{j-i}(j-i+1)^{-2} \frac{i}{n-i} {{n \choose i}}\frac{n-2i+1}{n-i+1}{{n \choose j}}^{-1} \\
&{{j \choose i/2,i/2,j-i}}^{-1}e_2(j-i)^{-1} \left(1 + \frac{j-i}{3n}\right)^t \\
&\geq e^{-\left((j-i+1)\log(2) + (j-i)\log({n-2i}) + 2\log(j-i+1) \right)} \\
& e^{-\left( \log{\frac{(n-i)(n-i+1)}{i(n-2i+1)}} + (j-i)\log\frac{e(n-i)}{j} + i\log\frac{2j}{i} + (j-i)\log\frac{j}{j-i} + \frac{j-i}{2}\log\frac{j-i}{e} + \sqrt{j-i} \right) + t\log(1 + \frac{j-i}{3n})}\\
& \geq e^{-\left( (j-i)\left(ln(2) + 3 + \frac{1}{\sqrt{j-i}} + \log{\frac{(n-2i)(n-i)\sqrt{j-i}j}{j(j-i)}} \right) + \log\frac{(j-i+1)^2(n-i)(n-i+1)}{i(n-2i+1)} + i\log\frac{2j}{i}\right) + \frac{t(j-i)}{6n}} \\
& \geq e^{-\left( (j-i)\left(ln(2) + 4 + \log{(n-2i)(n-i)}\right)+ i\log\frac{2j}{i} +\log\frac{(j-i+1)^2(n-i)(n-i+1)}{i(n-2i+1)}\right) + \frac{t(j-i)}{6n}}	\\
& \geq e^{-\left( (j-i)\left(6 + (2 + \log(2))\log{n-i}\right)+ i\log2\right) + \frac{t(j-i)}{6n}} \\
& > 1
\end{align*}

\end{proof}

For $i > n/3$ the partition with the closest character ratio at a transposition to $\lambda^i$ is $\lambda^{i+1}$.  In the case that $i = n/2-1$, this once again attains the worst possible character ratio difference of $\frac{2}{{n \choose 2}}$.  Using the same formulas from before:

\begin{prop}
For $\frac{n}{3} < i < j \leq \frac{n}{2}$,
\[ \frac{\chi_{\lambda^i}(\tau)}{d_{\lambda^i}} - \frac{\chi_{\lambda^j}(\tau)}{d_{\lambda^j}} = \frac{(j-i)(n-(i+j)+1)}{{n \choose 2}} \geq \frac{(j-i)(n/2-i+1)}{{n \choose 2}} \]
\end{prop}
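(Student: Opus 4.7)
The plan is to apply the character ratio formula derived earlier, $\frac{\chi_{\lambda^i}(\tau)}{d_{\lambda^i}} = 1 - \frac{i(n-i+1)}{\binom{n}{2}}$, to both $\lambda^i$ and $\lambda^j$ and simplify. Writing out the difference, the constant $1$ terms cancel, leaving
\[ \frac{\chi_{\lambda^i}(\tau)}{d_{\lambda^i}} - \frac{\chi_{\lambda^j}(\tau)}{d_{\lambda^j}} = \frac{j(n-j+1) - i(n-i+1)}{\binom{n}{2}}. \]
So the first task is purely algebraic: show the numerator factors as $(j-i)(n-(i+j)+1)$. I would expand $j(n-j+1) - i(n-i+1) = (j-i)n - (j^2 - i^2) + (j-i)$ and then use $j^2 - i^2 = (j-i)(j+i)$ to pull out the common factor of $(j-i)$, producing $(j-i)(n - (i+j) + 1)$.

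For the inequality, the hypothesis $j \leq n/2$ gives $i+j \leq i + n/2$, hence $n - (i+j) + 1 \geq n/2 - i + 1$. Since $j > i$ implies $j-i > 0$, multiplying through by $(j-i)/\binom{n}{2}$ preserves the direction of the inequality and yields the claimed lower bound $\frac{(j-i)(n/2 - i + 1)}{\binom{n}{2}}$.

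There is no substantive obstacle here; the statement is essentially a bookkeeping computation built on the earlier character ratio formula and the constraint $j \leq n/2$. The only point worth care is the factorization step, which I would present explicitly so that the appearance of $n - (i+j) + 1$ (mirroring the expression that arose in the $i < j \leq n/2$ case outside the subhook regime) is transparent, emphasizing that the bound $(j-i)(n/2 - i + 1)/\binom{n}{2}$ degrades to the worst case $2/\binom{n}{2}$ precisely when $i = n/2 - 1$ and $j = n/2$, as flagged in the surrounding discussion.
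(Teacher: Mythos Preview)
Your proposal is correct and matches the paper's approach exactly: the paper introduces this proposition with the phrase ``Using the same formulas from before,'' relying on the earlier computation $\frac{\chi_{\lambda^i}(\tau)}{d_{\lambda^i}} = 1 - \frac{i(n-i+1)}{\binom{n}{2}}$ and the same subtraction/factorization you carry out. Your explicit factorization and the observation that $j \leq n/2$ yields $n-(i+j)+1 \geq n/2-i+1$ are precisely the steps the paper leaves implicit.
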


\begin{prop}
For $\frac{n}{3} < i$, $0<k<i$,
\[ \frac{\chi_{\lambda^i}(\tau)}{d_{\lambda^i}} - \frac{\chi_{\lambda^{i,k}}(\tau)}{d_{\lambda^{i,k}}} \geq \frac{ik}{{n \choose 2}} \]
\end{prop}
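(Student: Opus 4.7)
The plan is to compute this difference in two equivalent ways and observe that it is in fact an equality, not merely an inequality. First, I would verify that $\lambda^i = [n-i,i]$ dominates $\lambda^{i,k} = [n-i, i-k, 1^{k}]$ in majorization order: comparing partial sums of row lengths gives $(n-i, n, n, n, \ldots)$ versus $(n-i, n-k, n-k+1, \ldots, n)$, and the former is pointwise at least the latter. Hence Proposition~\ref{tf} applies and expresses the character ratio difference at a transposition as the total Manhattan distance boxes must travel to go from $\lambda^{i,k}$ up to $\lambda^i$, divided by $\binom{n}{2}$.

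Next I would identify the $k$ boxes that differ between the two diagrams. The Ferrers shape of $\lambda^{i,k}$ is obtained from that of $\lambda^i$ by moving the boxes originally at $(2, i-k+1),\ldots,(2,i)$ down to the first column, landing at $(k+2,1),\ldots,(3,1)$ respectively. The box originally at $(2, i-j+1)$ lands at $(j+2,1)$ for $j = 1, \ldots, k$, traveling horizontal distance $i-j$ and vertical distance $j$, for a total Manhattan distance of exactly $i$. Summing over the $k$ displaced boxes yields total distance $ik$, so Proposition~\ref{tf} gives
\[ \frac{\chi_{\lambda^i}(\tau)}{d_{\lambda^i}} - \frac{\chi_{\lambda^{i,k}}(\tau)}{d_{\lambda^{i,k}}} = \frac{ik}{\binom{n}{2}}, \]
which is the claimed bound (in fact attained with equality).

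As a sanity check, the same value can be read off by subtracting the closed form $1 - i(n-i+k+1)/\binom{n}{2}$ for the character ratio of $\lambda^{i,k}$, which appears inside Proposition~\ref{ik}, from the formula $1 - i(n-i+1)/\binom{n}{2}$ established earlier for $\lambda^i$. The hypothesis $i > n/3$ plays no role in this calculation itself; it merely indicates the regime where $\lambda^{i,k}$ is the near-neighbor to $\lambda^i$ in character ratio that must be controlled. There is essentially no obstacle here: the proof amounts to a one-line majorization check followed by a Manhattan-distance bookkeeping, with no delicate estimates required.
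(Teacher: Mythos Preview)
Your proposal is correct and follows essentially the same route as the paper. The paper states this proposition without a separate proof because the exact character ratio $1 - i(n-i+k+1)/\binom{n}{2}$ for $\lambda^{i,k}$ was already obtained in Proposition~\ref{ik} via precisely the box-moving argument you carry out (``$k$ boxes must be moved each a distance of $i$''), so subtracting from the $\lambda^i$ ratio gives the equality $ik/\binom{n}{2}$; your observation that the hypothesis $i>n/3$ is irrelevant to the computation is also accurate.
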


\begin{prop}
For $\frac{n}{3} \leq i < j \leq \frac{n}{2}$, for the $\lambda$ with $\chi_{\lambda}(\alpha) - \chi_{\lambda}(\beta) \neq 0$, $\lambda = [n-j,...] \neq \lambda^j$ when 
\[ \frac{\chi_{\lambda^i}(\tau)}{d_{\lambda^i}} - \frac{\chi_{\lambda}(\tau)}{d_{\lambda}} \geq  \frac{(j-i)(n/2-i+1) +j}{{n \choose 2}}\]
\end{prop}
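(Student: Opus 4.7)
The plan is to identify $\lambda^{j,1} = [n-j,\,j-1,\,1]$ as the partition with the largest transposition character ratio among all $\lambda$ allowed in the statement, and then to bound $\chi_\lambda(\tau)/d_\lambda$ for every such $\lambda$ by that of $\lambda^{j,1}$ via majorization. Once this reduction is in hand, the claim reduces to a short computation: evaluate $\chi_{\lambda^{j,1}}(\tau)/d_{\lambda^{j,1}}$ via Proposition \ref{ik} with parameters renamed, subtract from $\chi_{\lambda^i}(\tau)/d_{\lambda^i}$, and regroup.

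First I would observe that the hypotheses $\lambda_1 = n-j$ and $\lambda \neq \lambda^j$ force $\lambda_2 \leq j-1$ (otherwise $\lambda_2 = j$ gives $\lambda = \lambda^j$) and put at least one box in some row $\geq 3$. Comparing partial sums to $\lambda^{j,1}$, one sees $(\lambda^{j,1})_1 = n-j = \lambda_1$, $(\lambda^{j,1})_1 + (\lambda^{j,1})_2 = n-1 \geq \lambda_1 + \lambda_2$, and the remaining partial sums equal $n$; hence $\lambda^{j,1} \trianglerighteq \lambda$. Proposition \ref{tf} then gives $\chi_\lambda(\tau)/d_\lambda \leq \chi_{\lambda^{j,1}}(\tau)/d_{\lambda^{j,1}}$, replacing the arbitrary $\lambda$ with one specific partition whose character ratio is known.

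Next, Proposition \ref{ik} applied with $j$ in place of $i$ and $k=1$ gives $\chi_{\lambda^{j,1}}(\tau)/d_{\lambda^{j,1}} = 1 - j(n-j+2)/\binom{n}{2}$. Subtracting from $\chi_{\lambda^i}(\tau)/d_{\lambda^i} = 1 - i(n-i+1)/\binom{n}{2}$ and rearranging gives
\[
\frac{\chi_{\lambda^i}(\tau)}{d_{\lambda^i}} - \frac{\chi_{\lambda^{j,1}}(\tau)}{d_{\lambda^{j,1}}} \;=\; \frac{j(n-j+2) - i(n-i+1)}{\binom{n}{2}} \;=\; \frac{(j-i)(n-(i+j)+1) + j}{\binom{n}{2}},
\]
and the hypothesis $j \leq n/2$ yields $n-(i+j)+1 \geq n/2 - i + 1$, producing the claimed bound.

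The only real obstacle is verifying that $\lambda^{j,1}$ really is the majorization maximum once $\lambda^j$ has been excluded, and this is exactly the short partial-sum check above: the only way to achieve $\lambda_1 + \lambda_2 = n-1$ under the constraints (given that some box must lie in row $\geq 3$) is $\lambda = \lambda^{j,1}$ itself, and any other candidate has strictly smaller $\lambda_1 + \lambda_2$. No $i$-cycle detector bookkeeping is needed in this reduction since we are bounding a character ratio rather than a character difference, and the extra summand $j/\binom{n}{2}$ appears precisely as the content shift associated with moving a single box from $(2,j)$ to $(3,1)$ in passing from $\lambda^j$ to $\lambda^{j,1}$.
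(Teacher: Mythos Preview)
Your proof is correct and follows the same idea as the paper's argument, just carried out more explicitly: the paper's three-line proof says to move boxes from $\lambda^i$ down to $\lambda^j$ and then at least one more box into the third row, with that last box traveling distance at least $j$, which is exactly your identification of $\lambda^{j,1}$ as the majorization maximum among $\lambda$ with $\lambda_1=n-j$, $\lambda\neq\lambda^j$. Your explicit computation of $\chi_{\lambda^{j,1}}(\tau)/d_{\lambda^{j,1}}$ via Proposition~\ref{ik} and the algebraic regrouping to $(j-i)(n-(i+j)+1)+j$ make the same content-shift of $j$ visible in a different way, but the substance is identical.
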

\begin{proof}
This follows from moving blocks down from $\lambda^i$ to get $\lambda^j$ and then at least one additional block down into the third row. This extra block must move distance at least $j$.
\end{proof}

\begin{prop}
For $\frac{n}{3} \leq i \leq \frac{n}{2} < j $, for the $\lambda$ with $\chi_{\lambda}(\alpha) - \chi_{\lambda}(\beta) \neq 0$, $\lambda = [n-j,...]$ when 
\[ \frac{\chi_{\lambda^i}(\tau)}{d_{\lambda^i}} - \frac{\chi_{\lambda}(\tau)}{d_{\lambda}} \geq  \frac{(j-i)(n/2-i+1) +i(j-n/2)}{{n \choose 2}}\]
\end{prop}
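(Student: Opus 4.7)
The plan is to recognize that the target bound is algebraically identical to the quantity already established (but not named) in the proof of Proposition \ref{jbig}. Direct expansion shows
\[ (j-i)(n/2 - i + 1) + i(j - n/2) = \tfrac{1}{2} j(n+2) - i(n-i+1), \]
since both sides equal $jn/2 + j - in + i^2 - i$. The proof of Proposition \ref{jbig} establishes precisely
\[ \binom{n}{2}\left(\frac{\chi_{\lambda^i}(\tau)}{d_{\lambda^i}} - \frac{\chi_\lambda(\tau)}{d_\lambda}\right) \geq \tfrac{1}{2} j(n+2) - i(n-i+1) \]
for any $\lambda = [n-j, \ldots]$ with $j > n/2$, before further simplifying to the weaker $(j-i)/(3n)$. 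So the present proposition is obtained by not passing to that weaker simplification.

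Concretely, I would first invoke Proposition \ref{tf} to identify $\binom{n}{2}(\chi_{\lambda^i}(\tau)/d_{\lambda^i} - \chi_\lambda(\tau)/d_\lambda)$ with the difference of the total box-travel distances from $\lambda$ and from $\lambda^i$ up to the single-row partition $(n)$. Direct counting gives distance$((n) \to \lambda^i) = i(n-i+1)$, since each of the $i$ boxes in the second row of $\lambda^i$ must move up one row and rightward the appropriate column difference, summing to $i(n-i+1)$. For the $\lambda$ side, the majorization upper bound on $\chi_\lambda(\tau)/d_\lambda$ over all $\lambda = [n-j,\ldots]$ is realized by the stacked rectangle-with-remainder partition analyzed in Proposition \ref{jbig}, for which that proposition computes $\binom{n}{2}(1 - \chi_\lambda(\tau)/d_\lambda) \geq \tfrac{1}{2} j(n+2)$. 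Subtracting the two distances yields the claimed bound.

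The hypothesis that $\chi_\lambda(\alpha) - \chi_\lambda(\beta) \neq 0$ (i.e., that $\lambda$ is an $i$-cycle detector) is not actually used: restricting the class of $\lambda$ considered can only strengthen an upper bound on $\chi_\lambda(\tau)/d_\lambda$, so the majorization bound from Proposition \ref{jbig} applies uniformly. There is no real obstacle beyond the algebra; the entire content of the statement is a repackaging of the initial distance bound in Proposition \ref{jbig} into a form parallel to the previous proposition's $(j-i)(n/2-i+1) + j$ bound for the $j \leq n/2$ case, so that both regimes can be combined cleanly in the upcoming $i > n/3$ summation argument.
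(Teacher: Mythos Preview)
Your proposal is correct and takes essentially the same approach as the paper: both invoke the distance computation from Proposition \ref{jbig} to get $\binom{n}{2}(1 - \chi_\lambda(\tau)/d_\lambda) \geq \tfrac{1}{2}j(n+2)$, subtract the known value $i(n-i+1)$ for $\lambda^i$, and rewrite the result algebraically as $(j-i)(n/2-i+1)+i(j-n/2)$. Your additional observation that the $i$-cycle detector hypothesis is not actually needed is accurate and worth noting.
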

\begin{proof}
This follows from the distance calculated from moving from $\lambda^i$ to the $\lambda$ with $\lambda_1=n-j$ that is first in majorization order in \ref{jbig}. This gives the difference is at least:
\[ \frac{ \frac{1}{2}j(n+2) -i(n-i+1)}{{n \choose 2}} = \frac{(j-i)(\frac{n}{2} -i+1) + i(j - \frac{n}{2})}{{n \choose 2}} \]
\end{proof}

The bounds for $\frac{d_{\lambda^i}}{d_{\lambda}}$ and $\log(\chi_{\lambda}(\alpha) - \chi_{\lambda}(\beta)) $ still hold.  Further, \[\frac{d_{\lambda^i}}{d_{\lambda^j}} \geq e^{-(j-i)\log(\frac{n-i}{j+1})} \geq e^{-(j-i)\log(2)}\]  And $\log(\chi_{\lambda^j}(\alpha) - \chi_{\lambda^j}(\beta)) \leq (n-2i)_{j-i}2\frac{n-i}{i}$ since below the first row there are no options as to placement and only the trivial insertion has $2$ places. Now the cases are for each $k$ of $\lambda^{i,k}$ taking $c_{i,k} = 2^{-k}$, one each for $\lambda^j$ for $i<j\leq n/2$ with $c_{j'} = 2^{-(2+(j-i))}$, and finally the remaining $\lambda$ with $\lambda_1 = n-j$ with $c_j = 2^{-(2+j-i)}$. Note, only $j \leq n- i/2$ need to be considered since an $i$ cycle needs to be trivially insertable. Putting it all together,

\begin{lem} 
For $t \geq 4.14n^2 + 6n\log(n) + 6\log(2)n$, $n/3 < i \leq j$, the $\lambda^i$ term is greater by a factor of $\frac{1}{c_j}$ than the $\lambda$ term with $\lambda = [n-j,...]$.
\end{lem}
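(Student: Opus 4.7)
The plan is to mirror the strategy of the $i \leq n/3$ lemma: split the sum according to the length $n-j$ of the first row of $\lambda$, and for each partition appearing in the Fourier expansion of $P^{*t}(\alpha) - P^{*t}(\beta)$, verify the master inequality
\[ c \cdot \left| \frac{\chi_{\lambda^i}(\alpha) - \chi_{\lambda^i}(\beta)}{\chi_{\lambda}(\alpha) - \chi_{\lambda}(\beta)} \right| \cdot \frac{d_{\lambda^i}}{d_\lambda} \cdot \left| \frac{\chi_{\lambda^i}(\tau)/d_{\lambda^i}}{\chi_\lambda(\tau)/d_\lambda} \right|^t > 1, \]
with $c$ the scaling constant assigned to that partition just before the lemma statement. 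Three families arise: the hook-modified detectors $\lambda^{i,k}$, the two-row detectors $\lambda^j$ with $i < j \leq n/2$, and the remaining $\lambda$ with $\lambda_1 = n-j$ for $i < j \leq n - i/2$ (the cutoff beyond which $\lambda$ fails to be an $i$-cycle detector). Since the $c$'s form a geometric series in $j - i$ their total is below one, so the per-partition inequalities combine to the desired domination, and by the conjugate symmetry noted earlier the partitions with $h_{1,2} < h_{2,1}$ are controlled automatically.

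For the $\lambda^{i,k}$ family apply Proposition \ref{ik} with the gap bound $ik/\binom{n}{2}$; the dimension ratio is at most $\binom{i-1}{k}$ and the character differences agree up to sign, so after taking logarithms the requirement reads $t \cdot ik/\binom{n}{2} \geq k\log 2 + \log\binom{i-1}{k} + O(1)$, which for $i > n/3$ is met well inside the stated budget. For the $\lambda^j$ family use the gap $(j-i)(n/2 - i + 1)/\binom{n}{2}$, the dimension bound $d_{\lambda^i}/d_{\lambda^j} \geq e^{-(j-i)\log 2}$, and the character-difference bound $\chi_{\lambda^j}(\alpha) - \chi_{\lambda^j}(\beta) \leq 2(n-2i)_{j-i}(n-i)/i$ noted above the lemma. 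For the third family, split at $j = n/2$ to pick up either the $+j$ or $+i(j-n/2)$ correction to the gap, and invoke the earlier bounds $\sum_\lambda d_\lambda \leq \binom{n}{j}\binom{j}{a,b} e_2(j-i)$ and $\chi_\lambda(\alpha) - \chi_\lambda(\beta) \leq (n-2i)_{j-i}(j-i+1)^2 (n-i)/i$. After logarithms each case reduces to a statement of the form $t \cdot \mathrm{gap} \geq (j-i) \cdot O(\log n) + O(\log n)$, handled by direct manipulation in the style of the $i \leq n/3$ calculation.

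The main obstacle is the corner case $i = n/2 - 1$, $j = n/2$, where the character-ratio gap attains its worst value $2/\binom{n}{2}$ while the dimension and character-difference factors are simultaneously of order one. Here each of the three families individually forces a requirement of roughly $\binom{n}{2}\log 2 \approx \tfrac{n^2}{2}\log 2$ steps, and their combined contribution is exactly what produces the constant $4.14 \approx 6\log 2$ in $4.14\,n^2$. Away from this corner the gap $(j-i)(n/2 - i + 1)$ grows jointly in $j - i$ and in $n/2 - i$, enough to absorb the polynomial-in-$n$ penalties from the dimension sum and the character-difference bounds, leaving only the logarithmic slack covered by the $6n\log n + 6\log(2)n$ correction terms. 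The delicate bookkeeping is choosing $c_{i,k}$, $c_{j'}$, and $c_j$ so that the geometric totals stay below one while the boundary requirement at $i = n/2 - 1$ still closes; that is where the specific numerical constant is pinned down.
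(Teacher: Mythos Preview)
Your decomposition into the three families $\lambda^{i,k}$, $\lambda^j$, and the remaining $\lambda$ with $\lambda_1 = n-j$ (the latter split at $j = n/2$), together with the specific gap, dimension, and character-difference bounds you cite, is exactly the route the paper takes; the proof really is the direct logarithmic manipulation you describe.

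One correction to your commentary: the constant $4.14$ does \emph{not} arise from the corner $i = n/2 - 1$, $j = n/2$ in the $\lambda^j$ family, nor is it $6\log 2$. In the paper's calculation the binding constraint is the $j > n/2$ sub-case of the third family, where the requirement becomes $t \geq n^2\bigl(\log 24 + \tfrac{5+2e}{4e}\bigr) \approx 4.14\,n^2$; the $\lambda^j$ corner only forces roughly $t \geq n^2\bigl(2\tfrac{\log(n-2i)+2}{n-2i+2} + 2\bigr)$, which stays below $4.14n^2$. So your identification of the bottleneck is off, though this does not affect the validity of the overall scheme.
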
  
\begin{proof}
Three cases of $ \lambda^{i,k}$, $\lambda^j$, and $\lambda$.  

For $\lambda^{i,k}$, $t \geq \frac{\log(2i)}{i} n(n-1)$:
\begin{align*} c_i \left| \frac{\chi_{\lambda^i}(\alpha) - \chi_{\lambda^i}(\beta)}{\chi_{\lambda^{i,k}}(\alpha) - \chi_{\lambda^{i,k}}(\beta)} \right| \frac{d_{\lambda^i}}{d_{\lambda^{i,k}}} \left| \frac{\frac{\chi_{\lambda^i}(\tau)}{d_{\lambda^i}}}{\frac{\chi_{\lambda^{i,k}}(\tau)}{d_{\lambda^{i,k}}}}  \right|^t & \geq e^{-k\ln(2) - k\log(i) + t\frac{ik}{n(n-1)}} \\
& \geq e^{ - k\log(2i) + k\log(2i)} \\
& = 1 \end{align*}
So $t \geq 3n\log n$ is sufficient for any $i$.

For $\lambda^j$, $t \geq n^2\left(2\frac{\log(n-2i) + 2}{n-2i + 2} + 2\right) $
\begin{align*} c_{j'} \left| \frac{\chi_{\lambda^i}(\alpha) - \chi_{\lambda^i}(\beta)}{\chi_{\lambda^j}(\alpha) - \chi_{\lambda^j}(\beta)} \right| \frac{d_{\lambda^i}}{d_{\lambda^j}} \left| \frac{\frac{\chi_{\lambda^i}(\tau)}{d_{\lambda^i}}}{\frac{\chi_{\lambda^j}(\tau)}{d_{\lambda^j}}}  \right|^t & \geq e^{-(j-i+2)\log(2) - (j-i)\log(n-2i) - 2 - t\frac{(j-i)(n/2 - i +1)}{n(n-1)}} \\
& \geq e^{-(j-i)\log(2(n-2i)) - (2 + 2\log(2)) + (j-i)\log(2(n-2i)) + 2(j-i)(n/2 -i +1) } \\
& \geq 1 \end{align*}

For $\lambda = [n-j,...] \neq \lambda^j$, when $j \leq n/2$ for $t \geq \max{\left(n^2\left(2\log(2) + \frac{1}{2} + \frac{1}{e}\right),\log(3)n^2 + 9n\log(n)\right)}$:
\begin{align*} &c_j \left| \frac{\chi_{\lambda^i}(\alpha) - \chi_{\lambda^i}(\beta)}{\chi_{\lambda}(\alpha) - \chi_{\lambda}(\beta)} \right| \frac{d_{\lambda^i}}{d_{\lambda}} \left| \frac{\frac{\chi_{\lambda^i}(\tau)}{d_{\lambda^i}}}{\frac{\chi_\lambda(\tau)}{d_\lambda}}  \right|^t \\
& \geq 2^{-(j-i+1)}(n-2i)_{j-i}^{-1}(j-i+1)^2\frac{i}{n(n-i)}{n \choose i}{n \choose j}^{-1}3^{-j}e_2(j-i)^{-1} \\
& \left(1 + \frac{(j-i)(n/2-i+1) + j}{{n \choose 2}}\right)^t \\
& \geq e^{-\left((j-i+2)\log(2) + (j-i)\log(n-2i) + 2\log(j-i+1) + \log(n(n-i)/i) + (j-i)\log(\frac{n-i}{j}) + j\log(3) + \frac{j-i}{2}(\log(j-i)-1) + \sqrt{j-i} \right)}\\
& e^{t{(j-i)(n/2-i+1) +j}{{n(n-1)}}} \\
& \geq e^{-\left((j-i)\left(\log(2) + \log(n-2i) +  \log\frac{n-i}{j} + \frac{1}{2}\log(j-i) - \frac{1}{2} + 1\right) + 2\log(j-i+1) + 2\log(2) + \log(2n)+ j\log(3) \right)} \\
& e^{ t\frac{(j-i)(n/2-i+1)+j}{n(n-1)}}\\
& \geq e^{-\left((j-i)(n/2-i+1) \left(\ln(2) + \frac{\log(n-2i)}{2(n-2i+1)} + \log(2) + \frac{\log(n/2-i)}{n/2-i+1} + \frac{1}{2}\right) + j\left(\frac{2\log(j-i+1}{j}+ \frac{3\log2}{j} + \frac{\log(j)}{j} + \log(3)\right)\right)} \\
& e^{ t\frac{(j-i)(n/2-i+1)+j}{n(n-1)}} \\
& \geq e^{-\left((j-i)(n/2-i+1)\left(2\log(2) + \frac{1}{2} + \frac{1}{e}\right) + j\left(\frac{9\log(n)}{n} + \log(3)\right) \right) + t\frac{(j-i)(n/2-i+1)+j}{n(n-1)}} \\
& \geq 1 \end{align*}
This is at worst $t>2.26n^2 + 9n\log(n)$.

For $\lambda = [n-j,...] \neq \lambda^j$, when $j > n/2$, for \[t \geq \max{n^2\left(\log(24)+\frac{5+2e}{4e}\right), \log(3)n^2 +6n\log(n) + 6\log(2)n} \]
\begin{align*} &c_j \left| \frac{\chi_{\lambda^i}(\alpha) - \chi_{\lambda^i}(\beta)}{\chi_{\lambda}(\alpha) - \chi_{\lambda}(\beta)} \right| \frac{d_{\lambda^i}}{d_{\lambda}} \left| \frac{\frac{\chi_{\lambda^i}(\tau)}{d_{\lambda^i}}}{\frac{\chi_\lambda(\tau)}{d_\lambda}}  \right|^t \\
& \geq 2^{-(j-i+2)}(n-2i)_{j-i}^{-1}(j-i+1)^2\frac{i}{n(n-i)}3^{-j}e_2(j-i)^{-1}{n \choose i}{n \choose j}^{-1} \\
&\left(1 + \frac{(j-i)(n/2-i+1) + i(j-n/2)}{{n \choose 2}}\right)^t \\
& \geq e^{-\left((j-i+2)\log(2) - (j-i)\log(n-2i) + 2\log(j-i+1) + \log\frac{n(n-i)}{i} + j\log3 + 1/2(j-i)(\log(j-i)-1) + \sqrt{j-i} + (n-i-j)\log2 \right)} \\ 
& e^{ t\frac{(j-i)(n/2-i+1) + i(j-n/2)}{{n(n-1)}}} \\
& \geq e^{-(j-i)\left(\log(2)+\log(n-2i) + \frac{j-i+1}{j-i} + \log(3) + \frac{n-2i}{4}\log(j-i) -\frac{1}{2} + \frac{1}{\sqrt{j-i}}\right) - (n-2i)\log(2)- \frac{1}{2}(j-\frac{n}{2})\log(j-i) - i \log(3)- \log(8n)} \\
&e^{t\frac{(j-i)(n/2-i+1) + i(j-n/2)}{{n(n-1)}}} \\
& \geq e^{-(j-i)(n-2i+1)\left(\log(2) + \frac{n-2i}{n-2i+1} + \log(2) + \log(3) + \frac{1}{4e} - \frac{1}{2} + 1 + \log(2) \right) - i(j - \frac{n}{2})\left( \frac{j-i}{i} + \log(3) + \frac{\log(8n)}{i} \right)} \\
& e^{t\frac{(j-i)(n/2-i+1) + i(j-n/2)}{{n(n-1)}}} \\
& \geq e^{-(j-i)(n-2i+1)\left(ln(24) + \frac{5+2e}{4e}\right) - i(j-n/2)\left(\log(3) + \frac{6\log(n)}{n} + \frac{6\log(2)}{n}\right) + t\frac{(j-i)(n/2-i+1) + i(j-n/2)}{{n(n-1)}}}\\
& \geq 1 \end{align*}
\end{proof}

The lemmas for $i<n/3$ and $\lambda^{i,k}$, $i<n/3, j>i$, and $i > n/3$ together give that:

\begin{thm}
After taking \[t> \max\left(4.14n^2 + 6n\log(n) + 6\log(2)n, 9, 2\log(2)n^2 + 24n\log(n) +36n \right)\] steps, cycle lexicographical order holds between all conjugacy classes for the transposition walk. 
\end{thm}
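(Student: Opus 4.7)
The plan is to combine the two main lemmas already established (the one covering $i \leq n/3$ together with the $\lambda^{i,k}$ lemma, and the one covering $i > n/3$) into a single estimate showing $P^{*t}(\alpha) - P^{*t}(\beta) > 0$ whenever $\alpha >_{CL} \beta$ and both have the same parity as $t$. Starting from the Fourier inversion formula of Proposition \ref{bigformula} specialized to the transposition walk, the theorem on cycle detectors restricts the sum to partitions $\lambda$ with $h_{2,1}, h_{1,2} \geq i$, where $i = \min_k(a_k \neq b_k)$. The conjugation symmetry $\chi_\lambda(\alpha) = \mathrm{sgn}(\alpha)\chi_{\lambda'}(\alpha)$ together with $\chi_\lambda(\tau)/d_\lambda = -\chi_{\lambda'}(\tau)/d_{\lambda'}$ pairs each $\lambda$ with its conjugate, so I only need to handle partitions with $h_{2,1} \leq h_{1,2}$ and double the estimate.

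For the designated lead partition $\lambda^i$, the earlier propositions give an explicit closed form for its contribution, which is positive and of order $(a_i - b_i)\binom{n}{i} \bigl(1 - i(n-i+1)/\binom{n}{2}\bigr)^t$. The strategy is then to bound every competing $\lambda$-term by a small multiple $c_j$ of this lead term, where $\lambda_1 = n-j$, and to choose the $c_j$ so that $\sum_{j \geq i} c_j \leq 1$. Following the paper's bookkeeping, the choice $c_i = \tfrac{1}{2}$ absorbs the full contribution of the $\lambda^{i,k}$ family (handled by the $\lambda^{i,k}$ lemma, which requires $t \geq n^2(\log(i-1)+1)/i$, comfortably inside the theorem's bound), while $c_j = 2^{-(j-i+1)}$ for $j > i$ controls everything else. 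These weights form a geometric series summing to less than one, so adding all the bounds together still leaves a strictly positive remainder from the $\lambda^i$ (plus $(\lambda^i)'$) contribution.

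The final step is to verify that the stated $t$ exceeds every threshold appearing in the component lemmas simultaneously. For $i \leq n/3$, the $\lambda^{i,k}$ lemma needs $t \geq n^2(\log(i-1)+1)/i$ and the $[n-j,\ldots]$ lemma needs $t \geq 2\log(2)n^2 + 24n\log(n) + 36n$ (which also requires $n \geq 9$, explaining the $\max$ with $9$). For $i > n/3$, the three sub-cases of the corresponding lemma need $t \geq 4.14n^2 + 6n\log(n) + 6\log(2)n$. Taking the maximum over all three expressions gives the hypothesis of the theorem, so the sign of $P^{*t}(\alpha) - P^{*t}(\beta)$ is dictated by the $\lambda^i$ term. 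By the proposition identifying that term with $(a_i - b_i)(1 + \mathrm{sgn}(\alpha)(-1)^t) d_{\lambda^i}(\chi_{\lambda^i}(\tau)/d_{\lambda^i})^t$, this is positive exactly when $a_i > b_i$, i.e. when $\alpha >_{CL} \beta$.

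The main obstacle is purely bookkeeping: ensuring that the worst-case $(i,j)$ pair in each regime still satisfies the scaling inequality with the chosen $c_j$. The trickiest subcase is $j > n/2$ with $i$ near $n/3$, where the bound on $\sum_\lambda d_\lambda$ grows rapidly (involving the telephone number $e_2(j-i)$ and the trinomial $\binom{j}{i/2,i/2,j-i}$) and where the character-ratio gap $\tfrac{(j-i)(n/2 - i+1) + i(j-n/2)}{\binom{n}{2}}$ must be leveraged carefully to extract the dominant $n^2$-scale coefficient $4.14$ in the final bound; since the component lemmas already do this calculation, assembling them into the theorem is essentially a verification that the union of required $t$-thresholds equals the stated maximum.
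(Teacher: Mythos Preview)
Your proposal is correct and follows exactly the paper's own approach: the paper's proof is the single sentence ``The lemmas for $i<n/3$ and $\lambda^{i,k}$, $i<n/3, j>i$, and $i > n/3$ together give that,'' and you have spelled out precisely this assembly of the preceding lemmas via the geometric weights $c_j$ and the conjugation symmetry. One small quibble: the ``$9$'' in the $\max$ is a condition on $t$, not on $n$, so attributing it to the hypothesis $n\geq 9$ of the $i\leq n/3$ lemma is not quite right (the paper is itself silent on what the $9$ is doing, and for any $n$ where the walk is interesting the other two terms dominate it anyway).
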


\subsection{The Lazy Version}\label{lazy}

If the walk is modified to be lazy, all the above results hold with a slight modification to the bound for sufficient time.  If during a step the chain stays with probability $p$, and moves via a transposition with probability $1-p$, the probability function becomes as from proposition \ref{bigformula}:

\[P^{*t}(\alpha) = \sum_{\lambda} \chi_{\lambda}(\alpha) d_{\lambda} \left(p + (1-p) \frac{\chi_{\lambda}(\tau)}{d_\lambda} \right)^t \]

The lead $i$-cycle detector is now solely $\lambda^i$ instead of jointly $\lambda^i$ and $(\lambda^i)'$.  This removes the sign constraint, and CL order holds over all permutations after sufficient time.

This modifies only the portion of the bound for the exponential term, but adds back in the need to handle the conjugates previously excluded by symmetry. The new mixing time will be $\frac{1}{p(1-p)}$ times the previous bounds.  There are three cases to handle.  For the partitions with positive character ratios at the transposition, $\frac{1}{1-p}$ times the old time bound suffices 

\[ \left( p + (1-p) \frac{\chi_{\lambda^i}(\tau)}{d_{\lambda^i}}\right) - \left(p + (1-p) \frac{\chi_{\lambda}(\tau)}{d_\lambda} \right) = (1-p)\left( \frac{\chi_{\lambda^i}(\tau)}{d_{\lambda^i}} - \frac{\chi_{\lambda}(\tau)}{d_\lambda} \right) \]

For those with negative character ratios at the transposition, there are two cases, for whether the new character ratio is negative or positive. Either:

\begin{align*} \left( p + (1-p) \frac{\chi_{\lambda^i}(\tau)}{d_{\lambda^i}}\right) - \left|\left(p + (1-p) \frac{\chi_{\lambda'}(\tau)}{d_{\lambda'}} \right)\right| &= \left( p + (1-p) \frac{\chi_{\lambda^i}(\tau)}{d_{\lambda^i}}\right) - \left(p -(1-p) \frac{\chi_{\lambda'}(\tau)}{d_{\lambda'}} \right) \\
&= (1-p)\left(\frac{\chi_{\lambda^i}(\tau)}{d_{\lambda^i}} + \frac{\chi_{\lambda}(\tau)}{d_\lambda}\right) \end{align*}

\begin{align*} \left( p + (1-p) \frac{\chi_{\lambda^i}(\tau)}{d_{\lambda^i}}\right) - \left|\left(p + (1-p) \frac{\chi_{\lambda'}(\tau)}{d_{\lambda'}} \right)\right| &= \left( p + (1-p) \frac{\chi_{\lambda^i}(\tau)}{d_{\lambda^i}}\right) - \left((1-p) \frac{\chi_{\lambda'}(\tau)}{d_{\lambda'} -p} \right) \\
&= 2p + (1-p)\left(\frac{\chi_{\lambda^i}(\tau)}{d_{\lambda^i}} - \frac{\chi_{\lambda}(\tau)}{d_\lambda} \right) \end{align*}

Both work with the general bounds from before. Halfing $c_j$ to account for double the terms has a trivial impact on the time bounds.

\subsection{More or Less Likely than Stationary}\label{uniform}

This style of analysis can also be insightful into total variation distance as well as separation distance.  One of the equivalent definitions of total variation distance between the walk at time $t$ and stationary is \[||P^{*t} - \pi||_{TV} = \sum_{\sigma \in S_n, P^{*t}(\sigma) \geq \pi(\sigma)} [P^{*t}(\sigma) - \pi(\sigma)] \] So knowing the permutations that are more likely than stationary leads to being able to calculate the total variation distance for each of the variants on the transposition walk discussed: non-lazy at even times, non-lazy at odd times, and lazy.  The stationary distributions are respectively uniform over even permutations, odd permutations, and all permutations.  The stationary distribution is seen in the Fourier inversion formula in the $[n],[1^n]$ terms.  After sufficient time, the sign of the next non-zero term determines whether the Fourier inversion formula evaluated at a permutation will be more or less likely than uniform.  

The next non-zero term will be the first of $[n-1,1],[n-2,2],[n-2,1,1]$ with non-zero character at $\alpha$.  

\[\begin{array}{c|c|c|c|c|c|c} \lambda & \chi_{\lambda}(\alpha) & a_1>1 & a_1 =0 & a_1 = 1, a_2 >1 & a_1 = 1, a_2 = 0 & a_1 = 1, a_2 = 1 \\ \hline
n-1,1 & a_1 -1 & >0 & <0 & 0 & 0 & 0 \\
n-2,2 & a_2 + {a_1 \choose 2} -a_1 & & & >0 & <0 & 0 \\
n-2,1,1 & -a_2 + {a_1 \choose 2} - a_1 + 1 & &&&& <0 \end{array} \]

So after sufficient time, a permutation is more likely than uniform if it has at least two fixed points, or one fixed point and at least two $2$-cycles.  And a permutation is less likely than uniform if it has no fixed points or one fixed point and at most one $2$-cycle.  Now to find a time when this holds.  Unlike the whole order holding, most of the order will be of the same order as mixing but with the wrong constant.  

This will be similar to the above analysis but with only $\lambda^1,\lambda^2, \lambda^{2,1}$ dominating, and with Proposition \ref{fformula} not Proposition \ref{bigformula} $\chi_\lambda(\alpha)$ replacing $\chi_{\lambda}(\alpha) - \chi_{\lambda}(\beta)$).  Paralleling the method from before.  First, there is only one case of $\lambda^{i,k}$ to handle.  Namely comparing $\lambda^2$ to $\lambda^{2,1}$, where the former is primary and latter term non-zero in the case that $a_1 =1, a_2>1$.  This is the only case where the order can only be shown for $t> n^2$.  The rest of the order holds for $8n\log(n)$.  

\begin{prop}
For $t \geq n^2$ in the case that $a_1 = 1$, $a_2 >1$, \[\left|\chi_{\lambda^{2}}(\alpha)d_{\lambda^2} \left(\frac{\chi_{\lambda^2}(\tau)}{d_{\lambda^{2}}}\right)^t \right|> 2 \left|\chi_{\lambda^{2,1}}(\alpha)d_{\lambda^{2,1}} \left(\frac{\chi_{\lambda^{2,1}}(\tau)}{d_{\lambda^{2,1}}}\right)^t\right| \]
\end{prop}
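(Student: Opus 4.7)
The plan is to compute each factor in the target inequality explicitly using Proposition \ref{ik} and the character-polynomial identifications worked out earlier, and then reduce everything to an elementary inequality in $n$ and $t$. Under the hypothesis $a_1=1$ and $a_2>1$, the character polynomial identity $\chi_{\lambda^2}(\alpha)=q_{[2]}(a_1,a_2)=a_2+\binom{a_1}{2}-a_1$ gives $\chi_{\lambda^2}(\alpha)=a_2-1$, while the table at the start of this subsection gives $\chi_{\lambda^{2,1}}(\alpha)=-a_2$. Since $a_2\geq 2$, the character-value ratio satisfies
\[\frac{|\chi_{\lambda^2}(\alpha)|}{|\chi_{\lambda^{2,1}}(\alpha)|}=\frac{a_2-1}{a_2}\geq\frac{1}{2}.\]
From Proposition \ref{ik}, the dimensions give
\[\frac{d_{\lambda^2}}{d_{\lambda^{2,1}}}=\frac{(n-3)n}{(n-1)(n-2)},\]
which is at least $5/6$ for $n\geq 5$ and tends to $1$ as $n\to\infty$.

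Second, the character ratios at a transposition supplied by Proposition \ref{ik} are $\chi_{\lambda^2}(\tau)/d_{\lambda^2}=1-4/n$ and $\chi_{\lambda^{2,1}}(\tau)/d_{\lambda^{2,1}}=1-4/(n-1)$, both positive for $n\geq 6$ (and the right-hand side of the target inequality vanishes for $n=5$, trivializing that case). A short simplification yields
\[\frac{1-4/n}{1-4/(n-1)}=1+\frac{4}{n(n-5)}.\]
Assembling all three ratios, the desired inequality is equivalent to
\[\frac{a_2-1}{2a_2}\cdot\frac{(n-3)n}{(n-1)(n-2)}\cdot\left(1+\frac{4}{n(n-5)}\right)^t>1.\]

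The first two factors together are bounded below by the absolute constant $\tfrac{1}{2}\cdot\tfrac{5}{6}\cdot\tfrac{1}{2}=\tfrac{5}{24}$ for all admissible $n$. For the exponential, I would apply the elementary inequality $\log(1+x)\geq x-x^2/2$ for $x\geq 0$ to obtain, at $t=n^2$ and $x=4/(n(n-5))$,
\[\left(1+\frac{4}{n(n-5)}\right)^{n^2}\geq\exp\!\left(\frac{4n}{n-5}-\frac{8}{(n-5)^2}\right),\]
whose right-hand side approaches $e^4\approx 54.6$ from above as $n\to\infty$ and is considerably larger for small $n$. Since $\tfrac{5}{24}\cdot e^4>11$, the product comfortably exceeds $1$ for all $n$ above a very small threshold. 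The only real obstacle is the bookkeeping for the handful of small-$n$ cases in which $a_1=1,\,a_2\geq 2$ is compatible with $\sum ja_j=n$ (namely $n=5,7$ and a few others, which can be checked directly), since both the combinatorial prefactor and the exponential slack are smallest precisely when $a_2=2$ and $n$ is small; away from this corner every inequality has significant margin.
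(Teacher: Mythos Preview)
Your argument is correct and follows essentially the same route as the paper: compute $\chi_{\lambda^2}(\alpha)=a_2-1$ and $\chi_{\lambda^{2,1}}(\alpha)=-a_2$, take the worst case $a_2=2$, insert the explicit dimensions and character ratios from Proposition~\ref{ik}, and finish with an exponential lower bound at $t=n^2$. The only cosmetic difference is that you keep the exact ratio $1+4/(n(n-5))$ for the character-ratio quotient, whereas the paper uses the slightly weaker (but still valid) lower bound $1+4/(n(n-1))$; both lead to a limiting factor of $e^4$ and the same conclusion.
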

\begin{proof}
This amounts to the equation:
\[|a_2 -1|\frac{n-3}{n-1}\left(1 - \frac{2(n-1)}{{n \choose 2}} \right)^{n^2} \geq 2|-a_2| \frac{n-2}{n}\left(1 - \frac{2(n)}{{n \choose 2}} \right)^{n^2}\]

In the worst case scenario of $a_2 =2$, the coeffcient on the right can be double that of the left.  This makes the extra time unavoidable by this method.

It suffices then to show that \[\frac{n(n-3)}{4(n-2)(n-1)}\left(1 + \frac{4}{n(n-1)} \right)^{n^2} \geq 1 \]

\[\frac{n(n-3)}{4(n-2)(n-1)}\left(1 + \frac{4}{n(n-1)} \right)^{n^2}  \geq e^{-\log(4) + \log(1 - \frac{2}{(n-2)(n-1)}) + n^2\frac{2}{n(n-1)}} \geq 1 \]

\end{proof}

For comparing $\rho = \lambda^1, \lambda^2, \lambda^{2,1}$ with all $\lambda = [n-j,...]$ with $3 \leq j \leq \frac{n}{2}$, bound the ratio of characters and ratio of dimension at $\alpha$ by the sum of the dimensions of the $\lambda$ with $\lambda=n-j$, $c_j = 2^{-j}$, and  $\frac{\chi_{\rho}(\tau)}{d_{\rho}} \geq  \frac{\chi_{\lambda{2,1}}(\tau)}{d_{\lambda^{2,1}}} = 1 - \frac{4}{n-1}$.  This means that \[\frac{\chi_{\lambda{2,1}}(\tau)}{d_{\lambda^{2,1}}} - \frac{\chi_{\lambda}(\tau)}{d_{\lambda}} \geq \frac{(j-2)(n-j-1) -2}{{n \choose 2}} \]

\begin{prop}
When $a_1 \neq 1$, the $\lambda^1$ term is larger than the $\lambda^{2}$ and $\lambda^{2,1}$ terms by a factor of $4$ by $t \geq 4n\log(n)$ for $n \geq 4$.
\end{prop}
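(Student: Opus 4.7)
The plan is to write the three relevant terms of Proposition \ref{fformula} explicitly and then bound the two ratios $|T_2/T_1|$ and $|T_{2,1}/T_1|$ by the same three-factor template used throughout this section: a ratio of character values at $\alpha$, a ratio of dimensions, and a $t$-th power of a ratio of character ratios at $\tau$. Combining the character polynomial evaluations from the table above with the dimension and character-ratio formulas for $\lambda^i$ and $\lambda^{i,k}$ (Proposition \ref{ik}), the terms are
\begin{align*}
T_1 &= (a_1-1)(n-1)\Bigl(1-\tfrac{2}{n-1}\Bigr)^t,\\
T_2 &= \Bigl(a_2+\tbinom{a_1}{2}-a_1\Bigr)\tbinom{n}{2}\tfrac{n-3}{n-1}\Bigl(1-\tfrac{4}{n}\Bigr)^t,\\
T_{2,1} &= \Bigl(-a_2+\tbinom{a_1}{2}-a_1+1\Bigr)\tfrac{(n-1)(n-2)}{2}\Bigl(1-\tfrac{4}{n-1}\Bigr)^t.
\end{align*}

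First I would invoke the hypothesis $a_1\neq 1$ to get $|a_1-1|\geq 1$, so $|T_1|\geq(n-1)(1-2/(n-1))^t$. Then I would crudely bound the character values in the numerators by $|\chi_{\lambda^2}(\alpha)|,|\chi_{\lambda^{2,1}}(\alpha)|\leq\binom{a_1}{2}+a_1+a_2+1\leq n^2$ and the dimensions by $\binom{n}{2}$, so that the polynomial portion of each of $|T_2/T_1|$ and $|T_{2,1}/T_1|$ is bounded by a small constant times $n^3$.

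The central step is the exponential comparison. Direct algebra gives
\[\frac{1-4/n}{1-2/(n-1)}=1-\frac{2(n-2)}{n(n-3)}\leq 1-\tfrac{2}{n}\quad\text{and}\quad\frac{1-4/(n-1)}{1-2/(n-1)}=\frac{n-5}{n-3}\leq 1-\tfrac{2}{n-3},\]
so by $(1-x)^t\leq e^{-xt}$ each exponential factor is bounded by essentially $e^{-2t/n}$. At $t=4n\log n$ this is at most $n^{-8}$, which beats the $n^3$ polynomial contribution by $n^{-5}$ and is comfortably below $1/4$ once $n\geq 4$.

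The main obstacle is the small-$n$ behavior, specifically $n=4$, where $\lambda^{2,1}=[2,1,1]$ coincides with $(\lambda^1)'$: the character ratios at $\tau$ then differ only by a sign, so the exponential factor for $|T_{2,1}/T_1|$ has magnitude $1$ rather than decaying. For this edge case I would handle the claim by direct inspection of the character table of $S_4$ (noting in particular that the sign pattern across $a_1$ parities causes the relevant sums to shrink rather than grow); for every $n\geq 5$ the asymptotic estimate above goes through cleanly and the rest is bookkeeping.
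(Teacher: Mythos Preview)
Your three-factor decomposition is exactly the paper's approach. The paper bounds the character values by the dimension, $|\chi_{\lambda^{2,1}}(\alpha)|\le d_{\lambda^{2,1}}=\binom{n-1}{2}$ and $|\chi_{\lambda^{2}}(\alpha)|\le d_{\lambda^{2}}=\tfrac{n(n-3)}{2}$, rather than your cruder $n^2$; and it writes the exponential ratios as $\bigl(1+\tfrac{2}{n-1}\bigr)^t$ and $\bigl(1+\tfrac{2}{n-1}-\tfrac{2}{\binom{n}{2}}\bigr)^t$ before passing to $e^{t/(n-1)}$. The arithmetic then yields $e^{-3\log n+4\log n}\ge 1$ at $t=4n\log n$. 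Your bounds are looser but the mechanism and conclusion are identical.

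Your observation about $n=4$ is sharp and in fact exposes a flaw in the paper's own argument: for $n=4$ one has $\lambda^{2,1}=[2,1,1]=(\lambda^1)'$, so $|\chi_{\lambda^{2,1}}(\tau)/d_{\lambda^{2,1}}|=|\chi_{\lambda^1}(\tau)/d_{\lambda^1}|$ and the exponential ratio is identically $1$. The paper's claimed lower bound $(1+\tfrac{2}{n-1})^t$ is only valid for $n\ge 6$ (at $n=5$ the $\lambda^{2,1}$ term vanishes). Moreover, since $\chi_{[2,1,1]}(\alpha)=\mathrm{sgn}(\alpha)\chi_{[3,1]}(\alpha)$ and the dimensions coincide, one gets $|T_{2,1}|=|T_1|$ for \emph{every} $\alpha$ and every $t$ when $n=4$; the factor-of-$4$ separation is simply false there. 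Your proposed fix by ``direct inspection of the character table'' will therefore not rescue the stated inequality. What is actually going on at $n=4$ is the usual parity pairing of a partition with its conjugate, which the rest of the paper handles by symmetry rather than by domination; the proposition should really be read for $n\ge 5$ (or $n\ge 6$ if one insists on the specific exponential bound), and you should say so explicitly rather than promise a case check that cannot succeed.
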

\begin{proof}
 \begin{align*} c_j \left| \frac{\chi_{\lambda^1}(\alpha)}{\chi_{\lambda^{2,1}}(\alpha)} \right| \frac{d_{\lambda^1}}{d_{\lambda^{2,1}}} \left| \frac{\frac{\chi_{\lambda^1}(\tau)}{d_{\lambda^1}}}{\frac{\chi_{\lambda^{2,1}}(\tau)}{d_{\lambda^{2,1}}}} \right|^t &\geq \frac{1}{4} \frac{1}{{n-1 \choose 2}} \frac{n-1}{{n-1 \choose 2}}\left(1 + \frac{2}{n-1}\right)^t \\
& \geq e^{-\log((n-1)(n-2)^2) + t\frac{1}{n-1}} \\
& \geq e^{-3\log(n) + 4\log(n)} \\
& \geq 1 \\
\end{align*}

\begin{align*} c_j \left| \frac{\chi_{\lambda^1}(\alpha)}{\chi_{\lambda^{2}}(\alpha)} \right| \frac{d_{\lambda^1}}{d_{\lambda^{2}}} \left| \frac{\frac{\chi_{\lambda^1}(\tau)}{d_{\lambda^1}}}{\frac{\chi_{\lambda^{2}}(\tau)}{d_{\lambda^{2}}}} \right|^t &\geq \frac{1}{4} \frac{1}{\frac{n(n-3)}{2}} \frac{n-1}{\frac{n(n-3)}{2}}\left(1 + \frac{2}{n-1} - \frac{2}{{n \choose 2}}\right)^t \\
& \geq e^{-\log((n-3)(n)^2) + t(\frac{n-2}{n(n-1)})} \\
& \geq e^{-3\log(n) + 4\log(n)(1 - \frac{1}{n})} \\
& \geq 1 
\end{align*}

\end{proof}

\begin{prop}
$\rho$ as above will dominate all terms for partitions $\lambda = [n-j,...]$ $j \leq \frac{n}{2}$ for $t \geq 6n\log(n)+30\log(n)$
\end{prop}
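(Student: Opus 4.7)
The plan is to reuse the template from the $i < n/3$ portion of the transposition-walk analysis. Fix a conjugacy class $\alpha$, let $\rho \in \{\lambda^1, \lambda^2, \lambda^{2,1}\}$ be the first of these three with nonzero character at $\alpha$, and for each $j$ with $3 \leq j \leq n/2$ set $c_j = 2^{-j}$, so that $\sum_{j \geq 3} c_j < 1$. It then suffices to verify, for each such $j$, the per-slice inequality
\[
c_j \left|\frac{\chi_\rho(\alpha)}{\max_\lambda \chi_\lambda(\alpha)}\right| \frac{d_\rho}{\sum_\lambda d_\lambda} \left|\frac{\chi_\rho(\tau)/d_\rho}{\max_\lambda \chi_\lambda(\tau)/d_\lambda}\right|^t \geq 1,
\]
where $\lambda$ ranges over partitions with $\lambda_1 = n-j$. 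Summing over $j$ then controls the sum of all competing terms by the $\rho$ term.

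Four ingredients feed into this per-slice bound. First, $|\chi_\rho(\alpha)| \geq 1$: in each column of the earlier classification table, the corresponding $\rho$-character is a nonzero integer. Second, an upper bound $|\chi_\lambda(\alpha)| \leq (j+1)(n-2)_j$-type, obtained from Murnaghan--Nakayama by allowing at most $j+1$ starting positions in the first row for the trivial opening insertion and crudely counting the remaining subhook insertions (equivalently, the character polynomial $q_{[\lambda_2,\ldots]}$ has total degree $j$ in the $a_i$). Third, the earlier dimension bound applied at $i=1$ gives $\sum_\lambda d_\lambda \leq \binom{n}{j}\binom{j}{j/3,j/3,j/3} e_2(j-1)$, while $d_\rho$ is polynomial in $n$ (respectively $n-1$, $n(n-3)/2$, $(n-1)(n-2)/2$). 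Fourth, the character-ratio gap displayed just before the statement,
\[
\frac{\chi_\rho(\tau)}{d_\rho} - \frac{\chi_\lambda(\tau)}{d_\lambda} \geq \frac{(j-2)(n-j-1) - 2}{\binom{n}{2}},
\]
combined with $\log(1+x) \geq x/2$ for small $x$, gives the exponential factor at least $\exp\!\bigl(t(j-2)(n-j-1)/n^2\bigr)$.

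Taking logarithms and substituting, the algebraic cost is of order $j\log n + \tfrac{j}{2}\log j + O(\log n)$ while the exponential gain is $\Omega\bigl(t(j-2)(n-j-1)/n^2\bigr)$. Since $(j-2)(n-j-1) \geq n-4$ for all $3 \leq j \leq n/2$, the per-slice bound reduces to $t/n \gtrsim \log n + O(1)$, and the coefficient $6$ in front of $n\log n$ together with the additive $30\log n$ is tuned to cover the prefactors in all three choices of $\rho$.

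The main obstacle is balancing the two ends of the $j$-range simultaneously: the $j = 3$ slice has the weakest exponential contraction, of order $1/n$ per step, whereas $j \approx n/2$ carries the largest algebraic cost from the telephone-number factor $e_2(j-1)$, which grows like $(j/e)^{j/2}$. The arithmetic bookkeeping for these constants, analogous to the explicit per-case computations in the $i < n/3$ lemma, is the only nontrivial step; the overall structure of the argument is otherwise parallel.
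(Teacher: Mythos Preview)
Your overall plan---per-slice comparison with $c_j = 2^{-j}$, the displayed character-ratio gap, and then summing---matches the paper's. The difference is in how the algebraic prefactor is controlled. You propose to bound $|\chi_\lambda(\alpha)|$ and $\sum_\lambda d_\lambda$ separately, recycling the Murnaghan--Nakayama counting and the telephone-number dimension estimate from the $i<n/3$ analysis. The paper instead notes that here there is no difference $\chi_\lambda(\alpha)-\chi_\lambda(\beta)$ to exploit for cancellation, so the crude $|\chi_\lambda(\alpha)|\le d_\lambda$ is already the right move; it then consolidates everything into the single bound
\[
\sum_{\lambda:\,\lambda_1=n-j} |\chi_\lambda(\alpha)|\,d_\lambda \;\le\; \sum_{\lambda:\,\lambda_1=n-j} d_\lambda^2 \;\le\; \binom{n}{j}^{2} j!,
\]
which replaces your second and third ingredients at once and turns the remainder into a short calculation yielding the constants $6$ and $30$ directly (the worst slice is $j=3$). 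Your proposed character bound $|\chi_\lambda(\alpha)|\le (j+1)(n-2)_j$ is also not quite justified as written: the earlier Murnaghan--Nakayama count bounded a \emph{difference} of characters, where insertions with no nontrivial $\ge i$-cycle cancel between $\alpha$ and $\beta$; for a single $\chi_\lambda(\alpha)$ there is no such cancellation, so that counting argument does not go through. The cleanest repair is again $|\chi_\lambda(\alpha)|\le d_\lambda$, which brings you back to the paper's route.
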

\begin{proof}
Bound $\sum_{\lambda: \lambda_1 = n-j} \chi_{\lambda}(\alpha)d_\lambda$ by $\sum_{\lambda: \lambda_1=n-j} d_\lambda^2 \leq {n \choose j}^2j!$.
\begin{align*} c_j \left| \frac{\chi_{\rho}(\alpha)}{\chi_{\lambda}(\alpha)} \right| \frac{d_{\rho}}{d_{\lambda}} \left| \frac{\frac{\chi_{\rho}(\tau)}{d_{\rho}}}{\frac{\chi_\lambda(\tau)}{d_\lambda}} \right|^t  & \geq  2^{-j} \frac{{j!}}{(n)_j^2}\left(1 + \frac{(j-2)(n-j-1) -2}{{n \choose 2}}\right)^{t} \\
& \geq e^{-j\log2 -2j\log(n/\sqrt{j}) + t\frac{(j-2)(n/2-1) -2}{n(n-1)}} \\
& \geq e^{- 2j\log(n) + 2j\log(n)\left(\frac{1}{2}\frac{j-2}{j}\frac{n-j-1}{\log(n)}\right)\left(1 - \frac{2}{(j-2)(n-j-1)}\right)\frac{t}{n(n-1)}} \\
& \geq e^{2j\log(n)\left(\left(\frac{1}{2}\left(1- \frac{2}{j}\right)\frac{n-j-1}{n}\right)\left(1 - \frac{2}{(j-2)(n-j-1)}\right)\frac{t}{\log(n)(n-1)}-1\right)} \\
& \geq 1
\end{align*}
Since when $t \geq 6n\log(n)+30\log(n)$, 
\begin{align*} &\left(\left(\frac{1}{2}\left(1- \frac{2}{j}\right)\frac{n-j-1}{n}\right)\left(1 - \frac{2}{(j-2)(n-j-1)}\right)\frac{t}{\log(n)(n-1)}-1\right) \\
&\geq \left(3 + \frac{15}{n}\right)\left(\left(1- \frac{2}{j-2}\right)\frac{n-j-1}{n}\right)\left(1 - \frac{2}{(j-2)(n-j-1)}\right) \end{align*}
And this is minimized when $j=3$ giving \[ \left(3 + \frac{15}{n}\right)\frac{1}{3}\left(1 - \frac{3}{n-2}\right)\left(1 - \frac{2}{n-2}\right) \geq 0 \]

\end{proof}

\begin{prop}
$\rho$ as above will dominate all terms for partitions $\lambda = [n-j,...,]$ for $j \geq \frac{n}{2}$ for $t \geq 4n\log(n) + 2\log(2)n +20\log(n) + 50\log(2)$
\end{prop}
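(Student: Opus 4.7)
The plan is to mirror the preceding proposition's argument, replacing the direct character-ratio gap $(j-2)(n-j-1) - 2$ (which deteriorates and eventually becomes negative as $j$ grows past $n/2$) with the uniform lower bound from Proposition \ref{jbig}. I would retain the master inequality
\[ c_j \left| \frac{\chi_{\rho}(\alpha)}{\chi_{\lambda}(\alpha)} \right| \frac{d_{\rho}}{d_{\lambda}} \left| \frac{\chi_\rho(\tau)/d_\rho}{\chi_\lambda(\tau)/d_\lambda} \right|^t \geq 1 \]
with $c_j = 2^{-j}$, and the combinatorial bound $\sum_{\lambda: \lambda_1 = n-j} \chi_\lambda(\alpha) d_\lambda \leq \binom{n}{j}^2 j!$, obtained from $|\chi_\lambda(\alpha)| \leq d_\lambda$ together with $d_\lambda \leq \binom{n}{j} d_{[\lambda_2,\ldots]}$ summed over $\mu \vdash j$.

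For the character-ratio factor, Proposition \ref{jbig} applied with $i=1$ gives $\frac{\chi_{\lambda^1}(\tau)}{d_{\lambda^1}} - \frac{\chi_\lambda(\tau)}{d_\lambda} \geq \frac{j-1}{3n}$ for $j > n/2$. Since $\rho$ could be $\lambda^{2,1}$ in the worst case, whose character ratio is $\frac{2}{n-1}$ below that of $\lambda^1$, the usable gap is at least $\frac{j-1}{3n} - \frac{2}{n-1}$, which for $j \geq n/2$ and large $n$ is bounded below by roughly $1/6$. Taking logarithms in the master inequality reduces the task to
\[ t\left(\frac{j-1}{3n} - \frac{2}{n-1}\right) \geq j\log 2 + 2\log\binom{n}{j} + \log(j!) + O(\log n), \]
which via Stirling is established by the stated $t \geq 4n\log(n) + 2\log(2)n + 20\log(n) + 50\log(2)$.

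The main obstacle is constant-tracking in the regime where $j$ is close to $n$. There $\binom{n}{j}^2 j!$ grows like $n!$, so the right-hand side reaches $\Theta(n \log n)$, while the character-ratio gap has dropped to the essentially constant value $\approx 1/6$; one must verify that $t$ of order $n\log n$ with the claimed explicit constants still clears this, uniformly across $n/2 \leq j \leq n-2$. Two subtleties also deserve mention. The partition $[1^n]$ has $\lambda_1 = 1$ (so $j = n-1$) and $|\chi_{[1^n]}(\tau)/d_{[1^n]}| = 1$, which never decays; this is handled by observing that $[1^n]$ together with $[n]$ constitutes the stationary distribution $\pi$ already subtracted out in the analysis, so it is properly excluded. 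The conjugates of $\rho$, such as $(\lambda^1)' = [2,1^{n-2}]$, have character ratios equal in magnitude to $\rho$'s, so the $t$-factor contributes no decay against them; these are handled by grouping each partition with its conjugate into a parity-indexed block, so that at times when the $\rho$ block is non-zero the conjugates add in the same direction rather than competing.
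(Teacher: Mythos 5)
Your skeleton is the paper's: the same master inequality with $c_j=2^{-j}$, the same bound $\sum_{\lambda:\lambda_1=n-j}\chi_\lambda(\alpha)d_\lambda\le\binom{n}{j}^2 j!$, and the observation that for $j\ge n/2$ the character-ratio gap is of constant order so the exponential term wins outright. The one substantive divergence is how you bound that gap. The paper does not pass through the stated conclusion of Proposition \ref{jbig}; it reuses the raw distance computation inside its proof ($\ge\tfrac12 j(n+2)$ boxes-times-distance) and subtracts the $2n$ for $\lambda^{2,1}$ directly, getting a gap of $\frac{j(n/2-3)}{\binom{n}{2}}\approx j/n\ge\tfrac12$. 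Your detour through the packaged bound $\frac{j-1}{3n}$ minus $\frac{2}{n-1}$ costs a factor of about $3$: at the worst case $j=n/2$ your usable rate is $\log(1+\tfrac16)\approx 0.154$ against a right-hand side of roughly $\tfrac{n}{2}\log n+2n\log 2$, which forces $t\gtrsim 3n\log n+(3+6\log 2)n$ and slips under the stated $4n\log n+2\log(2)n$ only once $n$ is in the hundreds. So the constant-tracking you flag as the main obstacle is a real obstacle for your version; it evaporates if you use the unpackaged distance bound, since then the exponent is $j$ times a quantity independent of $j$ exactly as in the paper. Your two ``subtleties'' are genuine and the paper is silent on both: the $[n],[1^n]$ terms are precisely $\pi$ and are subtracted out, and the conjugate problem is real (the displayed gap fails outright for $[2,1^{n-2}]$ or $[2^3,1^{n-6}]$, whose character ratios have magnitude near $1$). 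The correct resolution is the conjugate-pairing convention set up in the transposition section: at times matching the parity of $\alpha$ each pair $\{\lambda,\lambda'\}$ contributes exactly twice the term of the representative with the longer first row, so one restricts the sum to those representatives and absorbs the factor $2$ into $c_j$ --- not quite your phrasing that the conjugates ``add in the same direction'' as $\rho$ (they add in the same direction as their own representative, which may be competing), but the mechanism you name is the right one.
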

\begin{proof}
The character ratio difference is of constant order in this case, so it becomes very simple.

\[\frac{\chi_{\lambda{2,1}}(\tau)}{d_{\lambda^{2,1}}} - \frac{\chi_{\lambda}(\tau)}{d_{\lambda}} \geq j\left(\frac{n}{2}-1\right) -4n \geq j\left(\frac{n}{2} - 3\right) \]

\begin{align*} c_j \left| \frac{\chi_{\lambda^i}(\alpha)}{\chi_{\lambda}(\alpha)} \right| \frac{d_{\lambda^i}}{d_{\lambda}} \left| \frac{\frac{\chi_{\lambda^i}(\tau)}{d_{\lambda^i}}}{\frac{\chi_\lambda(\tau)}{d_\lambda}} \right|^t  & \geq  e^{-j\log2 -2j\log(n) + t\frac{j(\frac{n}{2} -3)}{n(n-1})} \\
& \geq e^{j\left(-\left(2\log(n) + \log(2)\right) + 2n\log(n) \frac{(n-6)(n-1)}{(n-6)(n-1)} + \log(2)n\frac{(n-1)(n-6)}{(n-6)(n-1)}\right)} \\
&\geq 1 \end{align*}

\end{proof}

This means that around $6n\log(n)$ time (and likely earlier due to cancelation of terms), permutations with more than one fixed point are above stationarity, those with one fixed point and at least two $2$-cycles are somewhere around stationarity, and those with at one fixed point and $2$-cycle or no fixed points are below uniform.  This fits well with the lower bound for total variation of this walk being established using that fixed point free permutations are too far below uniform for $t < \frac{1}{2}n\log(n)$ ~\cite{DS}.

\section{Three Cycle Walk}\label{three}

The adaptation of the above methodology to the $3$-cycle walk will give similar results.  Notably, as opposed to the transposition walk, which has lead $i$-cycle detector $\lambda^i = [n-i,i]$, this walk has lead $i$-cycle detector $\rho^i = [n-i,1...,1]$.  As a result,  $(-1)^{i+1}CL$ will be the likelihood order.  The smallest possible difference between character ratios at the $3$-cycle being $O(1/n^3)$ (as compared to $O(1/n^2)$ for the transposition) will lead to a $O(n^3)$ upper bound for when the order holds.  

\subsection{Character Ratio Maximizing $i$-Cycle Detector}

From ~\cite{Ingram} $\frac{\chi_\lambda(3)}{d_\lambda} = \frac{M_3}{2(n)_3} - \frac{3}{3(n-2)}$ where \[M_3 = \sum_{j=1}^r(\lambda_j -j)(\lambda_j -j+1)(2\lambda_j - 2j +1) + j(j-1)(2j-1) \]  Each of these products are reminicent of the formula for a sum of squares \[ \sum_{l=1}^n l^2= \frac{1}{6} n(n+1)(2n+1)\] which leads to a combinatorial interpretation for the character ratio.  This will be written most simply in Frobenius notation $\lambda = (\alpha_1,...,\alpha_k | \beta_1,...,\beta_k)$ where the main diagonal of $\lambda$ consists of $k$ boxes $(1,1)$ through $(k,k)$ and for $1 \leq j \leq k$, $a_j = \lambda_j-j$, $b_j = \lambda'_j - j$.  

\begin{align*} M_3 &= \sum_{j=1}^r(\lambda_j -j)(\lambda_j-j+1)(2\lambda_j - 2j +1) + j(j-1)(2j-1) \\
&= \left[ \sum_{j=1}^k (\lambda_j -j)(\lambda_j -j +1)(2(\lambda_j -j) + 1) \right]+ \left[\sum_{j=1}^k (j-1)((j-1)+1)(2(j-1) + 1) \right]+ \\
&\left[\sum_{j=k+1}^r \left( (j-1)((j-1)+1)(2(j-1) + 1) - (j- \lambda_j)(j-\lambda_j-1)(2(j- \lambda_j) -1) \right) \right] \\
&= \sum_{j=1}^k (\alpha_j)(\alpha_j +1)(2\alpha_j +1) + \sum_{j=1}^k 6\sum_{l=1}^{j-1} l^2 + \sum_{j=k+1}^r 6\left(\sum_{l=1}^{j-1} l^2 - \sum_{l=1}^{j-\lambda_j-1} l^2 \right) \\
& = \sum_{j=1}^k (\alpha_j)(\alpha_j +1)(2\alpha_j +1) + \sum_{j=1}^k 6\sum_{l=1}^{j-1} l^2 + \sum_{j=k+1}^r 6\left(\sum_{l=j - \lambda_j}^{j-1} l^2 \right) \\
&= \sum_{j=1}^k (\alpha_j)(\alpha_j +1)(2\alpha_j +1) + \sum_{(i,j): j>i} 6(j-i)^2 \\
& = \sum_{j=1}^k (\alpha_j)(\alpha_j+1)(2\alpha_j+1) +  \sum_{j=1}^k\sum_{l=1}^{\beta_j} 6(l)^2\\
&= \sum_{j=1}^k \left( (\alpha_j)(\alpha_j+1)(2\alpha_j+1) + (\beta_j)(\beta_j+1)(2\beta_j+1) \right) \end{align*}

So the formula for the character ratio becomes: \[ \frac{\chi_\lambda(3)}{d_\lambda} = \frac{ \sum_{j=1}^k \left( (\alpha_j)(\alpha_j+1)(2\alpha_j+1) + (\beta_j)(\beta_j+1)(2\beta_j+1) \right)}{2(n)_3} - \frac{3/2}{(n-2)} \]  Where $M_3$ can be interpreted as six times the sum over boxes in the Ferrers diagram of the square of the distance from the box to a diagonal box in its row or column.  The next step is to find the $i$-cycle detector with largest character ratio.  This has two steps, showing $M_3$ is maximized amoung $i$-cycle detectors by $\rho^i = [n-i,1,...,1]$ and $\frac{\chi_{\rho^i}(3)}{d_{\rho^i}} \geq \frac{ 3/2}{n-2}$, so a partition minimizing the always positive $M_3$ cannot maximize the character ratio.

\begin{prop}
$\rho^i$ has the maximal $M_3$ over all $i$-cycle detectors for $i \leq \frac{n-1}{2}$ with $\lambda_1 \geq \lambda_1'$.
\end{prop}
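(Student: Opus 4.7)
My proof plan is to work throughout in Frobenius coordinates $\lambda = (\alpha_1,\ldots,\alpha_k \mid \beta_1,\ldots,\beta_k)$, using $M_3(\lambda) = \sum_{j=1}^k (f(\alpha_j) + f(\beta_j))$ with $f(x) := x(x+1)(2x+1)$. The algebraic fact driving everything is the superadditivity of $f$: direct expansion gives $f(a+b) - f(a) - f(b) = 6ab(a+b+1) \ge 0$, with equality iff $ab = 0$. This immediately implies $M_3(\mu) \le f(|\mu|-1)$ for any partition $\mu$, attained with equality exactly when $\mu$ is a hook. I will also use the recursion $M_3(\lambda) = f(\alpha_1) + f(\beta_1) + M_3(\bar\lambda)$, where $\bar\lambda$ of size $n-1-\alpha_1-\beta_1$ is obtained by stripping the first row and first column of $\lambda$, and the conjugation symmetry $M_3(\lambda) = M_3(\lambda^c)$ (since conjugation swaps the arm and leg multisets).

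The plan has two stages. \emph{Stage 1 (reduction to hooks).} Combining the recursion, the bound $M_3(\bar\lambda) \le f(|\bar\lambda|-1)$, and a second application of superadditivity $f(\alpha_1) + f(n-2-\alpha_1-\beta_1) \le f(n-2-\beta_1) \le f(n-1-\beta_1)$ yields the hookification inequality
\[
M_3(\lambda) \;\le\; f(\beta_1) + f(n-1-\beta_1) \;=\; M_3\bigl([n-\beta_1, 1^{\beta_1}]\bigr).
\]
When $\beta_1 \ge i$, the hook on the right already lies in $\Omega := \{i\text{-cycle detectors with }\lambda_1\ge\lambda_1'\}$: indeed $\alpha_1 \ge \beta_1$ and $\alpha_1 + \beta_1 \le n-1$ force $\beta_1 \le (n-1)/2$, and $i \le (n-1)/2$ then gives $\beta_1 \le n-i-1$, so both hook lengths of the hook are $\ge i$. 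When $\beta_1 < i$ but $\alpha_1 \ge i$, apply the same reduction to $\lambda^c$ (whose first leg equals $\alpha_1 \ge i$), and transfer the bound back via $M_3(\lambda) = M_3(\lambda^c)$. The residual sub-case $\alpha_1, \beta_1 < i$ forces $\lambda$ inside an $i \times i$ box (in particular $n \le i^2$); here a direct comparison using both constraints $h_{1,2} = \alpha_1+\beta_2+1 \ge i$ and $h_{2,1} = \alpha_2+\beta_1+1 \ge i$ controls the individual arms and legs sufficiently to push $f(A) + f(B) \le M_3(\rho^i)$ through, where $A = \sum \alpha_j$, $B = \sum \beta_j$.

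\emph{Stage 2 (hook optimization).} The hooks $[n-s, 1^s]$ lying in $\Omega$ are precisely those with $i \le s \le (n-1)/2$ (the lower bound is the detector condition on the leg, the upper bound is $\lambda_1 \ge \lambda_1'$). For such hooks $M_3 = f(n-1-s) + f(s)$, which is convex and symmetric about $s = (n-1)/2$, so on this interval it is maximized at the left endpoint $s = i$; this is exactly $M_3(\rho^i)$.

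\emph{Main obstacle.} The genuine difficulty is the residual sub-case $\alpha_1, \beta_1 < i$ in Stage 1: neither the superadditivity-based hookification nor the conjugation trick applies there, because the naive hook $[n-\beta_1, 1^{\beta_1}]$ is no longer an $i$-cycle detector and the conjugate still has small first arm. Handling it requires exploiting both detector constraints simultaneously to control the Frobenius coordinates inside the $i\times i$ box and then comparing $f(A) + f(B)$ termwise with the contents of $\rho^i$; this case only arises for $i \ge 3$ and $n \le i^2$, so the estimate need only be strong enough in this narrow regime.
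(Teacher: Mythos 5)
Your overall architecture --- collapse $M_3$ to a hook via superadditivity of $f(x)=x(x+1)(2x+1)$, then optimize $f(s)+f(n-1-s)$ over admissible hooks by convexity --- is a genuinely different route from the paper's proof, which instead reads $M_3$ as six times the sum over boxes of the squared distance to the diagonal and argues by moving boxes toward the first row and column. Two repairable issues: the parenthetical that $M_3(\mu)=f(|\mu|-1)$ holds exactly for hooks is wrong (for a hook $(a\mid b)$ with $ab>0$ one has $f(a)+f(b)<f(a+b)$; equality needs a single row or column), though only the inequality is used; and in the sub-case $\beta_1<i\le\alpha_1$ the hook $[n-\alpha_1,1^{\alpha_1}]$ you land on need not lie in $\Omega$, since $\alpha_1$ can exceed $(n-1)/2$ --- you must also record that $h_{2,1}\ge i$ forces at least $i$ boxes below the first row, hence $\alpha_1\le n-1-i$, and run the convexity argument on all of $[i,n-1-i]$.

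The genuine gap is the residual sub-case $\alpha_1,\beta_1<i$: the inequality you propose to push through, $f(A)+f(B)\le M_3(\rho^i)$ with $A=\sum\alpha_j$, $B=\sum\beta_j$, is false. Take $i=5$, $n=11$, $\lambda=[5,5,1]=(4,3\mid 2,0)$. Every hypothesis of the sub-case holds: $h_{1,2}=\alpha_1+\beta_2+1=5$, $h_{2,1}=\alpha_2+\beta_1+1=6$, $\lambda_1=5\ge\lambda_1'=3$, $\alpha_1=4<5$, $\beta_1=2<5$, and $i=(n-1)/2$. But $A=7$, $B=2$, so $f(A)+f(B)=840+30=870$, while $M_3(\rho^5)=2f(5)=660$. (The proposition itself is safe here: $M_3([5,5,1])=f(4)+f(3)+f(2)=294$.) The failure is structural, not a matter of sharpening constants: collapsing all arms into the single number $A$ manufactures a fictitious arm of length $A>i-1$, exactly the configuration the $i\times i$ box excludes, and the cubic growth of $f$ makes that loss fatal; no further control on $A$ and $B$ can rescue an inequality that is literally false. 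A repair within your framework is to stop the collapse one step earlier: for $k\ge 2$ the recursion plus the crude bound on $\bar\lambda$ gives $M_3(\lambda)\le f(\alpha_1)+f(\beta_1)+f(n-2-\alpha_1-\beta_1)$, and on the polytope $1\le\alpha_1,\beta_1\le i-1$, $\alpha_1+\beta_1\ge i$ (the lower bound following from $h_{1,2}\ge i$ and $\beta_2\le\beta_1-1$) this convex expression is maximized at a vertex, where it can be compared to $f(i)+f(n-1-i)$ directly. As written, however, the proposal does not prove the proposition.
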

\begin{proof}
As $M_3$ is clearly equal for $\lambda$ and $\lambda'$, its enough to examine $\lambda$ with $\lambda_1 \geq \lambda_1'$.  The first row and column are clearly the $M_3$ maximizing locations for boxes as these are farthest from a diagonal.  For an $i$-cycle detector, at least $i$ boxes must be below the first row.  With $\lambda_1 \geq \lambda_1'$, this means the first row must have $i+1$ boxes.  Then moving a box from the first row at distance $(n-i-1)$ to a diagonal to the first column at distance $i$ to a diagonal never increases the distance when $i \leq \frac{n-1}{2}$.  This means $\rho^i$ and $\rho^{n-i-1}$ maximize $M_3$.  
\end{proof}

\begin{prop}
$\frac{\chi_{\rho^i}(3)}{d_{\rho^i}} \geq \frac{ 3/2}{n-2}$ for $n \geq 5$.
\end{prop}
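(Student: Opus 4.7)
The plan is to reduce the inequality on character ratios to a purely algebraic bound on $M_3$, and then to use convexity to pin down the worst case in $i$.

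First, by the formula $\chi_\lambda(3)/d_\lambda = M_3/(2(n)_3) - 3/(2(n-2))$ derived just above, the desired inequality $\chi_{\rho^i}(3)/d_{\rho^i} \geq 3/(2(n-2))$ is equivalent to the polynomial bound
\[ M_3(\rho^i) \geq 6n(n-1). \]
The Frobenius coordinates of $\rho^i = [n-i,1^i]$ are $(n-i-1 \mid i)$ with a single diagonal box, so the combinatorial form of $M_3$ immediately gives
\[ M_3(\rho^i) = g(n-i-1) + g(i), \qquad g(x) := x(x+1)(2x+1). \]

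Second, I would exploit the convexity of $g$ on $[0,\infty)$ together with the constraint that the two arguments in this sum are always $n-1$ apart. Setting $h(a) := g(a) + g(n-1-a)$, the standard Jensen-type calculation (or simply checking $h'' > 0$) shows that $h$ is convex and symmetric about $a = (n-1)/2$, and is therefore minimized there. Combined with the hypothesis $i \leq (n-1)/2$ (which is already required in the previous proposition for $\rho^i$ to be an $i$-cycle detector with $\lambda_1 \geq \lambda_1'$), the minimum of $M_3(\rho^i)$ over integer $i$ in the admissible range is attained at $i = \lfloor (n-1)/2 \rfloor$. It therefore suffices to verify the bound at this single value of $i$.

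Third, the remaining step is an endpoint computation split by the parity of $n$. For $n$ odd, plugging in $i = (n-1)/2$ yields $M_3 = 2g((n-1)/2) = n(n-1)(n+1)/2$, reducing the desired bound to an elementary polynomial inequality in $n$. For $n$ even, plugging in $i = n/2 - 1$ yields, after a short expansion, $M_3 = g(n/2) + g(n/2-1) = n(n^2+2)/2$, giving another elementary inequality. The hypothesis $n \geq 5$ is precisely the range over which both endpoint checks succeed.

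The main obstacle is this endpoint verification: the convexity reduction is robust and does the heavy lifting for all non-extremal $i$, but because the right-hand side constant $3/(2(n-2))$ is essentially the tightest possible for this method, there is very little slack left at $i = \lfloor (n-1)/2 \rfloor$. Everything else---reading off the character ratio formula, writing $M_3$ in Frobenius coordinates, and using convexity to reduce to the worst case---follows the same pattern of analysis used elsewhere in the paper.
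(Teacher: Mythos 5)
Your reduction and worst-case analysis are correct and in substance identical to the paper's proof: the paper first simplifies the ratio to the closed form $1 - \frac{3i(n-i-1)}{(n-1)(n-2)}$ and then minimizes over $i$, which is your convexity argument in disguise (minimizing $g(i)+g(n-1-i)$ is the same as maximizing $i(n-i-1)$, extremal at $i=\lfloor (n-1)/2\rfloor$). Your Frobenius computation $M_3(\rho^i)=g(n-i-1)+g(i)$, the equivalence of the claim with $M_3(\rho^i)\geq 6n(n-1)$, and the endpoint values $n(n-1)(n+1)/2$ ($n$ odd) and $n(n^2+2)/2$ ($n$ even) are all right.

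The gap is exactly at the step you defer: the assertion that ``$n\geq 5$ is precisely the range over which both endpoint checks succeed'' is false, and since you identify the endpoint verification as the crux, this is where the proof breaks. For $n$ odd the check reads $\frac{n(n-1)(n+1)}{2}\geq 6n(n-1)$, i.e.\ $n\geq 11$; for $n$ even it reads $n^2-12n+14\geq 0$, i.e.\ $n\geq 12$. Equivalently, the minimal ratio $\frac14-\frac{3/4}{n-2}$ exceeds $\frac{3/2}{n-2}$ only when $n\geq 11$. The statement genuinely fails for small $n$ at these $i$: for $n=5$, $i=2$, the partition $\rho^2=[3,1,1]$ has no hook of length $3$, so $\chi_{\rho^2}(3)=0$ while $\frac{3/2}{n-2}=\frac12$ (even $i=1$ gives only $\frac14$); for $n=10$, $i=5$, the ratio is $\frac16<\frac{3}{16}$. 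You are in good company --- the paper's own proof commits the identical slip, computing the minimum as $\frac14-\frac{3/4}{n-2}$ and then claiming it beats $\frac{3/2}{n-2}$ for $n\geq 5$ --- but a correct write-up must either strengthen the hypothesis to $n\geq 11$, restrict the range of $i$, or weaken the constant on the right-hand side.
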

\begin{proof}
The contribution to $M_3$ fom the $i$ blocks in the tail is $6(1^2+ \cdots + i^2)$ since $\rho^i = (n-i,i)$.
\begin{align*} \frac{\chi_{\rho^i}(3)}{d_{\rho^i}} &= \frac{(n-i-1)(n-i)(2(i-1)+1) + (i)(i+1)(2i+1)}{2(n)_3} - \frac{3/2}{n-2} \\
& = 1 - \frac{3i(n-i-1)}{(n-1)(n-2)} \end{align*}

This is minimized when $i$ is maximized at $\frac{n-1}{2}$ where the character ratio is $1 - \frac{3(\frac{n-1}{2})^2}{(n-1)(n-2)} = \frac{1}{4} - \frac{3/4}{n-2}$.  This is larger than $\frac{3/2}{n-2}$ for $n \geq 5$.  And indeed, for $n=4$, $\rho^1$ has character ratio $0$ and $\lambda^2$ is both the lead $1$ and $2$-cycle detectors. 
\end{proof}

\subsection{Variant on Cycle Lexicographical Order}

The leading terms in difference in probabilities of two elements are then the $\rho^i$, $\rho^{n-i-1}$ terms.  The relative probabilities are determined after sufficient time by the sign of the sum of these two terms.  It has been proved that for both partitions the character ratio at the $3$-cycle are positive, so the sign of the two terms is determined entirely by \[ (\chi_{\rho^i}(\alpha) - \chi_{\rho^i}(\beta)) + (\chi_{\rho^{n-i-1}}(\alpha) - \chi_{\rho^{n-i-1}}(\beta))\].  The latter part serves to generate the parity condition that only even permuations are possible.  And $\rho^i = \lambda^{i,i-1}$, which the character differences have been previously found to be $(-1)^{i+1}(a_i - b_i)$ in Proposition \ref{ik}.  So the sign of the sum of the terms is $(-1)^{i+1}$, resulting in the order $(-1)^{i+1}CL$ restricted to even permuations.

\begin{prop}
After sufficient time, the most likely element is the identity.  The least likely is either depending on $n$ a $2-,3-,4-,$ or $5-$cycle followed by the maximal number of $2$-cycles.
\end{prop}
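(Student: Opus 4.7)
The plan is to recognize that the statement is an immediate application of the already-established likelihood order for the $3$-cycle walk together with Proposition \ref{CL orders}. The preceding discussion shows that after sufficient time the order on even permutations (the support of $P^{*t}$, since $3$-cycles are even) is exactly $(-1)^{i+1}CL$. So the most likely element is $\hat{1}_+$ and the least likely element is $\hat{0}_+$ of this order, both of which are tabulated in Proposition \ref{CL orders}.

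For the most likely element, I would observe that the identity $(1^n)$ maximizes $a_1$, and since $i=1$ is odd, under $(-1)^{i+1}CL$ more fixed points pushes an element up in the order; the identity is also even, so it is $\hat{1}_+$ regardless of the parity of $n$. For the least likely element, I would split into four cases according to the residue of $n \pmod 4$, noting that a fixed-point-free conjugacy class with the maximal number of $2$-cycles is minimal in $(-1)^{i+1}CL$, and adjusting the odd cycle added (of length $2$, $3$, $4$, or $5$) to ensure evenness. Concretely, using that a $k$-cycle is even iff $k$ is odd and a $2$-cycle is odd:
\begin{itemize}
\item if $4 \mid n$, then $(2^{n/2})$ is even and is $\hat{0}_+$;
\item if $n \equiv 2 \pmod 4$, then $(2^{n/2})$ is odd and $\hat{0}_+ = (2^{(n-4)/2}, 4)$;
\item if $n \equiv 3 \pmod 4$ (equivalently $4 \mid (n+1)$), then $(2^{(n-3)/2}, 3)$ is even and equals $\hat{0}_+$;
\item if $n \equiv 1 \pmod 4$, then $(2^{(n-3)/2}, 3)$ is odd and $\hat{0}_+ = (2^{(n-5)/2}, 5)$.
\end{itemize}

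In each case the element is of the form described in the statement: a $2$-, $3$-, $4$-, or $5$-cycle together with the maximal possible number of $2$-cycles. There is no genuine obstacle here since the heavy lifting (identifying the lead $i$-cycle detector as $\rho^i$, showing both $\rho^i$ and $\rho^{n-i-1}$ have positive character ratios at the $3$-cycle, and computing the sign of the resulting character differences) was completed in the preceding propositions; the only thing to verify is the bookkeeping of parities in the four cases above, which is immediate. Thus the proof reduces to a citation of the prior proposition together with the four-case parity check.
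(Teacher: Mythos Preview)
Your proposal is correct and follows essentially the same approach as the paper: cite that the likelihood order is $(-1)^{i+1}CL$ restricted to even permutations and then read off $\hat{1}_+$ and $\hat{0}_+$ from Proposition \ref{CL orders}. The paper simply refers to that proposition without spelling out the four residue classes of $n$ mod $4$, whereas you make the parity bookkeeping explicit.
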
 
\begin{proof}
The most likely element is the first in $(-1)^{i+1}$CL order restricted to even permutations, the least likely the last in $(-1)^{i+1}$CL order restricted to even permutations.  See Proposition \ref{CL orders}. 

\end{proof}

\subsection{Bound for Sufficient Time}

Many of the bounds from the transposition walk can be reused to briefly show that $O(n^3)$ is sufficient time for the order to hold. The argument is omitted in this version of the paper.

\section{$n$-Cycle Walk}\label{ncycle}

This walk is significantly easier than the others to analyze due to the rarity of partitions with non-zero character ratio on the $n$-cycle.  The walk mixes in two steps ~\cite{Lulov}, and the results below find the most and least likely elements after $8$ steps and show a variant of cycle lexicographic describes the relative likelihood of the conjugacy classes after $\frac{1}{2}n\log(n)$ steps.

\subsection{Character Ratio Maximizing $i$-Cycle Detector}

Explicitly since a $n$-cycle can only insert into a partition that is a single hook: 

\[\chi_{\lambda}(n) = \left\{\begin{matrix} (-1)^k & \lambda = (n-k,1,...,1) \\ 0 & \end{matrix}\right. \]
Recall the notation for $[n-k,1...,1] = \lambda^{k,k-1} = \rho^k$.  The hook length formula gives 
\[d_{\rho^k}= \frac{n!}{n(k)!(n-k-1)!} = {{n-1}\choose{k}} \]
This restricts the partitions in this section to $\rho^k$ with character ratio:
\[ \frac{\chi_{\rho^k}(n)}{d_{\rho^k}} = \frac{(-1)^k}{{{n-1}\choose{k}}} \]

The hooks that are $i$-cycle detectors are those with $h_{2,1},h_{1,2} \geq i$.  For $\rho_k$, $h_{2,1} = k$, $h_{1,2} = n-1-k$. The character ratios are symmetric up to sign about $\frac{n-1}{2}$.  Therefore the $i$-cycle detectors with largest character ratio at the $n$-cycle are $\rho^i, \rho^{n-i-1}$.  And the discrete Fourier inversion formula reduces to the equation:

\begin{prop}
Let $\alpha,\beta$ be conjugacy classes with the same number of $j$ cycles for $j<i$, then
\[P^{*t}(\alpha) - P^{*t}(\beta) = \frac{1}{n!} \sum\limits_{k=i}^{n-i-1} [\chi_{\rho^k}(\alpha) - \chi_{\rho^k}(\beta)] {{n-1}\choose{k}} \left(\frac{(-1)^k}{{{n-1} \choose {k}}}\right)^t \]
\end{prop}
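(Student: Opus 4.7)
The plan is to take the Fourier inversion difference formula of Proposition \ref{bigformula} applied to the $n$-cycle walk and prune the sum over partitions of $n$ in two successive reductions.

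First, by the Murnaghan--Nakayama rule $\chi_\lambda(n) = 0$ unless $\lambda$ is itself a single border strip of length $n$, i.e., a hook $\rho^k = [n-k,1^k]$ for some $0 \leq k \leq n-1$: the lone cycle of the class $(n)$ must be inserted as the entire shape, which forces the no-$2\times 2$-square condition. The already-recorded values $d_{\rho^k} = \binom{n-1}{k}$ and $\chi_{\rho^k}(n)/d_{\rho^k} = (-1)^k/\binom{n-1}{k}$ then supply every ingredient except the character difference.

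Second, I show the hooks outside the range $i \leq k \leq n-i-1$ contribute zero to $\chi_{\rho^k}(\alpha) - \chi_{\rho^k}(\beta)$. Direct inspection of the Ferrers diagram gives $h_{2,1} = k$ and $h_{1,2} = n-k-1$ for $\rho^k$, so Lemma \ref{ILem} says $\rho^k$ is an $i$-cycle detector precisely when $i \leq k \leq n-i-1$. For $k < i$, Corollary \ref{Icor} implies the character polynomial $q_{[\rho^k_2,\ldots,\rho^k_r]}$ contains no $x_j$ with $j \geq i$, so both $\chi_{\rho^k}(\alpha)$ and $\chi_{\rho^k}(\beta)$ are determined by $a_1,\ldots,a_{i-1}$ and $b_1,\ldots,b_{i-1}$, which coincide by hypothesis. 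For $k > n-i-1$, pass to the conjugate $(\rho^k)' = \rho^{n-k-1}$, whose subhook length $n-k-1$ is now $<i$, and combine the previous case with the identity $\chi_{\rho^k} = \mathrm{sgn}\cdot\chi_{(\rho^k)'}$.

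The only subtle point is that the $k > n-i-1$ case needs $\alpha,\beta$ to share a sign. This is not a real obstruction: at time $t$ only permutations of parity $(-1)^{t(n-1)}$ are reachable, so any $\alpha,\beta$ of opposite sign make both sides of the asserted identity vanish automatically. Assembling the surviving hooks and substituting the explicit formulas for $d_{\rho^k}$ and $\chi_{\rho^k}(n)/d_{\rho^k}$ into Proposition \ref{bigformula} produces the displayed equation. No estimation is needed; the work is entirely in verifying the two pruning steps, with the character-polynomial identification in the second step being the only non-mechanical input.
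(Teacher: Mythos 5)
Your argument is essentially the paper's: restrict the Fourier inversion sum to hooks via Murnaghan--Nakayama, compute $h_{2,1}=k$ and $h_{1,2}=n-k-1$ for $\rho^k$, and invoke the $i$-cycle-detector machinery of Section \ref{icyc} to kill the hooks with $k<i$ or $k>n-i-1$. That is exactly how the paper justifies the reduction (it gives no separate proof beyond the surrounding discussion), and the two pruning steps are carried out correctly.

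The one genuine flaw is your disposal of the opposite-sign case. It is true that the vanishing of $\chi_{\rho^k}(\alpha)-\chi_{\rho^k}(\beta)$ for $k>n-i-1$ goes through the conjugate and therefore needs $\mathrm{sgn}(\alpha)=\mathrm{sgn}(\beta)$; but your claim that for opposite-sign classes ``both sides of the asserted identity vanish automatically'' is false. The left side is $P^{*t}(\alpha)-P^{*t}(\beta)=\pm P^{*t}(\gamma)$ for whichever class $\gamma$ has the reachable parity $(-1)^{t(n-1)}$, and this is generically nonzero (the walk is supported on all of that coset after two steps). Concretely, the term $k=n-1$ (the sign representation) contributes $\frac{1}{n!}\bigl(\mathrm{sgn}(\alpha)-\mathrm{sgn}(\beta)\bigr)\left((-1)^{n-1}\right)^t\neq 0$ to the full Fourier sum, and it is omitted from the displayed right-hand side whenever $i\geq 1$; there is no cancellation to rescue the identity. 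The correct resolution is simply that the proposition carries an implicit same-parity hypothesis on $\alpha,\beta$ --- consistent with how it is applied in the rest of Section \ref{ncycle}, where every comparison is made between classes of the same sign --- rather than that the opposite-sign case is vacuous.
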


\subsection{Variation on Cycle Lexicographical Order}

The order of likelihoods of conjugacy classes is determined by the sign of the terms for $k=i,n-i-1$. The computation is now complicated by the sign of the character ratios.

\begin{prop}
The sign of the sum of the $\rho^i,\rho^{n-i+1}$ terms for the $n$-cycle walk is $(-1)^{i(t+1)+1}$.  Yielding $-$CL order at odd times and $(-1)^{i+1}$CL at even times.
\end{prop}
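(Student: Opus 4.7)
The plan is to compute the two leading terms explicitly and show they reinforce rather than cancel, with a sign one reads off as a simple parity expression in $i$ and $t$. The central observation is that $\rho^i$ and $\rho^{n-i-1}$ are conjugate partitions: $\rho^k = [n-k, 1^k]$ has arm $n-k-1$ and leg $k$, and conjugation swaps these to produce $[k+1, 1^{n-k-1}] = \rho^{n-k-1}$. I will use this to let one character difference determine the other via $\chi_\lambda(\gamma) = \operatorname{sgn}(\gamma)\chi_{\lambda'}(\gamma)$.

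First, since $\rho^i = \lambda^{i,i-1}$ in the paper's notation, I would apply Proposition~\ref{ik} with its inner index set to $i-1$ to obtain $\chi_{\rho^i}(\alpha) - \chi_{\rho^i}(\beta) = (-1)^{i-1}(a_i - b_i)$. Because $\alpha$ and $\beta$ are only reachable when they share the sign of a product of $t$ $n$-cycles, namely $(-1)^{t(n-1)}$, the conjugate identity yields $\chi_{\rho^{n-i-1}}(\alpha) - \chi_{\rho^{n-i-1}}(\beta) = (-1)^{t(n-1)+i-1}(a_i - b_i)$.

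Next, substitute into the Fourier difference formula. The $\rho^k$ term contributes $\binom{n-1}{k}(\chi_{\rho^k}(n)/d_{\rho^k})^t = (-1)^{kt}\binom{n-1}{k}^{1-t}$, and $\binom{n-1}{i} = \binom{n-1}{n-i-1}$. The $\rho^i$ piece thus carries the sign $(-1)^{i-1+it}$, while the $\rho^{n-i-1}$ piece carries $(-1)^{i-1+t(n-1)+t(n-i-1)} = (-1)^{i-1+t(2n-i-2)}$. Since $t(2n-i-2)\equiv ti\pmod 2$, both agree, giving the common sign $(-1)^{i-1+it} = (-1)^{i(t+1)+1}$, so the sum of the two terms equals $2(-1)^{i(t+1)+1}(a_i-b_i)\binom{n-1}{i}^{1-t}$. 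Reading off the two parities of $t$ then finishes the sign claim: at even $t$ the controlling sign is $(-1)^{i+1}$, giving $(-1)^{i+1}$CL, and at odd $t$ it is $-1$ uniformly in $i$, giving $-$CL.

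The hard part is just the parity bookkeeping, particularly the mod-$2$ reduction $t(2n-i-2)\equiv ti$; everything else is mechanical substitution. For these signs to actually control the likelihood order after sufficient time, a separate domination bound on the middle $\rho^k$ terms ($i<k<n-i-1$) is required, but this is routine: their prefactors are smaller than the leading ones by a factor $(\binom{n-1}{i}/\binom{n-1}{k})^{t-1}$, which becomes large well before $\tfrac12 n\log n$ steps, paralleling the analysis used in the previous sections.
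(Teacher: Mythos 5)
Your proposal is correct and follows essentially the same route as the paper: both obtain $\chi_{\rho^i}(\alpha)-\chi_{\rho^i}(\beta)=(-1)^{i+1}(a_i-b_i)$ from Proposition~\ref{ik}, pass to $\rho^{n-i-1}$ via conjugation, and reduce the combined sign to $(-1)^{i(t+1)+1}$ by the same parity bookkeeping. The only cosmetic difference is that you substitute $\operatorname{sgn}(\alpha)=(-1)^{t(n-1)}$ at the outset, whereas the paper carries the factor $\bigl(1+\operatorname{sgn}(\alpha)(-1)^{(n-1)t}\bigr)$ symbolically to the end to encode the reachability constraint.
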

\begin{proof}
Assume $\alpha$,$\beta$ are two conjugacy classes of the same sign with $a_i>b_i$ and $a_j = b_j$ for $j<i$.

\begin{align*} &[\chi_{\rho^i}(\alpha) - \chi_{\rho^i}(\beta)] {{n-1}\choose{i}} \left(\frac{(-1)^i}{{{n-1} \choose {i}}}\right)^t + [\chi_{\rho^{n-i-1}}(\alpha) - \chi_{\rho^{n-i-1}}(\beta)] {{n-1}\choose{i}} \left(\frac{(-1)^{n-i-1}}{{{n-1} \choose {i}}}\right)^t \\
&= \left([\chi_{\rho^i}(\alpha) - \chi_{\rho^i}(\beta)](-1)^{it} + [\chi_{\rho^{n-i-1}}(\alpha) - \chi_{\rho^{n-i-1}}(\beta)](-1)^{(n-i-1)t}\right) {{n-1} \choose i}^{1-t} \\
& = \left((-1)^{i+1}(-1)^{it} + (-1)^{i+1}sgn(\alpha)(-1)^{(n-i-1)t}\right) (a_i - b_i){{n-1} \choose i}^{1-t} \\
&= (-1)^{i(t+1)+1}\left(1 + sgn(\alpha)(-1)^{(n-1)t}\right) (a_i - b_i){{n-1} \choose i}^{1-t} \end{align*}

The sign of this expression is $(-1)^{i(t+1)+1}$, while the $(1 + sgn(\alpha)(-1)^{(n-1)t})$ term just determines that the walk is restricted to either odd or even conjugacy classes at various times.  When $t$ is odd, $(-1)^{i(t+1)+1} = -1$ and after sufficient time $\alpha$ will be less likely than $\beta$ if $a_i > b_i$ with $a_j = b_j$ for $j<i$.  When $t$ is even, $(-1)^{i(t+1)+1}= (-1)^{i+1}$ and after sufficient time $\alpha$ will be more(less) likely than $\beta$ if $i$ is odd(even) and $a_i > b_i$ with $a_j = b_j$ for $j<i$
\end{proof}

\begin{cor}
For $t$ sufficiently larger the order is as follows.  When $t$ is odd, the most likely element is the $n$-cycle.  When $t$ is even, its the identity.  When $t$ is odd, $n$ even, the least likely is the transposition, while for $n$ odd it is the identity.  For $t$ even the least likely element is a bit more complicated.  It is whichever of a $2$-,$3$-,$4$-,or $5$-cycle and the rest $2$ cycles is fixed point free and the appropriate sign (respectively when $4|n,4|n+1,4|n+2,4|n+3$).
\end{cor}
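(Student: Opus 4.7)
The proof is a direct bookkeeping argument: combine the preceding proposition, which pins down the likelihood order up to parity, with the extremal catalog in Proposition~\ref{CL orders}. First I would determine which parities of conjugacy classes can occur at time $t$. Since an $n$-cycle has sign $(-1)^{n-1}$, the product of $t$ random $n$-cycles has sign $(-1)^{(n-1)t}$. Thus for $n$ odd every reachable permutation is even for every $t$, while for $n$ even the reachable permutations are even at even $t$ and odd at odd $t$. The most and least likely elements are then the maximum and minimum of the appropriate order restricted to that parity subset.

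At odd $t$ the previous proposition gives $-$CL as the likelihood order. Reading $\hat{1}$ and $\hat{0}$ from the $-$CL columns of Proposition~\ref{CL orders}: for $n$ odd the maximum over even permutations is $(n)$ and the minimum is $(1^n)$; for $n$ even the maximum over odd permutations is $(n)$ and the minimum is $(1^{n-2},2)$. So in both parities an $n$-cycle is most likely at odd $t$, while the least likely is the identity when $n$ is odd and a transposition when $n$ is even, exactly as stated. At even $t$ the order is $(-1)^{i+1}$CL and the parity subset is the even permutations in both cases; the $\hat{1}_{+}$ entry of the $(-1)^{i+1}$CL column is $(1^n)$, giving the identity as the most likely element.

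The least likely element at even $t$ is $\hat{0}_{+}$ in $(-1)^{i+1}$CL over even permutations, which Proposition~\ref{CL orders} splits by the divisibility of $n$ modulo $4$. For $n$ odd this yields $(2^{(n-3)/2},3)$ when $4 \mid n+1$ and $(2^{(n-5)/2},5)$ otherwise; for $n$ even it yields $(2^{n/2})$ when $4 \mid n$ and $(2^{(n-4)/2},4)$ otherwise. Matching these four cases to the residues $4 \mid n$, $4 \mid n+1$, $4 \mid n+2$, $4 \mid n+3$ reproduces the cycle structures listed in the corollary. The only thing to be careful about is that each extremum asserted actually exists as an even permutation for the given $n$, but this is already built into the definitions of the rows in Proposition~\ref{CL orders}, so nothing further is needed; there is no substantive obstacle beyond this accounting.
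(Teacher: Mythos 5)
Your argument is correct and matches the paper's own proof: both determine the parity of reachable permutations at time $t$ (even permutations at even $t$ and for odd $n$ at odd $t$; odd permutations for even $n$ at odd $t$), apply the preceding proposition to identify the likelihood order as $-$CL at odd times and $(-1)^{i+1}$CL at even times, and then read the extremal elements off the parity-restricted rows of Proposition~\ref{CL orders}. Your case-matching of the four residues mod $4$ to the listed cycle types is exactly what the paper intends, just spelled out more explicitly.
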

\begin{proof}

At odd times, after sufficient time, the most likely element will be the first in -CL order, the least likely the last in -CL order of the same parity as $n-1$.  At even times, after sufficient time, the most likely is the first even element in $(-1)^{i+1}$CL order, the least likely the last even element in the same order. See Proposition \ref{CL orders} for a proof of what these are.    
\end{proof}

\subsection{Bound for Sufficient Time}
As before, the method is to bound the ratio of each term in the sum to the lead $i$-cycle detector.  In this case, there are two pieces to analyze since the character ratio and degree of the representation terms naturally combine. Five conclusions will be made at different time points as more of the order can be shown to hold.

\begin{prop}
Let $\alpha, \beta$ be conjugacy classes with $a_k = b_k$ for $k<i$, $a_i \neq b_i$.  
For $i = 1<j$,
\[ \left| \frac{\chi_{\rho^{i}}(\alpha) - \chi_{\rho^{i}}(\beta)}{\chi_{\rho^{j}}(\alpha) - \chi_{\rho^{j}}(\beta)} \right| \geq \left({n-1 \choose j}\right)^{-1} \]
For $2 \leq i < j$,
\[ \left| \frac{\chi_{\rho^{i}}(\alpha) - \chi_{\rho^{i}}(\beta)}{\chi_{\rho^{j}}(\alpha) - \chi_{\rho^{j}}(\beta)} \right| \geq \left({n-2i \choose j-i}\frac{(n-i)(j-i)}{i} \right)^{-1} \]
For $2 \leq i < j$, $a_k = b_k = 0$ for $k<i$,
\[ \left| \frac{\chi_{\rho^{i}}(\alpha) - \chi_{\rho^{i}}(\beta)}{\chi_{\rho^{j}}(\alpha) - \chi_{\rho^{j}}(\beta)} \right| \geq \left( (j-i){\frac{n-i}{i} \choose \frac{j}{i}} \right)^{-1} \]

\end{prop}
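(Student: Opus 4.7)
The plan is to separately bound the numerator and denominator of each ratio using Littlewood's insertion description from section \ref{CT}, exploiting that every sub-partition of a hook is itself a hook.

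Since $\rho^i = \lambda^{i,i-1}$, Proposition \ref{ik} gives directly that $\chi_{\rho^i}(\alpha) - \chi_{\rho^i}(\beta) = (-1)^{i-1}(a_i - b_i)$, so $|\chi_{\rho^i}(\alpha) - \chi_{\rho^i}(\beta)| = |a_i - b_i|$. Each of the three claimed bounds rearranges to an inequality of the form $|\chi_{\rho^j}(\alpha) - \chi_{\rho^j}(\beta)| \leq |a_i - b_i| \cdot M_{i,j}$, so the task reduces to upper-bounding the denominator.

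For this, I observe that every sub-partition of $\rho^j = [n-j,1^j]$ is again a hook, so in any Littlewood insertion building $\rho^j$ the first cycle creates a hook with a specific split between arm and leg straddling the corner $(1,1)$, and each subsequent cycle of length $c$ must be inserted either entirely as an arm extension (sign $+$) or entirely as a leg extension (sign $(-1)^{c-1}$). By the same pairing argument used in the proof of Proposition \ref{ik}, insertions in which no cycle of length $\geq i$ contributes to the leg match one-to-one between $\alpha$ and $\beta$ and cancel, so only the ``distinguishing'' insertions, in which at least one $\geq i$ cycle has boxes in the leg of total size $j$, contribute to the difference.

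For Case 1 ($i=1$), I would use the dimension bound $|\chi_{\rho^j}| \leq \binom{n-1}{j}$ together with an induction on fixed points via the branching rule $\chi_{[n-j,1^j]} \downarrow_{S_{n-1}} = \chi_{[n-j-1,1^j]} + \chi_{[n-j,1^{j-1}]}$ to absorb the factor $|a_1 - b_1|$. For Case 2, a distinguishing insertion is specified by (i) a choice of $\geq i$ cycle placed non-trivially in the leg (at most $(n-i)/i$ options, since there are at most that many such cycles), (ii) its starting row within the $j$-box leg (at most $j-i$ positions, since the cycle occupies $\geq i$ consecutive rows), and (iii) a distribution of the remaining $\leq n-2i$ boxes between the arm and the $j-i$ unused leg slots, bounded by $\binom{n-2i}{j-i}$; multiplying yields the claimed bound. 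Case 3 tightens this by using the stronger hypothesis $a_k = b_k = 0$ for $k < i$, which forces every cycle to have length $\geq i$ and hence at most $n/i$ cycles are present, reducing the distribution count to $\binom{(n-i)/i}{j/i}$ with the same position factor $j-i$. The main obstacle will be the combinatorial bookkeeping in Cases 2 and 3: the straddling first cycle contributes simultaneously to arm and leg and must be carefully distinguished from a pure leg extension so that no insertion is double-counted.
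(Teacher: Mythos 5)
Your proposal matches the paper's proof in its essentials: the numerator is $|a_i-b_i|$ via the $\rho^i=\lambda^{i,i-1}$ character-difference formula, and the denominator is bounded by counting the non-cancelling Murnaghan--Nakayama/Littlewood insertions into the hook $\rho^j$ with exactly the factors $\frac{n-i}{i}$, $j-i$, and $\binom{n-2i}{j-i}$ (resp.\ $\binom{(n-i)/i}{j/i}$ in the fixed-point-free case) that you identify. The only divergence is Case 1, where the paper does not attempt any branching-rule induction to absorb $|a_1-b_1|$ but simply takes the worst case $|\chi_{\rho^{j}}(\alpha) - \chi_{\rho^{j}}(\beta)| \leq 2d_{\rho^j} = 2\binom{n-1}{j}$ together with $|a_1-b_1|\geq 1$, accepting the extra factor of $2$ that it then carries into the subsequent lemma.
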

\begin{proof}
The calculation here is simplified from the general case since we only consider such a restricted case of partitions. In both cases, the top is bounded from below as \[\left| \chi_{\rho^{i}}(\alpha) - \chi_{\rho^{i}}(\beta) \right| = \left|(-1)^{k+1}(a_i - b_i) \right| \geq 1\].  To bound the denominator in the case $i=1$, the worst case for each term is taken: \[\left| \chi_{\rho^{j}}(\alpha) - \chi_{\rho^{j}}(\beta) \right| \leq 2d_{\rho_j}  = 2{ n-1 \choose j}\] 

When $i \geq 2$, the cancellation in the denominator resulting from trivial insertions of $i$ and larger cycles allows a slightly improved bound.  It has been assumed that $\alpha, \beta$ have identical cycle structure in cycles smaller than $i$. After trivially inserting all the $i$ and larger cycles of $\alpha,\beta$ into $\rho_j$, the remaining pieces are identical for both conjugacy classes, and so the values resulting from this cancel.  It remains to count the number of ways of inserting with at least one $i$ or larger cycle inserting non-trivially into the lower portion of $\rho_j$.    And these are chosen from at most $n-2i$ cycles remaining after the first non-trivial insertion. Then there are at most $j-i$ rows to start the trivial insertion and at most $\frac{n-i}{i}$ choices for cycle the first non-trivial insertion.

Now consider the case where $a_k=b_k=0$ for $k<i$. There are at most $j-i$ rows in which to start the trivial insertion. Under these conditions, at most $j-i$ cycles of size $<1$ and at most $\frac{j}{i}$ cycles of size $\geq i$ can fit below the first line in $\lambda_j$. And of course at most $\frac{n-i}{i}$ cycles larger than $i$ can be left after the nontrivial insertion. And inserting into $\rho^j$ there is only one way to position them. 
\end{proof}

\begin{prop}
For $i < j\leq \frac{n-1}{2}$
\[ \left| \frac{\chi_{\rho^{i}}}{d_{\rho^{i}}} \left(\frac{\chi_{\rho^{j}}}{d_{\rho^{j}}} \right)^{_1}\right|^{t-1} = \left(\frac{{n-1 \choose j}}{{n-1 \choose i}} \right)^{t-1} \geq \left( \frac{n-i-1}{j} \right)^{(j-i)(t-1)} \] 
\end{prop}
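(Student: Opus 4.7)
The plan is to first establish the equality from the explicit character ratio formula, and then reduce the inequality to a term-by-term comparison in a telescoping product expansion.

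The equality is immediate from the formula $\chi_{\rho^k}(n)/d_{\rho^k} = (-1)^k/\binom{n-1}{k}$ recorded earlier in this section: taking absolute values cancels the signs, and the ratio becomes $\binom{n-1}{j}/\binom{n-1}{i}$, which raised to the $(t-1)$ power gives the middle expression. No hypothesis on $j$ enters at this step.

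For the inequality, I would expand the binomial ratio as a product of $j-i$ factors,
\[ \frac{\binom{n-1}{j}}{\binom{n-1}{i}} = \frac{(n-1-i)(n-2-i)\cdots(n-j)}{j(j-1)\cdots(i+1)} = \prod_{k=0}^{j-i-1} \frac{n-1-i-k}{j-k}, \]
and show that each individual factor is at least $(n-1-i)/j$, the value of the first ($k=0$) factor. Cross-multiplying, $\frac{n-1-i-k}{j-k} \geq \frac{n-1-i}{j}$ reduces to $k(n-1-i) \geq kj$, equivalently $n-1-i \geq j$. Since $i < j \leq (n-1)/2$ forces $i \leq (n-3)/2$, we obtain $n-1-i \geq (n+1)/2 \geq j$, so the inequality holds for every $k$ in the product. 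Multiplying over all $j-i$ factors yields $\binom{n-1}{j}/\binom{n-1}{i} \geq \bigl((n-1-i)/j\bigr)^{j-i}$, and raising to $(t-1)$ gives the conclusion.

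There is no real obstacle: both steps are elementary manipulations. The only conceptual point worth flagging is that the hypothesis $j \leq (n-1)/2$ is precisely what is needed to make the crudest uniform bound work. Because the sequence of factors is monotonically decreasing in $k$, the smallest factor is the last one, and one loses a factor of roughly $((n-1-i)/(i+1))^{j-i}$ relative to the tight estimate; but this slack is harmless for the subsequent sufficient-time calculation, where a clean exponential lower bound in $j-i$ is all that is required.
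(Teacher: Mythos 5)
Your proof is correct and follows the same route as the paper: write $\binom{n-1}{j}/\binom{n-1}{i}$ as the product $\prod_{k=0}^{j-i-1}\frac{n-1-i-k}{j-k}$ and bound each factor below by the $k=0$ factor, which works precisely because $n-1-i\geq j$ under the hypothesis $i<j\leq\frac{n-1}{2}$. One small slip in your closing remark: these factors are monotonically \emph{increasing} in $k$ (as your own cross-multiplication shows), so the smallest factor is the first one, not the last --- which is exactly why bounding every factor by $(n-1-i)/j$ is legitimate.
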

\begin{proof}
The first equality is as calculated before.  The second follows from the ratio of $\frac{n-i-1-k}{j-k}$ increasing as $k$ increases to $j-i$ when $i,j \leq \frac{n-1}{2}$.
\end{proof}

The time bounds needed for the least/most likely elements are quite a bit smaller than the bounds needed for the general order to hold. 

\begin{lem}
Let $\alpha, \beta$ be two conjugacy classes of the same parity with $a_1 > b_1$ so that $i=1$.  Then for $t \geq 4$, $n \geq 18$, the order holds and $\alpha$ is more likely at even times, $\beta$ at odd times.
\end{lem}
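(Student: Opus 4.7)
The plan is to apply the difference formula of the preceding proposition with $i=1$ and show that the two terms indexed by $k=1$ and $k=n-2$ already dominate the sum by $t=4$. Isolating these two terms: since $\chi_{\rho^1}(\alpha)=a_1-1$, the $k=1$ contribution is $(a_1-b_1)(-1)^t(n-1)^{1-t}$, and since $\rho^{n-2}=(\rho^1)'$ with $\operatorname{sgn}(\alpha)=\operatorname{sgn}(\beta)$, the $k=n-2$ contribution combines with it to give
\[ (a_1-b_1)(n-1)^{1-t}\bigl[(-1)^t+\operatorname{sgn}(\alpha)(-1)^{(n-2)t}\bigr]. \]
The bracket equals $2(-1)^t$ on the conjugacy classes reachable in $t$ steps and vanishes otherwise, fixing the sign of the leading contribution as $(-1)^t$. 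This gives $\alpha$ more likely at even $t$ and $\beta$ more likely at odd $t$, exactly as claimed.

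For the tail $2\leq k\leq n-3$, the character-difference bound from the preceding proposition reads $|\chi_{\rho^k}(\alpha)-\chi_{\rho^k}(\beta)|\leq (a_1-b_1)\binom{n-1}{k}$, so the $k$th term of the sum is bounded in magnitude by $(a_1-b_1)\binom{n-1}{k}^{2-t}$. For $t\geq 4$ we have $2-t\leq -2$, so $\binom{n-1}{k}^{2-t}$ is largest at the endpoints $k=2,n-3$ and decays rapidly toward the middle of the range. Comparing the tail bound to the leading magnitude $2(a_1-b_1)(n-1)^{1-t}$ yields a tail-to-lead ratio of at most
\[ \frac{(n-1)^{t-1}}{2}\sum_{k=2}^{n-3}\binom{n-1}{k}^{2-t}. \]
For $t=4$ this is driven by the endpoint contribution $\frac{4(n-1)}{(n-2)^2}$, with a much smaller correction from $\binom{n-1}{3}^{-2}$ and the exponentially decaying central terms; the binding case of the whole argument is showing this ratio is less than $1$.

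The main obstacle is extracting the clean threshold $n\geq 18$ from this $t=4$ estimate. The endpoint contribution alone requires $n^2-8n+8>0$, which is easy, but the middle of the sum must also be bounded uniformly; $n\geq 18$ is comfortably enough slack to absorb the $\binom{n-1}{3}^{-2}$ term and the tail of smaller terms past it. For $t>4$ every tail term carries an extra factor of $\binom{n-1}{k}^{-1}\leq\binom{n-1}{2}^{-1}$, so the comparison only tightens with $t$ and no additional work is needed there.
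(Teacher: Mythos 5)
Your argument is correct and is essentially the paper's: both isolate the $\rho^1$ and $\rho^{n-2}$ terms to read off the sign $(-1)^t$, and then dominate each remaining $\rho^k$ term via the same three-factor bound $\left|\chi_{\rho^k}(\alpha)-\chi_{\rho^k}(\beta)\right|{n-1 \choose k}^{1-t} \leq (a_1-b_1){n-1 \choose k}^{2-t}$ measured against the lead magnitude $2(a_1-b_1)(n-1)^{1-t}$, with the binding case at $k=2$. The only difference is bookkeeping: the paper allots a geometric budget $2^{1-j}$ to the $j$-th tail term and checks the endpoint cases $j=2$ and $j=\frac{n-1}{2}$, whereas you sum the tail directly, which works out to an (if anything) slightly less stringent requirement on $n$.
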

\begin{proof}
This will follow if the ratio of the $\rho^j$ term to the $\rho^1$ term is smaller than $2^{j-1}$ for $t \geq 4$.

\begin{align*} & 2^{1-j}\left| \frac{\chi_{\rho^{1}}(\alpha) - \chi_{\rho^{1}}(\beta)}{\chi_{\rho^{j}}(\alpha) - \chi_{\rho^{j}}(\beta)} \right|\left| \frac{\chi_{\rho^{1}}(n)}{d_{\rho^{1}}} \left(\frac{\chi_{\rho^{j}(n)}}{d_{\rho^{j}}} \right)^{-1}\right|^{t-1} \\
&\geq 2^{1-j} \left(2{n-1 \choose j}\right)^{-1} \left(\frac{{n-1 \choose j}}{{n-1 \choose 1}} \right)^{t-1} \\
& = \frac{{{n-1 \choose j}}^{t-2}}{2^{j}(n-1)^{t-1}} \\
& \geq  \frac{{{n-1 \choose j}}^{2}}{2^{j}(n-1)^{3}} \\
\end{align*}
Letting $j$ vary, the expression is increasing when $j \geq \frac{n}{3} -1$ so its enough to find $n$ sufficiently large that the expression is larger than $1$ for $j=2, \frac{n-1}{2}$.
\[ \frac{{n-1 \choose 2}^2}{2^2(n-1)^3} = \frac{(n-2)^2}{16(n-1)} = \frac{n-2}{16}\left( 1- \frac{1}{n-1} \right) \geq 1 \text{ when } n\geq 18 \]
\[ \frac{{n-1 \choose \frac{n-1}{2}}^2}{2^{\frac{n-1}{2}}(n-1)^3} = \frac{(n-1)!!}{\frac{n-2}{2}!}\frac{{n-1 \choose \frac{n-2}{2}}}{(n-1)^3} \geq 1 \text{ when } n \geq 18 \]
\end{proof}

\begin{lem}
Let $\alpha, \beta$ be two conjugacy classes of the same parity with $a_j = b_j = 0$ for $j < i$, $a_i > b_i$.  If $2\leq i \leq n/3$ the order holds for $t \geq 8$.  If $i > n/3$ the order holds at all times.
\end{lem}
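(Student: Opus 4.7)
The plan is to bound each non-leading summand in the Fourier expansion
\[P^{*t}(\alpha) - P^{*t}(\beta) = \frac{1}{n!}\sum_{k=i}^{n-i-1}[\chi_{\rho^k}(\alpha)-\chi_{\rho^k}(\beta)]\binom{n-1}{k}\left(\frac{(-1)^k}{\binom{n-1}{k}}\right)^t\]
relative to the leading $\rho^i$ summand (and its conjugate $\rho^{n-i-1}$) whose joint sign has already been pinned down. The symmetry $\rho^{n-k-1}=(\rho^k)'$ restricts attention to $i < j \leq (n-1)/2$, and I will use the geometric scaling $c_j = 2^{-(j-i+1)}$ so the tail sums to a constant fraction of the leading term. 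Inserting the restricted-case character-difference estimate from the previous proposition, the formula $d_{\rho^k}=\binom{n-1}{k}$, and the character ratio $\chi_{\rho^k}(n)/d_{\rho^k}=(-1)^k/\binom{n-1}{k}$, the sufficient inequality becomes
\[\left(\frac{\binom{n-1}{j}}{\binom{n-1}{i}}\right)^{t-1} > 2^{j-i+1}\,(j-i)\,\binom{(n-i)/i}{j/i}\]
for each $i < j \leq (n-1)/2$.

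For $2 \leq i \leq n/3$, I would take logarithms using the previously established bound $\binom{n-1}{j}/\binom{n-1}{i} \geq ((n-i-1)/j)^{j-i}$ and the crude $\binom{(n-i)/i}{j/i} \leq (e(n-i)/j)^{j/i}$, reducing the requirement to
\[(t-1)(j-i)\log\frac{n-i-1}{j} \geq (j-i+1)\log 2 + \log(j-i) + \frac{j}{i}\log\frac{e(n-i)}{j}.\]
The right-hand side is of order $(j/i)\log(n/j)$ while the left is $(t-1)(j-i)\log(n/j)$, and because $i \leq n/3$ keeps $j-i$ within a bounded multiple of $j/i$, setting $t = 8$ gives the inequality with room to spare. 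Summing $c_j$ over $j$ then gives a geometric series of total mass less than one, absorbing the tail.

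For $i > n/3$, the hypothesis that every cycle in both $\alpha$ and $\beta$ has length at least $i$ together with $n/i < 3$ forces each class to have at most two parts; the only possibilities are $(n)$, $(n-i,i)$ when $n > 2i$, $(i,i)$ when $n = 2i$, and two-part shapes $(\alpha_1,\alpha_2)$ with $\alpha_1,\alpha_2 > i$. The range $k \in [i, n-i-1]$ has width less than $n/3$, and the two indices $i,j$ both lie near $n/2$, so $\binom{n-1}{j}/\binom{n-1}{i}$ is already exponentially large in $n$ and the key inequality is comfortably satisfied for any $t \geq 2$; at $t=1$ only the $n$-cycle class has nonzero probability and the finitely many remaining comparisons can be checked by hand. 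The main obstacle I foresee is the boundary case $j = i+1$ in the $i \leq n/3$ regime, where the binomial gap is smallest and the generic character-difference bound is weakest, especially when $i$ does not divide $j$ so that $\binom{(n-i)/i}{j/i}$ needs reinterpretation; there I expect to need a direct Murnaghan-Nakayama insertion count into $\rho^{i+1}$ to shave a constant factor and push the threshold down to exactly $t=8$.
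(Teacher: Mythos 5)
Your treatment of the range $2\le i\le n/3$ is essentially the paper's argument: term-by-term domination of each $\rho^j$ summand by the $\rho^i$ summand (with its conjugate $\rho^{n-i-1}$ handled by symmetry), using the restricted character-difference bound $\bigl((j-i)\binom{(n-i)/i}{j/i}\bigr)^{-1}$, the dimension-ratio bound $\binom{n-1}{j}/\binom{n-1}{i}\ge((n-i-1)/j)^{j-i}$, and geometric weights $c_j$. The logarithmic estimate you sketch does close for $t\ge 8$ (the paper's own computation lands around $t\ge 6$), so that half is fine modulo writing out the corner cases you already flagged.

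The case $i>n/3$ contains a genuine gap. You correctly observe that both classes have at most two parts, but you then try to run the same domination argument, asserting that $\binom{n-1}{j}/\binom{n-1}{i}$ is ``exponentially large in $n$'' so that $t\ge 2$ suffices. That is false: $\binom{n-1}{j}/\binom{n-1}{j-1}=(n-j)/j$, which tends to $1$ as $j\to(n-1)/2$, so when $i$ is near $(n-1)/2$ and $j=i+1$ the left side of your key inequality is $\bigl(\tfrac{n+1}{n-1}\bigr)^{t-1}\approx 1$ for any fixed $t$, while the right side is at least $2^{2}\cdot 1\cdot\binom{(n-i)/i}{(i+1)/i}\ge 4$. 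No constant $t$ rescues this (one would need $t$ of order $n\log n$, which is exactly what the paper's \emph{next} lemma requires for general classes with $i>n/3$), and in any event a domination argument valid for $t\ge 2$ plus a hand-check at $t=1$ cannot yield ``at all times.'' The mechanism you are missing is vanishing, not domination: by Murnaghan--Nakayama, a two-row class satisfies
\[
\chi_{\rho^k}\bigl((n-a,a)\bigr)=(-1)^k+(-1)^{a-1}(-1)^{k-a}=0\quad\text{for }a\le k\le n-a-1,
\]
and equals $(-1)^k$ for $k<a$. Since $\alpha,\beta$ are two-part classes of the same parity and only one of them can have a part of size exactly $i$, the paper shows that the competing differences $\chi_{\rho^j}(\alpha)-\chi_{\rho^j}(\beta)$ for $j$ strictly between $i$ and $n-i-1$ vanish, so the $\rho^i$ and $\rho^{n-i-1}$ terms are the only nonzero ones and their sign decides the order at every $t$ with no estimate needed. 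Your write-up records the two-part structure but never extracts this cancellation, and without it the stated conclusion for $i>n/3$ does not follow from your approach.
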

\begin{proof}
The only conjugacy classes of this form with $i > n/3$ are partitions with smallest piece of size greater than $n/3$, which means they have at most two pieces.  Since $[n]$ and $[n-a,a]$ have opposite parities, these cannot be reached at the same parity of time. A partition with only two parts and smallest part $a$ can only insert in at most two ways into $\rho^k$.

 \[ \chi_{\rho^k([n-a,a])} =\left\{\begin{array}{l l} (-1)^{k} + (-1)^{a+1}(-1)^{k-a} = 0 & \textrm{ when $a \leq k \leq \frac{n-1}{2}$} \\
																							(-1)^k & \textrm{ when $a > k$}	\end{array} \right. \]

Only one of $\alpha, \beta$ can have a part of size $i$, so this must be $\alpha$.  And $\chi_{\rho^j}(\alpha) = 0$ for $j \geq i$, while $\chi_{\rho^i}(\beta) = (-1)^i$,  $\chi_{\rho^j}(\beta) = 0$ for $j > i$.  This means that the only $\rho^i$, $\rho^{n-1-i}$ terms are nonzero, hence their sign determines the order at all times.

In the case that $i \leq n/3$, recall that the bounds of the ratio of the difference of characters was significantly improved. It remains to show the following for $t \geq 5$, $i \leq n/3$, $\frac{n-1}{2} \geq j \geq i$:

\[2^{i-j}\left| [\chi_{\rho^{i}}(\alpha) - \chi_{\rho^{i}}(\beta)] {{n-1}\choose i}\left(\frac{(-1)^i}{{{n-1} \choose {i}}}\right)^t \right| > \left| [\chi_{\rho^{j}}(\alpha) - \chi_{\rho^{j}}(\beta)] {{n-1}\choose{j}} \left(\frac{(-1)^j}{{{n-1} \choose {j}}}\right)^t \right| \]

Dividing this becomes:

\[ 2^{i-j}\left|\frac{\chi_{\rho^{i}}(\alpha) - \chi_{\rho^{i}}(\beta)}{\chi_{\rho^{j}}(\alpha) - \chi_{\rho^{j}}(\beta)} \right|\left(\frac{{{n-1}\choose j}}{{{n-1}\choose i}}\right)^{t-1} \geq 1 \]

\begin{align*} & 2^{i-j}\left|\frac{\chi_{\rho^{i}}(\alpha) - \chi_{\rho^{i}}(\beta)}{\chi_{\rho^{j}}(\alpha) - \chi_{\rho^{j}}(\beta)} \right|\left(\frac{{{n-1}\choose j}}{{{n-1}\choose i}}\right)^{t-1} \\ 
&\geq 2^{i-j}(j-i)\left(\frac{(n-i-1)_{j-i}}{(j)_{j-i}}\right)^{t-2} \\
& \geq 2^{i-j}(j-i) \left(\frac{(2n/3 -1)_{j-i}}{((n-1)/2)_{j-i}} \right)^{t-2} \\
& \geq 2^{i-j}(j-i) \left(\frac{4}{3} \right)^{(j-i)(t-2)} \\
& \geq \frac{4}{3}^{(j-i)(t-2) -\log(2)(j-i) -\ln(j-i)} \\
&\geq 1 \text{ when $t \geq 6$} \end{align*}
\end{proof}

\begin{lem}
For $\alpha, \beta$ two conjugacy classes of the same parity with $a_j = b_j$ for $j < i$, $a_i > b_i$.  If $i \leq n/3$ the order holds when $t \geq \log(n) + 3$.  For $i > n/3$ the order holds when $t \geq \frac{n-1}{2}\log(n) + 3$.
\end{lem}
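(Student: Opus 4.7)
The plan is to show that the $\rho^i$ and $\rho^{n-1-i}$ terms in the Fourier expansion dominate all others in magnitude, so that the sign of $P^{*t}(\alpha)-P^{*t}(\beta)$ is determined by the leading pair as computed in the previous subsection. Exploiting the identity $|\chi_{\rho^{k}}(n)/d_{\rho^k}|=|\chi_{\rho^{n-1-k}}(n)/d_{\rho^{n-1-k}}|$ and the analogous pairing of character differences on conjugate hook partitions, it suffices to control the sum over $i<k\leq (n-1)/2$ and double the bound to absorb the conjugate indices. I will take scaling constants $c_k=2^{-(k-i+2)}$, so that $\sum_{k>i}c_k\leq 1/4$, leaving room for the symmetric conjugate contributions.

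Applying the two propositions immediately preceding the lemma, the target inequality $|\text{term}_k|\leq c_k|\text{term}_i|$ reduces, for $i\geq 2$, to
\[ c_k^{-1}\cdot\binom{n-2i}{k-i}\frac{(n-i)(k-i)}{i}\cdot\left(\frac{k}{n-i-1}\right)^{(k-i)(t-1)}\leq 1, \]
with the $i=1$ case replacing the combinatorial character-difference factor by the weaker bound $\binom{n-1}{k}$. In the regime $i\leq n/3$ we have $(n-i-1)/k\geq 4/3$ for every $k\leq (n-1)/2$ and $n$ sufficiently large, so each step of the walk contributes a constant-factor decay. Using $\binom{n-2i}{k-i}\leq(e(n-2i)/(k-i))^{k-i}$ and taking logarithms, the inequality reduces, after dividing through by $k-i$, to $(t-1)\log(4/3)\geq\log n+O(1)$, which is implied by $t\geq\log n+3$.

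For $i>n/3$ the ratio $(n-i-1)/k$ can be as close to $1$ as $1+2/(n-1)$ (attained when $i=(n-3)/2$ and $k=(n-1)/2$), and this is the main obstacle of the proof: the character ratios of the intermediate hook partitions $\rho^k$ differ from that of $\rho^i$ by only $O(1/n)$, so the per-step decay is a factor of $1+O(1/n)$ rather than a bounded constant. Using $\log(1+x)\geq x/2$ on $(0,1]$, the exponent $(k-i)(t-1)\log((n-i-1)/k)$ is bounded below by a quantity of order $(t-1)/n$ in the worst case. Since the character-difference and scaling factors contribute $O(\log n)$ per $k$ to the logarithm of the right-hand side, the requirement becomes $t-1$ of order $n\log n$, which is absorbed by the stated bound $t\geq\frac{n-1}{2}\log n+3$. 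The remaining bookkeeping — handling $i=1$ via the weaker character-difference ratio, verifying small-$n$ edge cases, and treating both $k\leq(n-1)/2$ and $k>(n-1)/2$ by symmetry — proceeds along the lines of the earlier lemmas for the transposition and three-cycle walks.
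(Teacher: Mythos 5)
Your proposal follows essentially the same route as the paper's proof: bound each $\rho^k$ term against the $\rho^i$ term using the two preceding propositions (character-difference ratio and the power of the character-ratio quotient), exploit that $(n-i-1)/k \geq 4/3$ for $i \leq n/3$ to get constant-factor per-step decay, identify the worst case $1+2/(n-1)$ for $i>n/3$ as the bottleneck forcing $t$ of order $n\log n$, and fold in the conjugate hooks $\rho^{n-1-k}$ by symmetry. The one caveat is that your concluding step ``$(t-1)\log(4/3)\geq \log n + O(1)$ is implied by $t\geq \log n+3$'' only holds if $\log$ is read in base $4/3$, but the paper's own proof carries the identical discrepancy (it actually derives $t\geq \tfrac{7}{2}\log(n)+2$ against the stated $\log(n)+3$), so this is not a gap relative to the paper.
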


\begin{proof}
Below will be needed the relation that $\frac{(n-i-1)_{j-i}}{(j)_{j-i}} { n-2i \choose j-i}^{-1} \leq \left(\frac{1}{i+1} \right)^{j-i}$ Like before:

\begin{align*} & 2^{i-j}\left|\frac{\chi_{\rho^{i}}(\alpha) - \chi_{\rho^{i}}(\beta)}{\chi_{\rho^{j}}(\alpha) - \chi_{\rho^{j}}(\beta)} \right|\frac{{{n-1}\choose j}}{{{n-1}\choose i}}^{t-1} \\
& \geq  2^{i-j}\left(\frac{(n-i-1)_{j-i}}{(j)_{j-i}}\right)^{t-1} \left({n-2i \choose j-i}(j-i) \right)^{-1} \\
& \geq 2^{i-j-1}\left(\frac{(n-i-1)_{j-i}}{(j)_{j-i}} \right)^{t-1}  {n-2i \choose j-i}^{-1} \\ 
& \geq 2^{i-j}(i+1)^{i-j}\left(\frac{(n-i-1)_{j-i}}{(j)_{j-i}} \right)^{t-2}\\
& \geq 2^{i-j}(i+1)^{i-j}\left(1 + \frac{n-i-j-1}{j}\right)^{(j-i)(t-2)} \\
& \geq \left(\frac{(1 + \frac{n-i-j-1}{j})^{t-2}}{2(i+1)} \right)^{j-i}
\end{align*}

So it remains to find $t-2$ such that $(1 + \frac{n-i-j-1}{j})^{t-3} \geq 2(i+1)$.  Assuming that $i \leq n/3$:

\begin{align*} (1 + \frac{n-i-j-1}{j})^{t-2} &\geq (1+ \frac{1 - n/3 - (n-1)/2 -1}{(n-1)/2})^{t-2} \\
& \geq(1 + \frac{n/6 - 1/2}{n/2 - 1/2})^{t-3} \\
& \geq(\frac{4}{3})^{t-3} \\
& \geq 2(n/3+1) \geq 2(i+1) \text{ when $t \geq \frac{7}{2}\log(n) + 2$} \end{align*}

And only assuming that $i \leq \frac{n-3}{2}, j \leq \frac{n-1}{2}$,

\begin{align*} (1 + \frac{n-i-j-1}{j})^{t-2} & \geq ( 1 + \frac{1}{(n-1)/2})^{t-2} \\
& \geq 2(\frac{n-3}{2} + 1) \text{ when } t \geq \frac{4}{3}n \log{2(n-3)} \end{align*}

\end{proof}


\section{Other Remarks}

Many other random walks generated by a conjugacy class fail in various ways to have such nice likelihood orders after sufficient time.  There may not be a unique largest $i$-cycle detector or various values of $i$ may share a single partition.  Or indeed, the largest $i$-cycle detector may be blind to $i$-cycles as $[n-3,2,1]$ is to $2$-cycles.  When the lead $i$-cycle detector fails to differentiate conjugacy classes, it falls to the next largest $i$-cycle detector and so on.  Another issue is in adding a lazy component to a walk the likelihood order may change decidedly when for some $i$ the largest $i$-cycle detectors all have negative character ratios.  $[n-i,i],[n-i,1^i]$ are particularly nice in these regards, though the latter fails to be nice for lazy walks.  These failures can lead to partial or partially describable likelihood orders rather than total ones.  

For example, in the case of the random walk generated by $(n-1)$-cycles, the character ratio is zero except at $\lambda = [n],[1^n],[n-i,2,1^{i-2}]$.  Where $[n-i,2,1^{i}]$ is a $i$-cycle detector.  $[n-2,2]$ doubles as the lead $1$ and $2$-cycle detector where $\chi_{[n-2,2]}(\alpha) = a_2 + {a_1 \choose 2} - a_1$.  And the sign of $a_i$ in $\chi_{[n-i,2,i]}(\alpha)$ is $(-1)^i$.  The order after sufficient time is first based on $(a_2-b_2) + \frac{(a_1-b_1)(a_1+b_1-3)}{2} > 0$, but if this does not resolve the $1$,$2$-cycle detection, it falls to the $3$-cycle detector now the lead $1,2,3$-cycle detector to resolve the dispute and so on.  In the case that both conjugacy classes have no $1$ or $2$-cycles at even times it becomes just $(-1)^iCL$ order.

 \bibliography{LikelihoodOrdersSym}
 \bibliographystyle{plain}
 \end{document}